\documentclass{article}

\usepackage[T1]{fontenc} 
\usepackage[english]{babel}

\usepackage{mathtools,amssymb} 
\usepackage{mathrsfs} 
\usepackage{bbm} 
\usepackage{dsfont} 
\usepackage[only,llbracket,rrbracket]{stmaryrd}

\usepackage[a4paper,top=2cm,bottom=2cm,left=3cm,right=3cm,marginparwidth=1.75cm]{geometry}

\usepackage{graphicx}
\usepackage{tikz}

\usepackage[colorlinks = true,
                linkcolor = red,
                urlcolor  = blue,
                citecolor = red,
                anchorcolor = magenta]{hyperref}
\usepackage[amsmath,thmmarks,hyperref]{ntheorem} 
\usepackage[nameinlink]{cleveref} 

\usepackage{tcolorbox}
\usepackage[shortlabels]{enumitem}

\usepackage[sortcites=true]{biblatex} 
\usepackage{csquotes}
\bibliography{biblio.bib}

\AtEveryBibitem{
\clearfield{url}
\clearfield{urldate}
\clearfield{urlyear}
}

\newcommand{\R}{\mathbb R}

\newcommand{\Z}{\mathbb Z}
\newcommand{\N}{\mathbb N}

\newcommand{\1}{\mathbb 1}

\newcommand{\lllbracket}{ [\![ }

\newcommand{\dd}{\mathrm{d}}

\renewcommand{\P}{\mathbb P}
\newcommand{\E}{\mathbb E}

\newcommand{\gen}{\mathrm{Gen}}

\newcommand{\norm}{\mathrm{Norm}}

\newcommand{\bcp}{\mathrm{BCap}}
\newcommand{\esc}{\mathrm{Esc}}

\newcommand{\Tree}{\mathcal T}

\renewcommand{\1}{\mathds 1}
\renewcommand{\lllbracket}{\llbracket}

\providecommand{\tightlist}{%
  \setlength{\itemsep}{0pt}\setlength{\parskip}{0pt}} 

\theoremstyle{break}

\newtheorem{definition}{Definition}[section]
\newtheorem{lemma}[definition]{Lemma}
\newtheorem{proposition}[definition]{Proposition}
\newtheorem{theorem}[definition]{Theorem}
\newtheorem{corollary}[definition]{Corollary}
\newtheorem{remark}[definition]{Remark}

\newtheorem{assumption}{Assumption}

\newtheorem{claim}[definition]{Claim}

\theoremstyle{empty}
\theoremheaderfont{\bfseries}
\theorembodyfont{\normalfont}
\theoremseparator{}
\theoremsymbol{\(\blacksquare\)}
\newtheorem{proof}{Proof}

\AddToHook{env/lemma/begin}{\crefalias{definition}{lemma}}
\AddToHook{env/proposition/begin}{\crefalias{definition}{proposition}}
\AddToHook{env/theorem/begin}{\crefalias{definition}{theorem}}
\AddToHook{env/corollary/begin}{\crefalias{definition}{corollary}}
\AddToHook{env/remark/begin}{\crefalias{definition}{remark}}
\AddToHook{env/claim/begin}{\crefalias{definition}{claim}}

\providecommand{\tightlist}{%
  \setlength{\itemsep}{0pt}\setlength{\parskip}{0pt}}

\usepackage{bookmark}
\IfFileExists{xurl.sty}{\usepackage{xurl}}{} 
\usepackage{tabularx}
\usepackage{booktabs}
\usepackage[labelfont=bf,format=plain,justification=raggedright,singlelinecheck=false]{caption}

\title{Recurrence and capacity of stable branching random walks}
\author{Antoine Aurillard\footnote{CMAP, École polytechnique \& ICJ, Université Lyon 1, France. \textit{email:} antoine.aurillard@polytechnique.edu}}
\date{}

\begin{document}
\maketitle

\begin{abstract}
    We study the linear growth rate of the range of size-conditioned Branching Random Walks (BRW) when the offspring distribution $\mu$ is critical and attracted to an $\alpha$-stable law. This is done via the infinite invariant BRW introduced by Le Gall \& Lin and a new criterion which relates this growth rate of the range to a notion of dimension of the underlying tree in a general way. Then, in the transient case (that is, when the range does grow linearly), we extend the notion of branching capacity to this $\alpha$-stable case. We show that it is still related to the asymptotic probability that a BRW (or its infinite version) reaches a distant set in $\Z^d$, and we estimate the $\alpha$-stable branching capacity of balls.
\end{abstract}

\section{Introduction}\label{introduction}

Consider a centered tree-indexed in \(\Z^{d}\). To do so, start with either a Bienaymé tree \(T=\Tree\) with a critical offspring distribution \(\mu\) or its size-conditioned version \(T=\Tree_{n}\), which is a \(\mu\)-Bienaymé tree conditioned to have \(n+1\) vertices. Then, assign independently to each edge of $T$ a jump in $\Z^d$ sampled according to a centered distribution \(\theta\). The resulting walk indexed by $T$, simply called a $T$-walk in the rest of this paper, is obtained by declaring that the root of $T$ is at $0 \in \Z^d$ while every other $u \in T$ is at the position of its parent plus the jump from this parent to $u$. Note that in the case $T=\Tree$, we recover the classical model of critical Branching Random Walk (BRW). The goal of this paper is to understand how the global shape of $T$ affects the spread of the corresponding $T$-walk. More precisely, we focus on the probability to reach a distant set of positions in \(\Z^{d}\) and on the range of those walks, \emph{i.e.} the number of positions \(x \in \Z^{d}\) reached by the walks and denoted by \(R^{\theta}(T)\).

This question on tree-indexed walks has mainly been studied when \(\mu\), in
addition of having mean \(1\), has a finite variance
\(\sigma_{\mu}^2 < +\infty\). In particular, Le Gall \& Lin \autocite{LeGallLin2016RangeTreeindexedRandom} studied the linear or sublinear growth rate of $R^{\theta}(\Tree_n)$ by relating it to the transience or recurrence of an infinite invariant version of a $\Tree$-walk. However, their results are partial in the broader setting where \(\mu\) may have an infinite variance but is in the domain of
attraction of an \(\alpha\)-stable distribution,\footnote{We denote this by $\mu \in \mathrm{dom}(\alpha)$ and provide some background in \Cref{offspring-attracted-to-an-alpha-stable-distribution}} while the underlying trees \(\Tree\) and \(\Tree_{n}\) are still well understood in this context. Indeed, it is known that the large scale geometry of \(\Tree\) and
\(\Tree_{n}\) primarily depends on the parameter \(\alpha \in [1,2]\), in
particular they are \(\frac{\alpha}{\alpha-1}\)-dimensional in the sense
of Maillard \autocite{Maillard2016MaximumTreeindexedRandom}.

In this paper, we study the impact of the geometric characteristics of Bienaymé trees in an $\alpha$-stable domain on the behaviour of the corresponding tree-indexed walks. Concretely, we first complete the
previous work of Le Gall \& Lin \autocite{LeGallLin2016RangeTreeindexedRandom} in the setting of a critical offspring distribution $\mu \in \mathrm{dom}(\alpha)$. We use an explicit connection between the
dimension of \(\Tree\) and the range of a \(\Tree\)-walk to show that the
recurrence or transience of the infinite walk in \(\Z^{d}\) (almost) only depends
on this dimension \(\frac{\alpha}{\alpha-1}\). More precisely, this walk is
recurrent whenever \(d < 2 \frac{\alpha}{\alpha-1}\) and transient
whenever \(d > 2 \frac{\alpha}{\alpha-1}\). We also briefly
discuss some new behaviour appearing at the critical dimension
\(d = 2\frac{\alpha}{\alpha-1}\), as well as the limiting case
\(\alpha=1\). Those results are actually based on a more general one, whose main ingredient comes from
\autocite{Maillard2016MaximumTreeindexedRandom}: given any tree-indexed random walk where the underlying tree is $D$-dimensional in some sense specified later, the critical dimension separating recurrent
and transient behaviour is at least \(2D\).

In a second part, we extend
the notion of branching capacity for transient branching random walk
introduced by Zhu \autocite{Zhu2017CriticalBranchingRandom} in the finite
variance case. Interestingly, in the $\alpha$-stable case the geometry of the
underlying tree seems to have little influence on the probability that a
\(\Tree\)-walk starting from \(x \in \Z^{d}\) far away from the origin
reaches some fixed finite set \(A\). Indeed, whatever is \(\alpha\) this
probability is actually comparable to the same one as for a classical
random walk. However, when \(A\) gets large as well, the geometric
properties of \(\Tree\) come back to play a role into the asymptotic
probability of reaching \(A\).

Let us fix once and for all our main assumptions on the offspring distribution $\mu$ and the jump distribution $\theta$.

\begin{assumption}
  \begin{itemize}
    \item $\mu$ is critical, \textit{i.e.} $\sum_{k \in \N} k\mu(k)=1$ and $\mu \neq \delta_1$. 
    
    \item $\mu \in \mathrm{dom}(\alpha)$ for $\alpha \in [1,2]$. Actually, we will explicitly consider the case $\alpha=1$ in \Cref{cor:RecCauchy} but apart from this, we always assume that $\alpha >1$. A critical $\mu \in \mathrm{dom}(\alpha)$ where $\alpha \in (1,2]$ is characterized\footnote{see \Cref{annexe-regularly-varying-functions} for a reminder on attraction to a stable distribution and regularly varying functions.} by the existence of a slowly varying function \(L\) such that the generating function \(\gen_{\mu}\) of \(\mu\) satisfies
\begin{equation}\label{eq:GenMuStable}{
 \gen_{\mu}(1-s)-(1-s) = s^{\alpha}L\left( \frac{1}{s} \right).
}\end{equation}
In particular, the finite variance setting is included in the case \(\alpha=2\).

    \item $\theta$ is centered in $\Z^d$ and has an invertible covariance matrix $\Sigma_{\theta}^{2}$.

  \end{itemize}
\end{assumption}

\subsection{Recurrence and transience of stable BRW}\label{recurrence-and-transience-of-stable-brw}

\subsubsection{The invariant branching random walk}\label{the-infinite-walk-associated-to-branching-random-walk}

Le Gall \& Lin \autocite{LeGallLin2016RangeTreeindexedRandom} initiated the study of
the range of a finite but large branching random walk and defined an
important tool: they introduced an infinite tree, denoted here as
\(\Tree_{\mathrm{inv}}\), which enjoys an invariance property. This
enables to study a \(\Tree_{\mathrm{inv}}\)-indexed walk with an
approach based on ergodic theory already standard for classical random
walk. At the same time \(\Tree_{\mathrm{inv}}\) is closely linked to
\(\Tree\) and \(\Tree_{n}\). Thanks to this, the
results concerning \(\Tree_{\mathrm{inv}}\)-indexed walk have been transfered to
\(\Tree_{n}\)-walk.

In later works, \textit{e.g.} Zhu  \autocite{Zhu2017CriticalBranchingRandom} and
Asselah, Schapira \& Sousi \autocite{AsselahSchapiraSousi2025LocalTimesCapacity}, slightly
different versions of this invariant tree have been used. Here we shall work
with the one constructed as a pointed local limit of finite but large
\(\mu\)-Bienaymé trees, which we denote by \(\Tree_{\infty}\). This tree
will be properly introduced in \Cref{invariant-trees-and-walks}, but
informally speaking one may construct it  as follows. First pick a vertex uniformly
at random in \(\Tree_{n}\). We will focus on the neighbourhood of this point, so we call it the \textit{cursor} and denote it by $c_0$. Then take \(n\) large, and observe that the distance between the root $\varnothing$ and the cursor $c_0$ tends to $+\infty$ almost surely. Seen from the cursor $c_0$, the root \(\varnothing\) is sent at infinity and the genealogical path
from \(\varnothing\) to \(c_0\) is stretched into a semi-infinite path
\(\ldots \to c_{-2} \to c_{-1} \to c_{0}\) called the spine. Thus the limit tree $\Tree_{\infty}$ contains this spine and some finite trees grafted to it. This tree
\(\Tree_{\infty}\) is to \(\Tree_{\mathrm{inv}}\) what a bi-infinite
line \(\Z\) is to \(\N\): after ordering \(\Tree_{\infty}\) with a
depth-first search starting from infinity (because the root $\varnothing$ has been sent at infinity), \(\Tree_{\infty}\) can be split into the \emph{future} \(\Tree_{+}\) containing all vertices coming after \(c_0\) in
this depth-first search and the \emph{past} \(\Tree_{-}\) containing
those before \(c_0\), and \(\Tree_{\mathrm{inv}}\) (almost)
identifies as \(\Tree_{+}\), see \Cref{fig:futureAndPast}.

\begin{figure}[!h]
  \begin{center}
    \includegraphics[height=7cm]{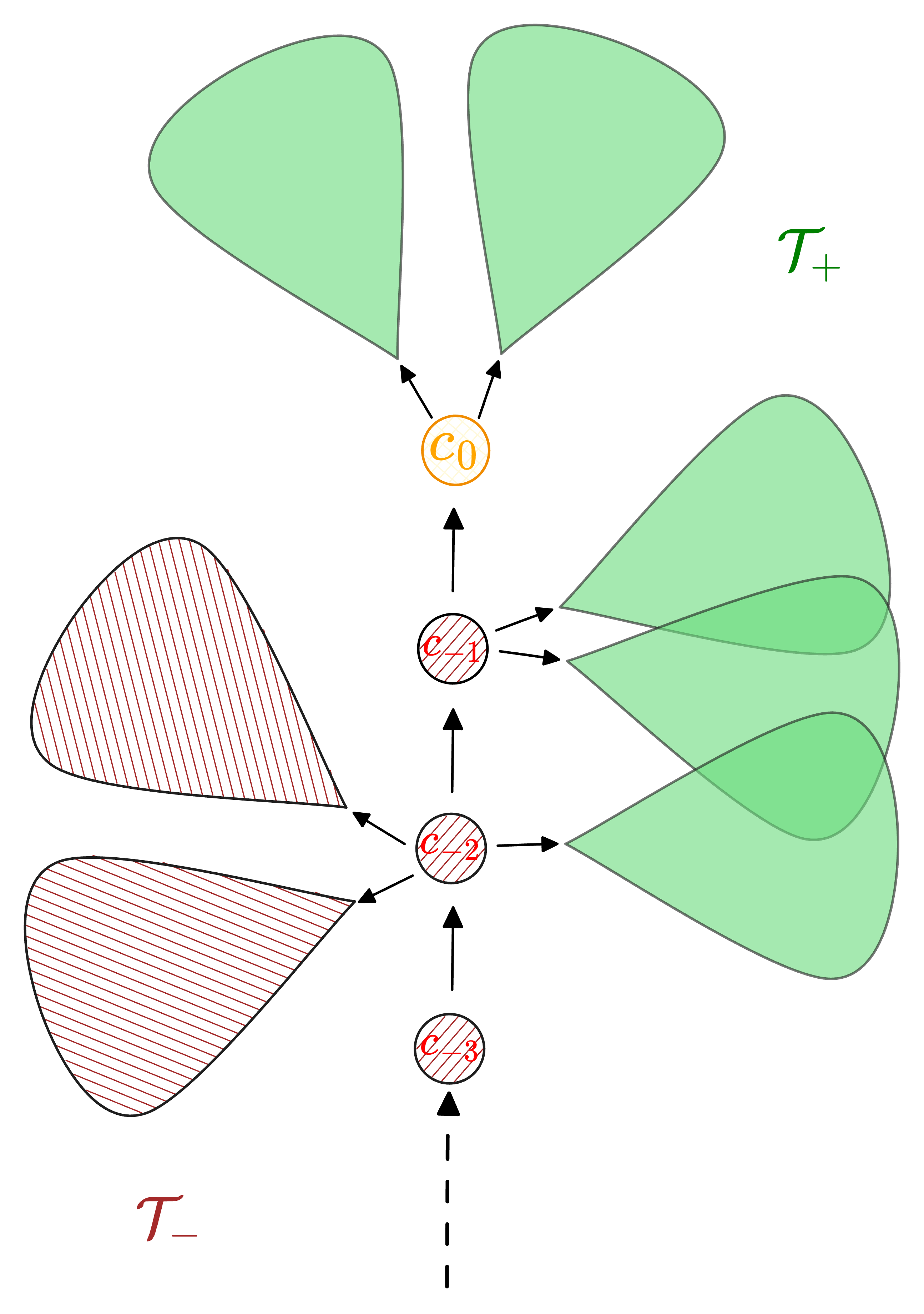}
    \caption{The tree \(\Tree_{\infty}\), with the future in full green and the past in hatched brown. \(\Tree_{\mathrm{inv}}\) is the tree containing the future and the infinite spine. }\label{fig:futureAndPast}
  \end{center}
\end{figure}

In this setting, one of the main results in \autocite{LeGallLin2016RangeTreeindexedRandom} may be
written as follows: assuming that $\mu \in \mathrm{dom}(\alpha)$ with \(\alpha \in (1,2]\), and without assumption on $\theta$, we have

\begin{equation}\label{eq:LinkFiniteInfinite}{
\frac{R^{\theta}(\Tree_{n})}{n} \xrightarrow[n \to \infty]{(\P)} \P \bigl( \esc_{+} \bigr),
}\end{equation}
where \(\esc_{+}= \bigl(W_{\Tree_{\infty}}(u) \neq 0\ \forall u \in \Tree_{+} \bigr)\)
is the event that a \(\Tree_{\infty}\)-walk \(W_{\Tree_{\infty}}\)
starting\footnote{As defined in \Cref{tree-indexed-walk-and-brw}, the starting position for a $\Tree_{\infty}$-walk is by convention the position of $c_0$.} from \(0\) does not come back to its initial position \emph{in the
future}.

In \eqref{eq:LinkFiniteInfinite}, \(\P(\esc_{+})\) clearly is the branching equivalent
of the probability that a classical random walk does not come back to
its initial position, which characterizes the recurrent or transient
behaviour of this walk. As \Cref{prop:DichotomyRecTransienceAdlous} stresses, the exact same
dichotomy holds for a \(\Tree_{\infty}\)-walk. Consequently, we will say that
\(\Tree_{\infty}\)-walks and by extension \(\Tree_{n}\)-walks are
\emph{transient} when \(\P(\esc_{+})>0\), \emph{i.e.} \(R^{\theta}(\Tree_{n})\)
is of order \(n\) with high probability, otherwise they are said to be
\emph{recurrent}.

\begin{proposition}[Dichotomy recurrence/transience for $\Tree_{\infty}$-walk]\label{prop:DichotomyRecTransienceAdlous}
Assume that \(\theta\) is not supported by a strict subgroup of \(\Z^d\). Consider the \emph{total escape} event
\(\esc= \bigl(W_{\Tree_{\infty}}(u) \neq 0\ \forall u \in \Tree_{\infty}\!\setminus\! \{ c_0 \} \bigr)\).

\begin{enumerate}[(i)]
\item
  If \(\P(\esc)=0\) then almost surely, for every \(x \in \Z^d\),
  \(\sum_{u \in \Tree_{\infty}} \1_{W_{\Tree_{\infty}}(u)=x} = +\infty\)
  \emph{i.e.} \(x\) is visited infinitely often by
  \(W_{\Tree_{\infty}}\).
\item
  If \(\P(\esc)>0\) then almost surely, for every \(x \in \Z^d\),
  \(\sum_{u \in \Tree_{\infty}} \1_{W_{\Tree_{\infty}}(u)=x} < +\infty\).
\end{enumerate}
Moreover, \[\P(\esc)=0 \text{ if and only if } \P(\esc_{+})=0,\] where
\(\esc_{+}= \bigl(W_{\Tree_{\infty}}(u) \neq 0\ \forall u \in \Tree_{+} \bigr)\)
as in \eqref{eq:LinkFiniteInfinite}, and in the recurrent case we actually have that almost surely, for every \(x \in \Z^d\),
  \(\sum_{u \in \Tree_{+}} \1_{W_{\Tree_{\infty}}(u)=x} = +\infty\).

\end{proposition}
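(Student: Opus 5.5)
The plan is to treat $\Tree_{\infty}$-walks as a stationary sequence indexed by the depth-first order $\ldots, u_{-1}, u_0=c_0, u_1, \ldots$ of $\Tree_{\infty}$, and to use the invariance of $\Tree_{\infty}$ under re-rooting at the next or previous vertex. I would first establish this invariance: if $\tau$ denotes the shift moving the cursor from $u_k$ to $u_{k+1}$, then $(\Tree_{\infty}, W_{\Tree_{\infty}} - W_{\Tree_{\infty}}(u_1))$ re-pointed at $u_1$ has the same law as the original pair. This follows from the construction as a local limit of $\Tree_n$ pointed at a uniform vertex, since moving a uniform vertex one step in depth-first order preserves uniformity up to boundary effects that vanish as $n\to\infty$. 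The jumps are i.i.d. on edges, so the walk increments are preserved as well. The argument then mimics the classical Kesten--Spitzer--Whitman / Aldous proof for random walks.

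\emph{Step 1 ($\P(\esc_+)=0 \iff \P(\esc)=0$).} Clearly $\esc\subset\esc_+$. Conversely, let $N$ be the number of indices $k\in\Z$ with $W(u_k)=0$. If $N$ is finite and nonzero, let $k^\ast$ be the last such index. By stationarity, a Kac/mass-transport argument gives
\[
\P(0<N<\infty) \le \sum_k \P(k=k^\ast)=\sum_k \P(\esc_+ \text{ at the shift } k)\cdot(\ldots),
\]
which forces $\P(\esc_+)>0$ whenever $\P(0<N<\infty)>0$. In the other direction, suppose $\P(\esc_+)>0$. The events ``$u_k$ is the last visit of its site'' are shifts of $\esc_+$, so by Birkhoff's theorem (possibly via an ergodic decomposition) they occur with positive density. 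A site that is visited infinitely often in the future contributes no last visit, so with positive probability the site $0$ is visited finitely often in both directions. I would then upgrade ``visited finitely often in the future and the past'' to the total escape event $\esc$ by a finite modification of jumps: change the finitely many jumps on the branches leading to the other visits of $0$. This requires the irreducibility of $\theta$ and insertion/deletion tolerance of the i.i.d.\ jumps.

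\emph{Step 2 (the 0--1 dichotomy).} If $\P(\esc_+)=0$, then by stationarity almost surely no $u_k$ is a last visit to its site in the future. Hence every visited site, in particular $0$, is visited infinitely often in $\Tree_+$. To pass from $0$ to every $x\in\Z^d$, I would use irreducibility: there is a finite path of jumps from $0$ to $x$. Each of the infinitely many returns to $0$ occurs at a vertex whose subtree carries fresh jumps, and a vertex has a child with positive probability bounded below, because $\mu\neq\delta_1$ and is critical. A conditional Borel--Cantelli argument, applied along the successive returns, which are stopping times in depth-first order, then shows that $x$ is hit infinitely often in $\Tree_+$. If instead $\P(\esc)>0$, then $\P(\esc_+)>0$ and Step 1 gives $\P(0 \text{ visited finitely often})=1$: otherwise, on the event of infinitely many visits, the previous reasoning would produce no last visit. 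The claim for general $x$ follows by translating the cursor to the first vertex at $x$ and using stationarity.

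The main obstacle I anticipate is the stationarity/ergodicity input and the Borel--Cantelli step. The tree is not a line, so the returns to $0$ in depth-first order are not renewal times: the future after a return still depends on the spine structure. I would handle this with the spine decomposition of $\Tree_\infty$, in which the grafted subtrees are independent size-biased Bienaymé trees, to obtain the needed conditional independence. If full ergodicity of the shift is unclear, I would work with the invariant $\sigma$-field and argue conditionally.
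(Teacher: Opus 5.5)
Your route differs from the paper's: you use stationarity under the depth-first shift, in the Kesten--Spitzer--Whitman spirit, whereas the paper conditions on the spine walk. The easy half is fine and matches the paper. If $\P(\esc_+)=0$, a countable union over deterministic shifts shows that a.s. no vertex is the last visit of its site, so $0$ is visited infinitely often in $\Tree_+$. But the two steps that carry the dichotomy are not established, and both are places where stationarity alone cannot suffice.

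\emph{First gap: there is no $0$--$1$ law.} Part (ii) derives a probability-one conclusion from $\P(\esc)>0$, and your justification (``Step 1 gives $\P(0 \text{ visited finitely often})=1$'') does not follow.
\begin{itemize}
\item Step 1 only relates the \emph{positivity} of $\P(\esc_+)$, $\P(\esc)$ and $\P(N<\infty)$.
\item The remark that there is no last visit on $\{N=\infty\}$ is a tautology.
\item A mixture of a recurrent and a transient stationary law is still stationary. Conditioning on the invariant $\sigma$-field only gives the dichotomy on each ergodic component.
\end{itemize}
So some independence input is needed. The paper uses the Hewitt--Savage $0$--$1$ law on the i.i.d. sequence $(J_i,W_{\mathcal B_i})_{i\ge1}$ of spine jumps and recentred spine bushes. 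The event ``$x$ is visited infinitely often'' is measurable with respect to this sequence. It is invariant under finite permutations, because swapping two indices only moves the finitely many finite bushes in between. An irreducibility and Borel--Cantelli argument shows this probability does not depend on $x$. Then (ii) follows from $\P(0 \text{ visited i.o.})\le 1-\P(\esc)<1$.

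\emph{Second gap: the nontrivial direction of Step 1 is not proved.} You need $\P(\esc_+)>0\Rightarrow\P(\esc)>0$ for (i) as well, since Step 2 assumes $\P(\esc_+)=0$ while (i) only assumes $\P(\esc)=0$.
\begin{itemize}
\item A positive density of last visits says nothing about the past of $c_0$. The past contains the spine and is not distributed like the future. So ``finitely many visits in both directions'' needs its own argument; a mass transport sending unit mass from each visit to the last visit of its site would supply it.
\item More seriously, finitely many visits does not imply total escape under stationarity alone. Take $U$ uniform on $\{0,1\}$ and $X_k=\lfloor (k+U)/2\rfloor$. This sequence is invariant in law under $X\mapsto (X_{k+1}-X_1)_{k\in\Z}$ and visits each value exactly twice. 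Yet $\P(\esc_+)=1/2$ and $\P(\esc)=0$.
\item Your finite-modification step is where independence should enter, but as stated it fails. The branch from $c_0$ to a past visit of $0$ typically runs along the spine. Since $W(c_0)=0$ is pinned, changing a spine jump displaces the whole infinite part of $\Tree_\infty$ beyond that edge. That is not a finite modification and can create new visits. Repairing it would need compensated spine modifications and a finite-energy argument valid for arbitrary lattice $\theta$.
\end{itemize}
The paper avoids all this. Conditionally on the spine walk the bushes are independent, so $\P(\esc)=0$ iff $\sum_k p'(S_k)=\infty$ a.s., and $\P(\esc_+)=0$ iff $\sum_k\widetilde p(S_k)=\infty$ a.s. The asymptotic $p'\sim\alpha\,\widetilde p$ then makes the two series diverge together.
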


\begin{remark}
\Cref{prop:DichotomyRecTransienceAdlous} actually holds without assuming that \(\theta\) is centered or has a finite second moment.
\end{remark}

\subsubsection{Classification of the recurrent behaviour}\label{classification-of-the-recurrent-behaviour}

The second main result of \autocite{LeGallLin2016RangeTreeindexedRandom}
deals with the classification of the recurrent/transient behaviour of
tree-indexed walks. With the additional assumption that $\theta$ has finite moment of order $d-1$, they obtain

\begin{enumerate}[(i)]
\item When \(\mu\) has a finite variance, then a
\(\Tree_{n}\)-walk (or a \(\Tree_{\infty}\)-walk) in \(\Z^{d}\) is
transient if and only if \(d >4\).
\item When $\mu \in \mathrm{dom}(\alpha)$ with \(\alpha \in (1,2]\), then
\(d>\frac{2\alpha}{\alpha-1}\) implies transience.
\end{enumerate}

This result is based on a sufficient criterion implying
\(\P(\esc_{+})>0\) which is recalled in \Cref{le-gall-and-lins-transience-criterion-at-critical-dimension}. However the second
moment method used to establish recurrence for \(d\leq 4\) when the
variance of $\mu$ is finite fails as soon as we relax this assumption.

Here we address the remaining question in the \(\alpha\)-stable case,
which is to know if there is recurrence as soon as
\(d \leq  \frac{2\alpha}{\alpha-1}\), with a quite different approach.
We directly work with the finite trees \(\Tree_{n}\) and we bound from above
\(R^{\theta}(\Tree_{n})\) thanks to a connection between the maximum of a
\(\Tree_{n}\)-walk and the dimension of \(\Tree_{n}\) adapted from Maillard
\autocite{Maillard2016MaximumTreeindexedRandom}. 

Actually, this result is quite general. It applies to the branching random walks studied here but also to \(T_{n}\)-walks
where \((T_{n})_{n}\) is any sequence of trees with size \(n+1\) that are \emph{at least \(D\)-dimensional} for some \(D >1\) in the sense of Maillard, \textit{i.e.} such that their height $H(T_n)$ satisfies the following assumption.

\begin{assumption}[Dimension of trees]\label{Assum:MaillardDimension}
\begin{itemize}
    \item There is $D>1$ such that
\begin{equation*}{
\forall \varepsilon>0, \P(H(T_n) \geq n^{\frac{1}{D} + \varepsilon}) \xrightarrow[n \to +\infty]{}0, \text{ or equivalently } \forall \varepsilon>0, \frac{H(T_n)}{n^{\frac{1}{D}+\varepsilon}} \xrightarrow[n \to +\infty]{(\P)} 0.
}\end{equation*}
\end{itemize}

\end{assumption}
We first give the general recurrence criterion which may be applied as soon as \Cref{Assum:MaillardDimension} holds.

\begin{theorem}[Recurrence criterion]\label{thm:RecurrenceCriterion}

Consider \(T_{n}\)-walks in \(\Z^{d}\), where \((T_{n})\) is any sequence of trees with size \(n+1\) satisfying
\Cref{Assum:MaillardDimension} for some $D>1$. Also assume that the centered jump distribution \(\theta\) has finite (absolute) moment of order $\delta$ for all $\delta < 2D$. As soon as \(d<2D\), the \(T_{n}\)-walks are recurrent:
\begin{equation*}{
\frac{R^{\theta}(T_{n})}{n} \xrightarrow[n \to +\infty]{(\P)} 0. 
}\end{equation*}

\end{theorem}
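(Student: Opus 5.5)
The plan is to bound the range by the volume of a box that contains the whole walk: $R^{\theta}(T_n)\le |\{x: \|x\|_\infty\le M_n\}|\le (2M_n+1)^d$, where $M_n=\max_{u\in T_n}\|W_{T_n}(u)\|$. It then suffices to prove that $M_n = o_{\P}(n^{1/d})$, or more precisely that $M_n\le n^{\frac{1}{2D}+\varepsilon}$ with high probability for every $\varepsilon>0$. Since $d<2D$, we can pick $\varepsilon$ small enough that $d(\frac{1}{2D}+\varepsilon)<1$, and then $R^\theta(T_n)/n\to 0$ in probability. This reduces the theorem to a maximum estimate in the spirit of Maillard.

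Conditionally on $T_n$, every vertex $u$ has position $W(u)=\sum$ of the $|u|\le H(T_n)$ i.i.d.\ jumps along the path from the root to $u$. We work on the event $\{H(T_n)\le h_n\}$ with $h_n=n^{1/D+\eta}$; by \Cref{Assum:MaillardDimension} this event has probability tending to $1$. A naive union bound over the $n+1$ vertices, combined with a Gaussian tail at scale $\sqrt{h_n}$, would give $M_n\lesssim \sqrt{h_n\log n}=n^{\frac{1}{2D}+\eta/2+o(1)}$, which is exactly the desired order. The Gaussian tail is not available, though, because $\theta$ only has moments of order $\delta<2D$. We therefore truncate the jumps at level $n^{\frac{1}{2D}+\varepsilon'}$ for a small $\varepsilon'>0$. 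There are $n$ edges, and by Markov's inequality with a $\delta$-moment,
\[\P\bigl(\exists\text{ edge with jump}>n^{\frac{1}{2D}+\varepsilon'}\bigr)\le n\,\E|X|^\delta\, n^{-\delta(\frac{1}{2D}+\varepsilon')}.\]
Choosing $\delta<2D$ close enough to $2D$ makes $\delta(\frac{1}{2D}+\varepsilon')>1$, so this probability tends to $0$. The truncated jumps are bounded, though no longer exactly centered, with mean of order $\E[|X|\1_{|X|>K}]\le \E|X|^\delta K^{1-\delta}$ where $K=n^{\frac{1}{2D}+\varepsilon'}$. The total drift along a path is then at most $h_n K^{1-\delta}$, which is $o(n^{1/(2D)})$ once $\delta$ is close to $2D$ and $\eta,\varepsilon'$ are small (as long as $D>1$, so that $\delta>1$).

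For the centered truncated sums we apply Bernstein's (or Hoeffding's) inequality along each root-to-vertex path. The variance is at most $h_n\,\mathrm{tr}\,\Sigma_\theta^2$ and the increments are bounded by $K$. The deviation at level $t=n^{\frac{1}{2D}+\varepsilon}$ with $\varepsilon>\eta/2+\varepsilon'$ then has probability at most $\exp(-c\min(t^2/h_n,\,t/K))=\exp(-c\,n^{\kappa})$ for some $\kappa>0$. A union bound over the $n+1$ vertices, applied conditionally on $T_n$ (the jumps are independent of the tree), finishes the estimate.

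The main obstacle is the bookkeeping of exponents in the truncation step: the same $\delta<2D$ must make the union bound over edges vanish and make the drift negligible, while Bernstein's exponent $t/K$ must stay polynomially large. This forces the ordering $\varepsilon'<\varepsilon$, then $\eta$ small, then $\delta$ close to $2D$. The heavy-tail cutoff $\delta<2D$ is precisely what the moment hypothesis is designed for.
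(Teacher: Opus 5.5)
Your proof is correct, but it takes a different route from the paper's. The paper never estimates the maximum directly. It invokes Maillard's theorem, which says that for centered jumps with $(-\beta)$-varying tails and $\beta<2D$, $\max\lvert W_n\rvert$ is of order $n^{1/\beta}h(n)$. It then transfers this to light-tailed jumps by a coupling trick:
\begin{itemize}
\item add to each light-tailed jump $J'$ an independent heavy-tailed $J$ whose index $\beta$ satisfies $\tfrac{1}{2D}<\tfrac{1}{\beta}<\tfrac{1}{2D}+\varepsilon$;
\item check that $J+J'$ still has a $(-\beta)$-varying tail;
\item conclude from $\max\lvert W_n'\rvert\le \max\lvert W_n\rvert+\max\lvert W_n+W_n'\rvert$.
\end{itemize}
That route is very short once Maillard's result is granted, and it also gives \Cref{rmk:BigJumpEffect} for heavy-tailed $\theta$ essentially for free. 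However, it rests on a fairly deep external theorem whose conclusion, a Pareto limit law, is far more than is needed here.

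Your route is self-contained and elementary. You condition on the tree and restrict to $\{H(T_n)\le n^{1/D+\eta}\}$. You remove jumps above $n^{\frac{1}{2D}+\varepsilon'}$ by a union bound over the $n$ edges, control the drift created by truncation, and apply Bernstein (coordinatewise) along each of the $n+1$ root-to-vertex paths. This uses only the height/size information encoded in \Cref{Assum:MaillardDimension}. It also makes transparent why $2D$ is the threshold: $\sqrt{H(T_n)}\approx n^{1/(2D)}$ is the Gaussian scale, and $n\,\P(\lvert X\rvert>n^{1/(2D)})\to 0$ is exactly what roughly $2D$ moments provide.

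Your exponent bookkeeping checks out:
\begin{itemize}
\item the truncation step needs $\delta(\tfrac{1}{2D}+\varepsilon')>1$, which is possible with $\delta<2D$;
\item as $\delta\to 2D$, the drift exponent $\tfrac1D+\eta-(\delta-1)(\tfrac{1}{2D}+\varepsilon')$ tends to $\tfrac{3}{2D}-1+\eta-(2D-1)\varepsilon'$, which is below $\tfrac{1}{2D}$ for small $\eta$ precisely because $D>1$;
\item Bernstein gives a bound $\exp(-c\,n^{\min(2\varepsilon-\eta,\,\varepsilon-\varepsilon')})$, which beats the factor $n+1$ from the union bound.
\end{itemize}
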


This enables us to answer
 positively to the question of recurrence in the $\alpha$-stable case.

\begin{corollary}[Recurrence of BRW in low dimension]\label{cor:RecLowDim}
Recall that \(\mu \in \mathrm{dom}(\alpha)\) for \(\alpha \in (1,2]\), and assume that $\theta$ has finite moment of order $\delta$ for all $\delta < \frac{2\alpha}{\alpha-1}$.
When \(d < \frac{2\alpha}{\alpha-1}\), a \(\Tree_{n}\)-walk in \(\Z^d\)
with $\theta$-jumps is recurrent:

\begin{equation*}{
\frac{R^{\theta}(\Tree_{n})}{n} \xrightarrow[n \to +\infty]{(\P)} 0.
}\end{equation*}
Consequently, with the same assumptions we have \(\P(\esc_{+})=\P(\esc)=0\).
\end{corollary}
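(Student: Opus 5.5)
The plan is to derive \Cref{cor:RecLowDim} from \Cref{thm:RecurrenceCriterion} applied with $T_n=\Tree_n$ and $D=\frac{\alpha}{\alpha-1}$. Since $\alpha\in(1,2]$, we have $D\geq 2>1$, and the moment assumption on $\theta$ is exactly the one required by the theorem for this value of $D$. So the only thing to check is that $(\Tree_n)$ satisfies \Cref{Assum:MaillardDimension} with $D=\frac{\alpha}{\alpha-1}$. In other words, for every $\varepsilon>0$ we need $\P\bigl(H(\Tree_n)\geq n^{\frac{\alpha-1}{\alpha}+\varepsilon}\bigr)\to 0$.

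For this height estimate I would use the classical scaling limit of size-conditioned Bienaymé trees in the stable domain. By Duquesne's theorem, when $\mu\in\mathrm{dom}(\alpha)$ is critical with $\alpha\in(1,2]$, there is a sequence $a_n=n^{1/\alpha}\ell(n)$ with $\ell$ slowly varying (determined by $L$ in \eqref{eq:GenMuStable}) such that $\frac{a_n}{n}H(\Tree_n)$ converges in distribution to the height of the normalized $\alpha$-stable tree. That limit is an almost surely finite random variable. Hence $H(\Tree_n)=O_{\P}\bigl(n^{1-1/\alpha}/\ell(n)\bigr)$. Potter's bounds give $1/\ell(n)\leq n^{\varepsilon/2}$ for $n$ large, so $H(\Tree_n)/n^{\frac{\alpha-1}{\alpha}+\varepsilon}\to 0$ in probability. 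This is \Cref{Assum:MaillardDimension}.

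If I wanted to avoid the full scaling limit, I would use the Lukasiewicz path together with the Vervaat transform. $\Tree_n$ is coded by a conditioned random walk with step law $\mu(\cdot+1)$. The height of the tree is bounded by a functional of this path, and the path's fluctuations are of order $a_n$. This leads to the same bound. Either route is standard, and I expect the only real care to be the slowly varying corrections, which the $\varepsilon$ in the assumption absorbs. \Cref{thm:RecurrenceCriterion} then gives $R^{\theta}(\Tree_n)/n\to 0$ in probability whenever $d<2D=\frac{2\alpha}{\alpha-1}$.

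For the final claim, the right-hand side of \eqref{eq:LinkFiniteInfinite} is a deterministic constant, and the convergence in probability also holds to $0$. Limits in probability are unique, so $\P(\esc_+)=0$. The equivalence in \Cref{prop:DichotomyRecTransienceAdlous} then gives $\P(\esc)=0$. One subtlety is that the proposition assumes $\theta$ is not supported by a strict subgroup. However, $\esc\subset\esc_+$ holds directly, since $\Tree_+\subset\Tree_\infty$ and in the future part $c_0$ itself is excluded. So $\P(\esc)\leq\P(\esc_+)=0$ without invoking the proposition.
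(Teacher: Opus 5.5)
Your proposal is correct and is essentially the paper's argument. You verify \Cref{Assum:MaillardDimension} for $\Tree_n$ with $D=\frac{\alpha}{\alpha-1}$ via the height scaling limit \eqref{eq:HeightEstimate}, apply \Cref{thm:RecurrenceCriterion}, and deduce $\P(\esc_+)=0$ from \eqref{eq:LinkFiniteInfinite}. Your use of $\esc\subset\esc_+$ to get $\P(\esc)=0$ is exactly the easy direction of \Cref{prop:DichotomyRecTransienceAdlous}, and it rightly sidesteps that proposition's subgroup hypothesis.

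Your optional Lukasiewicz-path alternative is unnecessary and imprecise as written. Heights are read off the path through counts of weak records (ladder epochs), which are of order $n/a_n$; they are not controlled by the size $a_n$ of the path's fluctuations.
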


As an illustration of the generality of \Cref{thm:RecurrenceCriterion}, we also apply it to
cover a limit case of stable critical Bienaymé trees where \(\alpha=1\).

\subsubsection{\texorpdfstring{The limiting case \(\alpha=1\)}{The limiting case \textbackslash alpha=1}}\label{the-limiting-case-alpha1}

In this subsection only, we make a different assumption on the
offspring distribution \(\mu\):

\begin{assumption}\label{Assum:Cauchy}
\begin{itemize}
  \item \(\mu\) is critical and such that \(n \mapsto \mu(n)\) is
\((-2)\)-varying.
\end{itemize}
\end{assumption}

This assumption implies that \(\mu\) is attracted to a spectrally
positive Cauchy distribution, which is stable of index \(\alpha=1\). In
this setting, the geometry of \(\Tree_{n}\) is quite different
(\emph{e.g.} \autocite{KortchemskiRichier2019CondensationCriticalCauchy}
revealed that a condensation phenomenon occurs, \emph{i.e.} for large
\(n\), \(\Tree_{n}\) looks like a giant star-shape tree) and much less
is known compared to the previous setting. Nonetheless Addario-Berry, Donderwinkel \& Kortchemski
\autocite{Addario-BerryDonderwinkelKortchemski2025CriticalTreesAre}
gave an estimate on the height of \(\Tree_{n}\) which indicates
that those trees are \(\infty\)-dimensional in the sense of \Cref{Assum:MaillardDimension}. With \Cref{thm:RecurrenceCriterion} in hands, we get

\begin{corollary}[No dichotomy in the Cauchy case]\label{cor:RecCauchy}

When $\mu$ satisfies \Cref{Assum:Cauchy} and $\theta$ has finite moment of order $\delta$ for all $\delta >0$, then a
\(\Tree_{n}\)-walk with $\theta$-jumps in \(\Z^{d}\) is
recurrent, whatever the dimension \(d\) is:

\begin{equation*}{
\frac{R^{\theta}(\Tree_{n})}{n} \xrightarrow[n \to +\infty]{(\P)} 0.
}\end{equation*}

\end{corollary}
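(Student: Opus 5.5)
The plan is to deduce \Cref{cor:RecCauchy} directly from \Cref{thm:RecurrenceCriterion}. That theorem requires two inputs: \Cref{Assum:MaillardDimension} for the trees \(T_n=\Tree_n\), and finite moments of \(\theta\) of all orders \(\delta<2D\). The idea is to check \Cref{Assum:MaillardDimension} for \emph{every} \(D>1\), not just a single one. Since \(\theta\) has finite moments of all orders \(\delta>0\), the moment hypothesis then holds for any such \(D\).

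\textbf{Step 1: the height estimate.} The key input is the height estimate of Addario-Berry, Donderwinkel \& Kortchemski \autocite{Addario-BerryDonderwinkelKortchemski2025CriticalTreesAre} for critical \(\mu\) under \Cref{Assum:Cauchy}. As I recall it, it says that \(H(\Tree_n)\) grows slower than any power of \(n\); more precisely, \(H(\Tree_n)/\log n\) or a similar slowly varying normalisation is tight, or converges in probability. In any form, what I extract is
\[
\forall \eta>0,\qquad \P\bigl(H(\Tree_n)\geq n^{\eta}\bigr)\xrightarrow[n\to+\infty]{}0 .
\]
If their statement is phrased as convergence in probability of \(H(\Tree_n)/f(n)\) for some slowly varying \(f\), this follows because \(f(n)=o(n^\eta)\) for every \(\eta>0\).

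\textbf{Step 2: checking the hypotheses.} Fix an arbitrary \(d\) and choose \(D>\max(1,d/2)\). For every \(\varepsilon>0\), apply the display above with \(\eta=\frac1D+\varepsilon\). This gives \Cref{Assum:MaillardDimension} for this \(D\). The moment condition on \(\theta\) holds because all moments are finite. Since \(d<2D\), \Cref{thm:RecurrenceCriterion} yields \(R^{\theta}(\Tree_n)/n\to0\) in probability.

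\textbf{Main obstacle.} The only delicate point is the precise form of the cited height result. Its normalisation may involve the slowly varying function attached to \(\mu\), and it may be stated for a slightly different conditioning, for instance on \(n\) versus \(n+1\) vertices, or under extra aperiodicity assumptions. I would check that the conditioning matches the paper's \(\Tree_n\) having \(n+1\) vertices, or note that an index shift is harmless. I would also check that the normalising sequence is subpolynomial, which is all that is needed here.
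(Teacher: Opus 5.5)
Your proposal is correct and is essentially the paper's own argument. The paper uses the height estimate of Addario-Berry, Donderwinkel \& Kortchemski, namely that $H(\Tree_n)$ grows like a slowly varying function, to conclude that $\Tree_n$ satisfies \Cref{Assum:MaillardDimension} for every $D>1$, and then applies \Cref{thm:RecurrenceCriterion} with $D$ large enough that $d<2D$, just as you do with $D>\max(1,d/2)$.
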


However, note that \eqref{eq:LinkFiniteInfinite} is not established in this new setting,
hence \(\Tree_{\infty}\) can still be defined as previously but
\Cref{cor:RecCauchy} does not prove that a \(\Tree_{\infty}\)-walk is recurrent
in any dimension (yet, we expect this to be true).

\subsubsection{At the critical dimension}\label{at-the-critical-dimension}

Whenever \(\frac{2\alpha}{\alpha-1}\) is an integer, \emph{i.e.}
\(\alpha= \frac{m}{m-2}\) for some \(m\geq 4\), it still remains to
understand the behaviour of a \(\Tree_{n}\)-walk in
\(\Z^{\frac{2\alpha}{\alpha-1}}\). To do so, we combine Le Gall \& Lin's
criterion and our approach to bound from above \(R^{\theta}(\Tree_{n})\) with some
sharper results based on scaling limits of \(\Tree_{n}\)-walks. In
the Gaussian case \(\alpha=2\), we prove that even though the variance may be infinite, \(\Tree_{n}\)-walks are
still recurrent in \(\Z^{4}\).

\begin{proposition}[Recurrence at the critical dimension for $\alpha=2$]\label{prop:RecCriticalGaussian}

Assume that \(\alpha=2\), \emph{i.e.} \(\mu\) is attracted to a
Gaussian distribution, and that $\theta$, defined here on $\Z^4$, has a finite moment of order $\delta>4$. A \(\Tree_{n}\)-walk in \(\Z^4\) with $\theta$-jumps  is recurrent:

\begin{equation*}{
\frac{R^{\theta}(\Tree_{n})}{n} \xrightarrow[n \to +\infty]{(\P)} 0.
}\end{equation*}

\end{proposition}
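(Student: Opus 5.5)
We must show $R^\theta(\Tree_n)/n\to 0$ in $\Z^4$ when $\mu\in\mathrm{dom}(2)$, possibly with infinite variance, so \Cref{thm:RecurrenceCriterion} (which needs $d<2D=4$) does not apply directly. The plan is to sharpen the maximum argument behind \Cref{thm:RecurrenceCriterion} using the scaling limit of the tree. With $\alpha=2$, the height of $\Tree_n$ is of order $a_n$, where $a_n=\sqrt n\,\ell(n)$ for a slowly varying $\ell$. The displacement of the $\Tree_n$-walk is then of order $\sqrt{a_n}=n^{1/4}\ell(n)^{1/2}$. The range is therefore trivially bounded by the number of lattice points in a ball of that radius, i.e.\ $O(n\,\ell(n)^2)$. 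This is borderline. The slowly varying correction must be shown to actually lower the count, or the trivial counting bound replaced by a genuinely smaller estimate.

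\textbf{Step 1 (scaling limit).} Under $\mu\in\mathrm{dom}(2)$, the contour of $\Tree_n$ rescaled by $a_n/n$ converges to the Brownian CRT $\mathcal T_{\mathbf e}$, after Duquesne's results for stable trees. Since $\theta$ has a moment of order $\delta>4$, the spatial positions rescaled by $\sqrt{n/a_n}$ converge jointly to the Brownian snake, i.e.\ ISE in $\R^4$. Tightness of the maximum uses the moment assumption, exactly as in the proof of \Cref{thm:RecurrenceCriterion} adapted from Maillard.

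\textbf{Step 2 (occupation measure of small boxes).} Fix $\varepsilon>0$ and tile $\Z^4$ into boxes of side $\varepsilon r_n$, where $r_n=\sqrt{a_n}$. Each box contains $\asymp\varepsilon^4 r_n^4=\varepsilon^4 a_n^2$ lattice points, and $a_n^2=n\,\ell(n)^2$. The range is at most the number of boxes hit times $\varepsilon^4 a_n^2$. By convergence to ISE, the number of boxes hit is close to the number of $\varepsilon$-boxes hit by the ISE support. Since the support of ISE in $\R^4$ has Hausdorff dimension $4$ but zero Lebesgue measure (critical dimension for super-Brownian motion), this number is $o(\varepsilon^{-4})$ as $\varepsilon\to0$ with high probability. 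This gives $R^\theta(\Tree_n)\le o(1)\,n\,\ell(n)^2$.

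\textbf{Step 3 (main obstacle: the slowly varying factor).} If $\ell(n)\to\infty$, Step 2 is not enough, and we need a different mechanism. I would first try an inequality $\P(\esc_+)>0\Rightarrow R^\theta(\Tree_n)\ge cn$. I would then compare it with Le Gall \& Lin's criterion recalled in \Cref{le-gall-and-lins-transience-criterion-at-critical-dimension}, whose failure in $d=4$ for $\alpha=2$ should be checkable. The criterion involves a sum like $\sum_k k\,\P(\text{spine subtree of height}\ge k)\cdot k^{-d/2}$, which diverges for $d=4$ up to slowly varying terms. If that route is blocked, the fallback is to argue by contradiction. Suppose $\P(\esc_+)>0$. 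Then by \eqref{eq:LinkFiniteInfinite}, $R^\theta(\Tree_n)\sim \P(\esc_+)n$, and positions are visited $O(1)$ times. Hence the walk's local occupation is spread over $\gtrsim n$ sites inside a ball of radius $\ll r_n$, after truncating at $\eta r_n$ with $\eta\to0$ using tightness from Step 1. For $\alpha=2$ the function $\ell$ is nondecreasing-type: $\sigma^2$ infinite means the truncated variance grows, so $a_n$ grows and $\ell\to\infty$. I would then combine Step 2's $o(1)$ with an estimate showing that the fraction of $\Tree_n$ whose positions lie in a region of ISE-density scale costs the factor $\ell(n)^{-2}$, e.g.\ via a second-moment bound on local times of the walk at scale $\sqrt{a_n}$. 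This step is the delicate one and where most of the work would lie.
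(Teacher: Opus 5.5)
Your proposal has a genuine gap: the slowly varying correction is upside down. For $\alpha=2$ the height of $\Tree_n$ is of order $n/a_n=\sqrt n/\ell(n)$, not $a_n=\sqrt n\,\ell(n)$ (see \eqref{eq:HeightEstimate}; a heavier offspring tail makes the tree bushier and shorter). So the displacement of the $\Tree_n$-walk is of order $n^{1/4}\ell(n)^{-1/2}$, as in \eqref{eq:CVMaximumMarzouk}. Your Step 1 writes this correct normalization (contour scaled by $a_n/n$, positions by $\sqrt{n/a_n}$), but your introduction, Step 2 and Step 3 use $r_n=\sqrt{a_n}$ instead.

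At the right scale, the crude box count gives $R^{\theta}(\Tree_n)\preceq n\,\ell(n)^{-2}$. Hence the case $\ell\to+\infty$ (equivalently $\sigma_\mu^2=+\infty$ when $\alpha=2$), which you call the main obstacle, is the trivial one. Tightness of $\max\lVert W_{\Tree_n}\rVert/(n^{1/4}\ell(n)^{-1/2})$ gives $\max\lVert W_{\Tree_n}\rVert=o_{\P}(n^{1/4})$, and hence $R^{\theta}(\Tree_n)=o_{\P}(n)$. That is exactly the paper's proof, with the finite-variance case quoted from Le Gall \& Lin.

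As written, your Step 3 contains no valid argument:
\begin{itemize}
\item Le Gall \& Lin's criterion is only sufficient for transience, so its failure proves nothing.
\item Your contradiction argument yields no contradiction at your scale, since a ball of radius $\eta\sqrt{a_n}$ contains $\asymp\eta^4 n\,\ell(n)^2\gg n$ sites.
\item Second-moment bounds on local times are precisely what breaks down when $\sigma_\mu^2=+\infty$.
\end{itemize}
Tightness at the exact scale also does not follow from the Maillard-type \Cref{prop:MaillardLightTailVersion}. That result loses a factor $n^{\varepsilon}$ and is useless at the critical dimension. Tightness comes from Marzouk's theorem, whose hypothesis \Cref{Assum:MomentAssumJumpMarzouk} is implied by the moment of order $\delta>4$.

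Once the scale is fixed, your Step 2 is in fact a complete proof by a genuinely different route. Take boxes of side $\varepsilon r_n$ with $r_n=n^{1/4}\ell(n)^{-1/2}$. Under a Skorokhod coupling of the Hausdorff convergence of the rescaled range to the range $\mathcal R$ of ISE in $\R^4$, $\limsup_n R^{\theta}(\Tree_n)/r_n^4$ is at most a constant times the Lebesgue measure of the $C\varepsilon$-neighbourhood of $\mathcal R$. This tends to $0$ as $\varepsilon\to0$, because points are polar for the Brownian snake in dimension $4$, so $\mathcal R$ is Lebesgue-null by Fubini. Since $\ell$ is bounded below when $\alpha=2$, this gives $R^{\theta}(\Tree_n)=o_{\P}(n)$ in the finite and infinite variance cases at once.

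This corrected route replaces the appeal to Le Gall \& Lin's second-moment method with a fine property of the limiting snake. The paper's route is shorter in the infinite-variance case, because there the crude bound $(\max\lVert W_{\Tree_n}\rVert)^4$ already wins by the factor $\ell(n)^{-2}$, and only the scale of the maximum is needed.
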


\begin{remark}
Actually, we do not fully need the existence of a moment of order $\delta >4$ for \Cref{prop:RecCriticalGaussian} to hold. We require the slightly weaker but more technical \Cref{Assum:MomentAssumJumpMarzouk}, which ensures that $\Tree_n$-walks has the Brownian snake indexed by an $\alpha$-stable Lévy tree as a scaling limit according to \autocite[Theorem 1]{Marzouk2020ScalingLimitsDiscrete}.
\end{remark}

On the contrary, when \(\alpha <2\) and \(\frac{2\alpha}{\alpha-1}\) is
an integer, we do not have a complete classification but we know that
there is no universal behaviour anymore. Indeed, we have already mentioned that a critical 
offspring distribution $\mu \in \mathrm{dom}(\alpha)$ is characterized by \eqref{eq:GenMuStable}. In particular, $\mu$ is not fully characterized by $\alpha$, one must also consider a slowly varying function \(L\) to understand $\mu$ properly. We observe that depending on $L$, we may actually have recurrent or transient
\(\Tree_{n}\)-walk in \(\Z^{\frac{2\alpha}{\alpha-1}}\) !

\begin{assumption}[A sufficient condition for transience]\label{Assum:SufficientConditionTransience}
\begin{itemize}
  \item \(\mu\) satisfies \eqref{eq:GenMuStable} with a slowly varying function \(L\)
such that
\end{itemize}

\begin{equation*}{
 \sum_{n \geq 1} \frac{L(n^{\frac{2}{\alpha-1}})}{n} < +\infty.
 }\end{equation*}

\end{assumption}

\begin{proposition}[no universal behaviour at the critical dimension]\label{prop:NoUniversalityAtCriticality}

Assume that \(\alpha > 2 \text{ and } \frac{2\alpha}{\alpha-1} \in \N\). Also assume that $\theta$, defined here on $\Z^{\frac{2\alpha}{\alpha-1}}$, has a finite moment of order $\delta > \frac{2\alpha}{\alpha-1}$. Consider \(\Tree_{n}\)-walk in \(\Z^{\frac{2\alpha}{\alpha-1}}\) with $\theta$-jumps.

\begin{enumerate}[(i)]
  \item 
If \Cref{Assum:SufficientConditionTransience} holds, then this \(\Tree_{n}\)-walk is transient:
\begin{equation*}{
\frac{R^{\theta}(\Tree_{n})}{n} \xrightarrow[n \to +\infty]{(\P)} \P(\esc_{+}) > 0.
}\end{equation*}
  \item If \(L(n) \to +\infty\) then this \(\Tree_{n}\)-walk is
recurrent:
\begin{equation*}{
\frac{R^{\theta}(\Tree_{n})}{n} \xrightarrow[n \to +\infty]{(\P)}  0.
}\end{equation*}
\end{enumerate}

\end{proposition}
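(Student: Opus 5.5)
For (i) we use Le Gall \& Lin's transience criterion recalled in \Cref{le-gall-and-lins-transience-criterion-at-critical-dimension}. For (ii) we bound the range by the volume of the ball containing the walk, using the scaling limit of \autocite[Theorem 1]{Marzouk2020ScalingLimitsDiscrete} as in the proof of \Cref{prop:RecCriticalGaussian}. Throughout, $d=\frac{2\alpha}{\alpha-1}$. We read the hypothesis as $\alpha<2$, since $\alpha\in(1,2]$ and the case $\alpha=2$ is \Cref{prop:RecCriticalGaussian}.

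\textbf{Part (i).} The criterion asks for finiteness of an expected number of returns to $0$ by the walk indexed by $\Tree_+$. Once this is shown, \eqref{eq:LinkFiniteInfinite} gives $\P(\esc_+)>0$ and the stated limit. The plan is:
\begin{enumerate}[1.]
\item Decompose $\Tree_\infty$ along the spine. The future consists of the spine together with the (size-biased) Bienaymé trees grafted to its right.
\item Estimate, for each $k$, the expected number $N_k$ of vertices $u\in\Tree_+$ at graph distance $k$ from $c_0$. Generation sizes of a critical $\mu$-tree conditioned to survive to height $k$ are of order $\bigl(1/\P(H(\Tree)\ge k)\bigr)$. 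Via \eqref{eq:GenMuStable}, the survival probability $q_k$ solves $q_{k}-q_{k+1}\approx q_k^{\alpha}L(1/q_k)$. Summing contributions over spine positions should give $N_k\approx k^{1/(\alpha-1)}\,\widetilde L(k)$, where $\widetilde L$ is slowly varying and explicit in terms of $L$.
\item Combine with the local CLT. By moment assumptions, $\P(S_k=0)\asymp k^{-d/2}=k^{-\alpha/(\alpha-1)}$, so the series becomes $\sum_k k^{-1}\widetilde L(k)$.
\item Reparametrize. The delicate step is the regular-variation algebra identifying $\widetilde L(k)$ with a quantity comparable to $L(k^{2/(\alpha-1)})$ after reindexing (e.g.\ by spatial scale, $k=n^2$ up to constants). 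The substitution is done in dyadic blocks, so convergence of $\sum_k \widetilde L(k)/k$ is equivalent to \Cref{Assum:SufficientConditionTransience}.
\end{enumerate}
If the criterion is phrased with tree-distance rather than spatial scale, only this reindexing changes.

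\textbf{Part (ii).} The plan is:
\begin{enumerate}[1.]
\item Let $a_n$ be the normalizing sequence, with $a_n^\alpha\sim n\,L(a_n)$ up to constants. Then $H(\Tree_n)$ is of order $n/a_n=n^{(\alpha-1)/\alpha}L(a_n)^{-1/\alpha}$, up to a slowly varying correction.
\item Under the moment assumption ($\delta>d$, or \Cref{Assum:MomentAssumJumpMarzouk}), \autocite[Theorem 1]{Marzouk2020ScalingLimitsDiscrete} gives convergence of the rescaled $\Tree_n$-walk $\sqrt{a_n/n}\,W_{\Tree_n}$ to the Brownian snake on the stable tree. In particular $\max_u|W_{\Tree_n}(u)|/\sqrt{n/a_n}$ is tight.
\item The range is at most the number of lattice points in the ball of radius $\max_u|W_{\Tree_n}(u)|$. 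Hence, with high probability,
\begin{equation*}
R^\theta(\Tree_n)\le C_\varepsilon\,(n/a_n)^{d/2}=C_\varepsilon\, n\,L(a_n)^{-1/(\alpha-1)} .
\end{equation*}
This is $o(n)$ because $L\to\infty$ and $a_n\to\infty$.
\end{enumerate}

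\textbf{Main obstacle.} I expect the main difficulty in (ii) to be checking precisely that Marzouk's hypotheses hold at this critical moment threshold, where the maximum is exactly at the scale $\sqrt{n/a_n}$ and no $n^{\varepsilon}$ slack is allowed. In (i), the main obstacle is the slowly varying bookkeeping matching the criterion to \Cref{Assum:SufficientConditionTransience}.
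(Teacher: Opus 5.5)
\textbf{Part (ii).} This part is fine and follows essentially the paper's argument: the maximum scaling \eqref{eq:CVMaximumMarzouk} from Marzouk, combined with the volume bound $R^\theta(\Tree_n)\preceq(\max\lVert W_{\Tree_n}\rVert)^d$. The paper phrases the divergence through $\ell(n)\to+\infty$, which is equivalent to $L\to+\infty$. You write the resulting bound directly as $n\,L(a_n)^{-1/(\alpha-1)}$, which is the same thing.

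\textbf{Part (i).} This part has a genuine gap. You propose to bound the expected number of visits to $0$ by the $\Tree_+$-walk, computed as $\sum_k N_k\,\P(S_k=0)$. That quantity is $+\infty$ whenever $\alpha<2$, whatever $d$ and $L$ are. The right siblings of $c_0$ have $\widetilde\mu$-distributed cardinality, and
$$\sum_m m\,\widetilde\mu(m)=\sum_j \mu(j)\,\frac{j(j-1)}{2}=+\infty,$$
because $\mu$ has infinite variance. Hence $N_k=+\infty$ for every $k\ge 2$. This is also why the natural Green function of a $\Tree_{\pm}$-walk is infinite as soon as $\sigma_\mu^2=+\infty$. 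Your growth rate $k^{1/(\alpha-1)}$ is the typical (in probability) size of spheres in an $\frac{\alpha}{\alpha-1}$-dimensional tree, not a mean, so it cannot feed a first-moment bound. Your step 4, which matches $\widetilde L$ to \Cref{Assum:SufficientConditionTransience}, is also left unproven.

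\textbf{The missing idea.} Le Gall--Lin's criterion is a first moment on the number of \emph{bushes} that hit $0$, not on the number of vertices. Conditionally on the spine walk, each bush contributes only its hitting probability $\widetilde p(S_k)\le 1$; this is what tames the heavy tail. The criterion reads $\E\bigl(\sum_{k\ge1}\widetilde p(S_k)\bigr)<+\infty$, and the argument runs as follows.
\begin{enumerate}[1.]
\item Bound $\widetilde p$ using \Cref{lem:VisitingProbaV1}: the relation $\widetilde p\sim p^{\alpha-1}L(1/p)$, which comes from \eqref{eq:GenAdjoint} and \eqref{eq:GenMuStable}.
\item Combine it with the first-moment bound $p(x)\le g_\theta(x)$. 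This bound is legitimate for the plain tree $\Tree$, whose generations have mean $1$.
\item Together these give $\widetilde p(x)\preceq g_\theta(x)^{\alpha-1}L(g_\theta(x)^{-1})$.
\item Then $\E\sum_k\widetilde p(S_k)\le\sum_x \widetilde p(x)\,g_\theta(x)\preceq\sum_n n^{d-1-\alpha(d-2)}L(n^{d-2})$.
\item At $d=\frac{2\alpha}{\alpha-1}$ one has $d=\alpha(d-2)$ and $d-2=\frac{2}{\alpha-1}$. The last sum is therefore exactly the series of \Cref{Assum:SufficientConditionTransience}, with no delicate reindexing: $L$ simply appears evaluated at $g_\theta(x)^{-1}\asymp\lVert x\rVert^{d-2}$.
\end{enumerate}
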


\begin{remark}
Here again, we may replace the moment assumption on $\theta$ by \Cref{Assum:MomentAssumJumpMarzouk}.
\end{remark}

We stress that \Cref{prop:NoUniversalityAtCriticality} does not cover all the possible situations,
such as when \(L(n)\) converges in \((0,+\infty)\), and we already know
from the case \(\alpha=2\) that the condition \(L(n) \to +\infty\) is
not necessary to get recurrence. To check whether the sufficient
condition \Cref{Assum:SufficientConditionTransience} is necessary, one would need precise estimates
on the visiting probability of a position \(x\) by a \(\Tree\)-walk
starting from \(0\). However, as we will see in the next section with
the notion of branching capacity, we were only able to get such
estimates at the critical dimension \(d=\frac{2\alpha}{\alpha-1}\) when
we already assume \Cref{Assum:SufficientConditionTransience}.

\subsection{Branching capacity for transient stable BRW}\label{capacity-of-transient-stable-brw}

Zhu \autocite{Zhu2017CriticalBranchingRandom} pursued the study of
\(\Tree\)-indexed walk in the transient setting \(d > 4\) when \(\mu\)
has a finite variance \(\sigma_{\mu}^{2} < +\infty\). In particular, he introduced
the notion of branching capacity \(\bcp(A)\) of a finite set
\(A\) and revealed that in some sense, a transient \(\Tree\)-walk behaves
just as a classic random walk \((S_{n})_{n \in \N}\) with the same jump
distribution \(\theta\). One must simply replace the notion of
(Newtonian) capacity associated with \((S_{n})_{n \in \N}\) by the
branching capacity associated with the \(\Tree\)-walk. This notion of branching capacity has been further studied in \autocite{Schapira2023BranchingCapacityRandom,BaiDelmasHu2024BranchingCapacityRandom,Baran2025IntersectionsBranchingRandom,BaiDelmasHu2024BranchingCapacityBrownian,AsselahSchapiraSousi2025DerivativeFormulaCapacities,AsselahOkadaSchapiraSousi2023BranchingRandomWalks} and has application to local times \autocite{AsselahSchapiraSousi2025LocalTimesCapacity,AsselahSchapira2024TimeSpentBall} as well as to a branching variant of Sznitman's random interlacements \autocite{Zhu2018BranchingInterlacementsTreeindexed, Schapira2025NonTrivialityPercolation,ProcacciaZhang2016ConnectivityPropertiesBranching,Vanhaelewyn2026MultipointConnectivityBranching}. Here our extension to the $\alpha$-stable case is essentially based on Zhu's work, but we expect that several results of this literature could be extended as well to this setting.

The first step of this extension is to observe that the definition of
branching capacity actually also makes sense when \(\mu \in \mathrm{dom}(\alpha)\) (where \(\alpha \in (1,2]\)) as soon
as \(d> \frac{2\alpha}{\alpha-1}\), or \(d=\frac{2\alpha}{\alpha-1}\)
and \Cref{Assum:SufficientConditionTransience} holds to ensure that we are in the transient case. In this section we show that in the same sense as earlier a
transient \(\Tree\)-walk behaves as a transient random walk
\((S_{n})_{n \in \N}\). We also show that \(\bcp\) is useful to understand
visiting probabilities for a \(\Tree_{\infty}\)-walk. Finally, we give some estimates on \(\bcp(A)\) for
\(A\) a large ball which stress that the geometry of \(\Tree\) does affect
\(\bcp\).

To make a clear analogy with a transient random walk \((S_{n})_{n}\)
with the same jump distribution \(\theta\), let us give some basic facts
on the classical notion of Newtonian capacity. We refer to
\autocite[Section~6.5]{LawlerLimic2010RandomWalkModern} for more details
in the case of finite-range random walks.

Consider a finite subset \(A \subset \Z^{d}\), and define
\(H_{A} = \inf\{ n \geq 0 : S_{n} \in A\}\) and
\(H_{A}^{+}=\inf \{  n \geq  1 : S_{n} \in A\}\). The capacity of \(A\)
(which implicitly depends on \(\theta\)) is defined as

\begin{equation}\label{eq:newtonianCapacity}{
\mathrm{Cap}(A)=\sum_{a \in A}\P_{a}(H_{A}^{+} = +\infty),
}\end{equation}
where \(\P_{a}\) indicates that the walk starts from \(a\). This quantity
is positive as soon as \((S_{n})_{n}\) is transient, \emph{i.e.}
\(d > 2\). Moreover, it is well known that, under some finite moment
assumption on \(\theta\), the Green function \(g_{\theta}(y,x) = \sum_{n \in \N} \P_y(S_n =x)\) associated
to \((S_{n})_{n}\) satisfies

\begin{equation}\label{eq:GreenEstimates}{
g_{\theta}(x):= g_{\theta}(0,x) \sim_{x \to \infty} \frac{C_{\theta}}{\lVert x \rVert^{d-2} },
}\end{equation}
where
\(C_{\theta} = \frac{\Gamma\left(\frac{d-2}{2} \right)}{2 \sqrt{ \det (\pi\Sigma_{\theta}^{2}) }}\)
and \(\lVert  \cdot \rVert\) is defined from the Euclidian norm
\(\lvert \cdot \rvert\) by
\(\lVert x \rVert = \lvert \Sigma_{\theta}^{-1}x \rvert\) (see for
instance \autocite{LawlerLimic2010RandomWalkModern} where \(\theta\) is
assumed to have a finite moment of order \(d\), or \autocite[Theorem
2]{Uchiyama1998GreensFunctionsRandom} where a finite moment of order
\(d-2\) is required). In the following, we will simply keep the moment
assumption made by Le Gall \& Lin in \autocite{LeGallLin2016RangeTreeindexedRandom} to
ensure transience, namely that \(\theta\) has a finite moment of order
\(d-1\), which is sufficient to have \eqref{eq:GreenEstimates}. When \eqref{eq:GreenEstimates}
holds, \(\mathrm{Cap}(A)\) can be interpreted as the size of \(A\) seen
from a random walker in \(\Z^{d}\) starting far away from the origin
thanks to the following result:

\begin{equation}\label{eq:newtonianCapacityInterpretation}{
\lim_{ x \to \infty } \frac{\P_{x}(H_{A} < +\infty)}{g_{\theta}(x)} = \mathrm{Cap}(A).
}\end{equation}

\subsubsection{Branching capacity as a measure of the visiting probability}\label{branching-capacity-as-a-measure-of-the-visiting-probability}

In the context of \(\Tree\)-walk where $\mu \in \mathrm{dom}(\alpha)$, as explained above the notion of
branching capacity of a finite set \(A \subset \Z^{d}\) defined by Zhu still makes sense and we keep
the same definition. It is a branching equivalent of \eqref{eq:newtonianCapacity} where
the escape event \((H_{A}^{+}=+\infty)\) from \(a \in A\) is replaced by
the event of a \(\Tree_{\infty}\)-walk \(W_{\Tree_{\infty}}^{a}\) (where
the superscript \(a\) indicates the starting position
\(W_{\Tree_{\infty}}^{a}(c_0)=a\)) never coming back to \(A\) in
the future \(\Tree_{+}\). We adopt the notation of
\autocite{AsselahSchapiraSousi2025LocalTimesCapacity}: for any
\(\mathcal{V} \subset \Tree_{\infty}\) we set
\(\mathcal{V}^{x} = \{ W_{\Tree_{\infty}}^{x}(u), u \in \mathcal{V} \}\). In a similar way, for a \(T\)-walk \( W_{T}^{x} \), we set \(T^{x} = \{ W_{T}^{x}(u), u \in T \}\).

\begin{definition}[Branching capacity from escape probabilities]\label{def:BranchingCapacity}

The branching capacity (associated with a \(\Tree\)-walk) of a finite
subset \(A \subset \Z^d\) is defined as

\begin{equation*}{
\bcp(A) = \sum_{a \in A} \P(\Tree_{+}^{a} \cap A = \emptyset).
}\end{equation*}
It implicitly depends on \(\mu\) and \(\theta\).

\end{definition}

From this definition, one can argue as in \autocite[Lemma~2.4]{Schapira2025NonTrivialityPercolation} to get that
\(A \mapsto \bcp(A)\) is still increasing in \(A\) in this
\(\alpha\)-stable setting. As it is also clearly invariant by
translation of \(A\), this branching capacity behaves as a volume, as
long as it is non-zero. This last point is satisfied if and only if the
corresponding \(\Tree_{\infty}\)-walk is transient \emph{i.e.}
\(\P(\esc_{+})>0\), since we clearly have for all \(A \subset \Z^{d}\),
\(\P(\esc_{+})=\bcp(\{ 0 \}) \leq  \bcp(A) \leq  \#A\P(\esc_{+})\). Thus the branching capacity may give us some useful information
about \(\Tree\)-walks only in the transient case, when
\(d > \frac{2\alpha}{\alpha-1}\) or \(d=\frac{2\alpha}{\alpha-1}\) and
\Cref{Assum:SufficientConditionTransience} holds.

Assuming that \(\mu\) has a finite variance,
Zhu established a branching
equivalent of \eqref{eq:newtonianCapacityInterpretation} which gives an interpretation of \(\bcp(A)\)
as the size of \(A\) seen from a \emph{branching walker} in
\(\Z^{d}\) starting far away from the origin. At the same time it tells
us that in this transient setting a branching walker and a classical
walker starting from \(0\) have roughly the same probability of reaching
some distant position \(x \in \Z^{d}\). This actually holds as soon as
$\mu \in \mathrm{dom}(\alpha)$, and we also slightly relax the moment assumption on $\theta$.

\begin{theorem}[Branching capacity and asymptotics of visiting probability]\label{thm:BranchingCapacityVisitingProba}

Consider a \(\Tree\)-walk in \(\Z^{d}\), and assume that 
\begin{enumerate}[(i)]
  \item \(d> \frac{2\alpha}{\alpha-1}\), or \(d=\frac{2\alpha}{\alpha-1}\) and
\Cref{Assum:SufficientConditionTransience} holds;
\item the jump distribution \(\theta\) has a finite
moment of order \(d-1\);
\end{enumerate}

Then for all finite \(A \subset \Z^d\) we have

\begin{equation*}{
 \P(\Tree^{x} \cap A \neq \emptyset) \sim_{x \to \infty} \bcp(A)g_{\theta}(x).
}\end{equation*}

\end{theorem}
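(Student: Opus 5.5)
We follow Zhu's strategy for the finite variance case: decompose the hitting event according to the \emph{last} visit to $A$ and use the invariance of $\Tree_{\infty}$. The key exact identity expresses the hitting probability through a first-moment sum over visits to $A$, each weighted by an escape probability. Write $\P(\Tree^{x}\cap A\neq\emptyset)$ and order the vertices of $\Tree$ by depth-first search (lexicographic order). On the hitting event, let $v$ be the last vertex in this order whose position lies in $A$. Summing over $v$ and over the position $a=W(v)\in A$ gives
\begin{equation*}
\P(\Tree^{x}\cap A\neq\emptyset)=\sum_{a\in A}\E\Bigl[\sum_{v\in\Tree}\1_{W^{x}(v)=a}\,\1_{\{\text{no vertex after } v \text{ visits } A\}}\Bigr].
\end{equation*}
By the many-to-one (spine) decomposition for critical Bienaymé trees, the tree seen from a vertex $v$ at height $k$ is the ancestral line of length $k$, with size-biased siblings split into left (past) and right (future) parts. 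The vertices after $v$ are exactly the descendants of $v$ together with the right siblings along the spine. The displacement along the spine is a random walk $S_k$. Hence the expectation above equals
\begin{equation*}
\sum_{a\in A}\sum_{k\geq0}\P_x(S_k=a)\,\P\bigl(\Tree_{+}^{(k),a}\cap A=\emptyset\bigr),
\end{equation*}
where $\Tree_{+}^{(k)}$ is the future part of a spine truncated at length $k$. As $k\to\infty$ this converges to the future $\Tree_{+}$ of $\Tree_{\infty}$, so $\P(\Tree_{+}^{(k),a}\cap A=\emptyset)$ decreases to $\P(\Tree_{+}^{a}\cap A=\emptyset)$. The same truncated walk appears in the construction of $\Tree_{\infty}$ in \Cref{invariant-trees-and-walks}.

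It then remains to show that the $k$-dependence is negligible. For $x$ large, the Green function sum $\sum_k\P_x(S_k=a)=g_\theta(a-x)\sim g_\theta(x)$ by \eqref{eq:GreenEstimates}, which uses the moment of order $d-1$. Its mass is concentrated on $k\gtrsim\lVert x\rVert^2$: the contribution of $k\le K$ is $o(g_\theta(x))$ for each fixed $K$, by a local CLT or large deviation bound. For $k\ge K$, the escape probability is within $\varepsilon_K\to0$ of its limit. The upper bound is then immediate from monotonicity, since the escape probability of the truncated future is at least that of the full future. The lower bound follows from $\P(\Tree_{+}^{(k),a}\cap A=\emptyset)\to\P(\Tree_{+}^{a}\cap A=\emptyset)$. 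Together these give $\P(\Tree^{x}\cap A\neq\emptyset)\sim g_\theta(x)\sum_a\P(\Tree_{+}^a\cap A=\emptyset)=\bcp(A)g_\theta(x)$.

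The main obstacle is the convergence $\P(\Tree_{+}^{(k),a}\cap A=\emptyset)\to\P(\Tree_{+}^{a}\cap A=\emptyset)$. This means showing that the part of $\Tree_{+}$ grafted on the spine beyond level $k$ (measured from the cursor) returns to $A$ with probability tending to $0$. This is where transience enters, through hypothesis (i). We would bound it by a first moment: the expected number of visits to $A$ by subtrees grafted at spine levels $j\ge k$ is $\sum_{j\ge k}\sum_{y}\P(S_j=y)\,m\,G(y,A)$. Here $m$ is the mean number of size-biased right siblings, and $G$ is the expected number of visits of a $\Tree$-walk, of order $\lVert y\rVert^{2-d}$ up to tree-size factors.

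In the $\alpha$-stable case the size-biased offspring mean is infinite, so a first moment fails. We would instead truncate the number of siblings at spine level $j$ at a level matching $j^{1/(\alpha-1)}$, and control the truncation with the tail \eqref{eq:GenMuStable}. This yields a summable series of the form $\sum_j j^{\frac{1}{\alpha-1}}\,j^{-\frac{d-2}{2}}\,L(\cdot)$, and convergence of this series is exactly condition (i), with \Cref{Assum:SufficientConditionTransience} covering the critical case. This estimate is essentially Le Gall \& Lin's transience criterion, and we would reuse it for this step.
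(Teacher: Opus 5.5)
\textbf{There is a genuine gap.}

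Your first step is sound, and it coincides with the representation the paper uses for the exit vertex $\Omega_{\mathrm{out}}^{A}$. Apply the many-to-one formula to the last vertex $v$ visiting $A$: the right siblings of each strict ancestor have law $\widetilde\mu$. So for a fixed spine path $\gamma:x\to a$ of length $k$, the weight is $s(\gamma)\bigl(1-p^{0}(a,A)\bigr)\prod_{i=0}^{k-1}\bigl(1-\widetilde p^{0}(\gamma(i),A)\bigr)$, and your expectation equals $\bigl(1-p^{0}(a,A)\bigr)g^{0}_{A}(x,a)$.

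The gap is the next identity. You cannot pull out $\P_x(S_k=a)$ and replace the rest by $\P(\Tree_+^{(k),a}\cap A=\emptyset)$:
\begin{itemize}
\item The avoidance probability is a multiplicative functional of the \emph{whole} spine path.
\item That path is pinned at both ends, $\gamma(0)=x$ and $\gamma(k)=a$.
\item By contrast, $\P(\Tree_+^{(k),a}\cap A=\emptyset)$ averages over a free reversed walk of length $k$ from $a$, whose endpoint is not $x$.
\end{itemize}
Your formula becomes correct only after summing over $x$. For fixed $x$ the two sides differ in general: take a nearest-neighbour walk on $\Z$ with all the killing placed between $a$ and $x$. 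With the correct expression, what you must prove is that along a bridge from a distant $x$ to $a$, the killing collected far from $A$ is negligible. You must also show that near $A$ the bridge behaves like a free reversed walk from $a$, so that the killed weight, normalised by $g_\theta(x)$, converges to $\P(\Tree_+^{a}\cap A=\emptyset)$.

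This is exactly the content of the paper's proof, and it is where hypothesis (i) is used:
\begin{itemize}
\item Paths are split at their last step outside $B(0,R)$.
\item Paths jumping into $B(0,R)$ from beyond $R+\varepsilon\lVert x\rVert$ are controlled by \Cref{lem:OvershootLemma1} together with the moment of order $d-1$.
\item The far part is handled by \Cref{lem:GreenFunctionApproximation}, namely $g_A(x,y)/g_\theta(x,y)\to1$ uniformly for $\lVert x\rVert,\lVert y\rVert\ge R$.
\end{itemize}
That lemma bounds $g_\theta(x,y)-g_A(x,y)\le\sum_z\widetilde p(z,A)g_\theta(x,z)g_\theta(z,y)$, with $\widetilde p(z,A)\le\pi(\lVert z\rVert)$ and $\pi$ a $-(\alpha-1)(d-2)$-varying function. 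It needs $\sum_r r\pi(r)<\infty$, which is exactly $(\alpha-1)(d-2)>2$, or \Cref{Assum:SufficientConditionTransience} at $d=\frac{2\alpha}{\alpha-1}$.

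By contrast, the step you call the main obstacle is immediate. The events $\{\Tree_+^{(k),a}\cap A=\emptyset\}$ decrease to $\{\Tree_+^{a}\cap A=\emptyset\}$, so their probabilities converge by continuity, and no truncation argument is needed. As written, your series with factor $j^{1/(\alpha-1)}$ would only converge for $d>\frac{2\alpha}{\alpha-1}+2$; the truncated mean of $\widetilde\mu$ is of order $j^{(2-\alpha)/(\alpha-1)}$ up to slowly varying factors. Finally, monotonicity in $k$ gives the lower bound, not the upper bound.
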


Just as Zhu did in \autocite{Zhu2017CriticalBranchingRandom}, we actually prove
a more precise result, \Cref{prop:AsymptoticsVisitingProba}, which also takes into account the
first and the last vertex of \(\Tree\) (for the depth-first order) that
lies in \(A\). Considering the last one will be linked to \(\Tree_{+}\),
while taking into account the first vertex that lies in \(A\) will be
linked to \(\Tree_{-}\) instead. As a consequence we get that one
can also defines \(\bcp(A)\) with \(\Tree_{-}\) instead of \(\Tree_{+}\)
in \Cref{def:BranchingCapacity}.

\begin{proposition}[branching capacity defined by the past]\label{prop:BranchingCapacityFromPast}

For all finite sets \(A \subset \Z^{d}\),

\begin{equation*}{
\bcp(A):=\sum_{a \in A} \P(\Tree_{+}^{a} \cap A = \emptyset) = \sum_{a \in A} \P(\Tree_{-}^{a} \cap A = \emptyset).
}\end{equation*}

\end{proposition}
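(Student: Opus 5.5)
The plan is to derive the identity from the refined asymptotics of \Cref{prop:AsymptoticsVisitingProba}, which the paper announces just before the statement: it decomposes the hitting event $(\Tree^x\cap A\neq\emptyset)$ according to the \emph{last} vertex of $\Tree$ (in depth-first order) that lands in $A$, and according to the \emph{first} such vertex. Concretely, for each $a\in A$ I expect two asymptotics:
\begin{equation*}
\P\bigl(\text{last visit of } \Tree^x \text{ to } A \text{ is at } a\bigr)\sim_{x\to\infty} g_\theta(x)\,\P(\Tree_{+}^{a}\cap A=\emptyset),
\end{equation*}
\begin{equation*}
\P\bigl(\text{first visit of } \Tree^x \text{ to } A \text{ is at } a\bigr)\sim_{x\to\infty} g_\theta(x)\,\P(\Tree_{-}^{a}\cap A=\emptyset).
\end{equation*}

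Both families of events partition $(\Tree^x\cap A\neq\emptyset)$ as $a$ ranges over $A$. Summing each display over $a\in A$ therefore gives two asymptotic equivalents of the same quantity $\P(\Tree^x\cap A\neq\emptyset)$. Dividing by $g_\theta(x)$ and letting $x\to\infty$ then yields the equality of the two sums. The first sum is $\bcp(A)$ by \Cref{def:BranchingCapacity}, which also recovers \Cref{thm:BranchingCapacityVisitingProba}. This summation step is purely formal once the two displays are in hand.

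The substance lies in the two displays. Following Zhu, I would use a many-to-one / spine decomposition. Summing over vertices $u\in\Tree$ with $W(u)=a$ and re-rooting at $u$, the tree viewed from $u$ consists of an ancestral path to the root together with the subtrees hanging off it to its left and right. Its law is size-biased, and as the spine length $n\to\infty$ it converges locally to $\Tree_\infty$ seen from $c_0$.

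In this picture, the vertices after $u$ in depth-first order form a finite-spine version of $\Tree_+$, and the vertices before $u$ form a finite-spine version of $\Tree_-$. The walk along the spine from the root at $x$ to $u$ contributes $\sum_n \P_x(S_n=a)=g_\theta(a-x)\sim g_\theta(x)$, using criticality so that the expected number of generation-$n$ vertices is $1$. Requiring $u$ to be the last (respectively first) visit translates into the future (respectively past) part avoiding $A$. This produces the two displays, provided the finite-spine avoidance probabilities converge to their $\Tree_\infty$ counterparts.

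The main obstacle is this last convergence. The dominant contribution to $g_\theta(x)$ comes from spine lengths of order $|x|^2$, and we need the conditional probability that the truncated future or past avoids $A$ to converge to the $\Tree_\pm$ probability uniformly over such long spines. In particular, the contribution of the far part of the spine, near the root at $x$, must vanish.

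I would handle this using transience (assumption (i)). The expected number of visits to $A$ by subtrees grafted at spine distance larger than $K$ from $u$ is bounded by a tail sum that tends to $0$ as $K\to\infty$, uniformly in the spine length. This should be the same estimate as in the proof of finiteness of the visits in \Cref{prop:DichotomyRecTransienceAdlous}(ii), using the $\alpha$-stable tail of the size-biased offspring on the spine together with \eqref{eq:GreenEstimates}.
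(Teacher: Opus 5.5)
Your reduction is exactly the paper's proof. The statement follows by summing over $a\in A$ the first- and last-passage asymptotics of \Cref{prop:AsymptoticsVisitingProba}, since each family of events partitions $(\Tree^x\cap A\neq\emptyset)$; with that proposition granted, nothing more is needed.

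The gap is in your sketch of the two displays, at the step you single out as the key one. A bound on the \emph{expected number of visits} to $A$ by the bushes grafted beyond spine distance $K$ cannot work in this setting. Each such bush is an adjoint tree whose root has mean offspring $\sum_{l}l\,\widetilde\mu(l)=\sigma_\mu^2/2$. So as soon as $\sigma_\mu^2=+\infty$, a single bush already has an infinite expected number of vertices in $A$ (whenever it can reach $A$ at all), and your tail sum is $+\infty$ for every $K$. This is exactly why the paper replaces the natural $\Tree_-$ Green function by $G$; vertex-level first moments only work in Zhu's finite-variance setting.

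The control has to be done at the level of bushes. Bound the probability that some bush beyond level $K$ hits $A$ by a sum of $\widetilde p(\cdot,A)$ along the spine. Then feed in the generating-function estimate $\widetilde p(z,A)\preceq g_\theta(z)^{\alpha-1}L(g_\theta(z)^{-1})$ of \Cref{lem:FirstEstimatesVisitingProba}. Any offspring-tail bound on the spine is the wrong input, since the size-biased offspring $\mu'$ also has infinite mean. Also, the proof of \Cref{prop:DichotomyRecTransienceAdlous}(ii) contains no such estimate; it is a Hewitt--Savage argument. The relevant finiteness is the Le Gall--Lin bound $\E\sum_k\widetilde p(S_k)<\infty$ used in the proof of \Cref{prop:NoUniversalityAtCriticality}.

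Even after this fix, ``uniformly in the spine length'' is not the right target, because the spine is weighted by the event that it runs from $x$ to $a$. Summing over lengths, you need the late-spine-time contribution $\sum_z \widetilde p(z,A)\,g_\theta(x,z)\,g^{(>K)}_\theta(z,a)$, with $g^{(>K)}_\theta(z,a)=\sum_{k>K}\P_z(S_k=a)$, to be at most $\varepsilon_K\,g_\theta(x)$ for all large $x$, with $\varepsilon_K\to0$. This is the convolution estimate behind \Cref{lem:GreenFunctionApproximation}. It goes through \Cref{claim:pi}, which is where $(\alpha-1)(d-2)>2$, or \Cref{Assum:SufficientConditionTransience} at criticality, enters.

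You also need to decorrelate the avoidance event near $a$ from the far part of the spine path. Under only a $(d-1)$-moment assumption on $\theta$, this requires a big-jump control such as \Cref{lem:OvershootLemma1}.

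Once the bushes are integrated out, your many-to-one expression is exactly Zhu's $g_A(x,a)$ from \Cref{prop:ZhuKilledWalk}. The paper then truncates spatially (last exit of the spine from $B(0,R)$) and uses $g_A/g_\theta\to1$. You instead truncate genealogically at spine level $K$. That alternative is viable, but only once the key estimate is redone with bush-hitting probabilities and the convolution bound above.
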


The proof of \Cref{prop:AsymptoticsVisitingProba}, which directly gives \Cref{thm:BranchingCapacityVisitingProba} and
\Cref{prop:BranchingCapacityFromPast}, is still based on
Zhu's crucial observation that $\P(\Tree^{x} \cap A \neq \emptyset)$ may be rewritten as the Green
function of a classical random walk with some killing rate
related to a special tree-indexed walk. The fact that \(\mu\) no longer
has a finite variance mainly impacts this killing rate and thus the
comparison between this special Green function and \(g_{\theta}\). While we believe that Zhu's method could still work in this setting, we propose a seemingly more straightforward way to compare the special Green function and \(g_{\theta}\) (see \Cref{lem:GreenFunctionApproximation} and its proof). 

\subsubsection{Branching capacity and infinite tree indexed walk}\label{branching-capacity-and-infinite-tree-indexed-walk}

We can also link the branching capacity to visiting probabilities of a
\(\Tree_{\infty}\)-walk. First we introduce

\begin{equation}\label{eq:AlternativeGreenFunction}{
G(x) = \E(\#\{ k \geq 1: \exists u \in \mathcal{ B}_{k,-} \text{ such that } W_{\Tree_{\infty}}^{0}(u) = x \}),
}\end{equation}
where \(\mathcal{B}_{k,-}\) is the finite subtree (called a \emph{bush})
of \(\Tree_{-}\) grafted to the \(k^{\text{th}}\) vertex \(c_{-k}\) on
the spine of \(\Tree_{\infty}\). Our aim is to replace the more natural notion
of Green function of a \(\Tree_{-}\)-indexed random walk
\(\E(\# \{  u \in \Tree_{-}: W_{\Tree_{\infty}}^{0}(u)=x \})\), as this
last expectation is infinite as soon as \(\sigma_{\mu}^{2} =+\infty\).
From \Cref{thm:BranchingCapacityVisitingProba}, we can get a precise estimate for \(G\), which still
depends on \(\mu\) through the slowly varying function \(L\).

\begin{proposition}\label{prop:AlternativeGreenFunction}

Assume that \(d> \frac{2\alpha}{\alpha-1}\) and \(\theta\) has a finite
moment of order \(d-1\), then we have

\begin{equation*}{
G(x) \sim_{x \to \infty} C_{\mu,\theta} \lVert x \rVert^{d} g_{\theta}(x)^{\alpha}L(g_{\theta}(x)^{-1}),
}\end{equation*}
where \(C_{\mu,\theta} = \bcp(\{ 0 \})^{\alpha-1}\times  \int_{\R^d} \frac{1}{\Vert z\Vert^{d-2}} \frac{1}{\Vert u-z\Vert^{(\alpha-1)(d-2)}}\dd z\)
with \(u\) any unitary vector (for \(\lVert \cdot \rVert\)). In particular, \(G\) is \((2\alpha-d(\alpha-1))\)-varying.

\end{proposition}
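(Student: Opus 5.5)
We first rewrite $G(x)$ through the structure of $\Tree_\infty$ along the spine. Conditionally on the spine, the spine vertex $c_{-k}$ sits at the position $S_k$ of a random walk with $\theta$-jumps started from $0$, read backwards. The bushes $\mathcal B_{k,-}$ are independent, and each is obtained by grafting to $c_{-k}$ a size-biased-type number of independent $\Tree$-copies, namely those siblings lying on the left (past) side. Hence
\begin{equation*}
G(x)=\sum_{k\geq 1}\E\bigl[\P(\mathcal B_{k,-}^{S_k}\ni x\mid S_k)\bigr]
=\sum_{k\geq1}\sum_{y}\P(S_k=y)\,q(x-y),
\end{equation*}
where $q(z)$ is the probability that a bush walk started at $0$ hits $z$. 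If the spine vertex has $N$ past children, each rooting an independent $\Tree$-walk, then
\begin{equation*}
q(z)=1-\E\bigl[(1-\P(\Tree^{\cdot}\ni z \text{ via one child}))^{N}\bigr].
\end{equation*}
Write $p(z)=\P(\Tree^{z}\cap\{0\}\neq\emptyset)$, and set $p'(z)=\sum_y\theta(y)p(z-y)$ for the single-child version. The law of $N$ (the past children of a spine vertex of $\Tree_\infty$) has generating function expressed through $\gen_\mu'$: typically $\E[s^N]=(1-\gen_\mu(s))/(1-s)$, or a similar formula. By \eqref{eq:GenMuStable},
\begin{equation*}
1-\E[(1-t)^N]=\frac{t-(\gen_\mu(1-t)-(1-t))}{t}\cdot t\Big/ t \;\sim\; t^{\alpha-1}L(1/t)
\end{equation*}
as $t\to0$, using $1-\gen_\mu(1-t)=t-t^\alpha L(1/t)$. (I would verify the exact bush law from \Cref{invariant-trees-and-walks}; only this $t^{\alpha-1}L(1/t)$ behaviour matters.) Thus $q(z)\sim p'(z)^{\alpha-1}L(1/p'(z))$ as $z\to\infty$.

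Next, \Cref{thm:BranchingCapacityVisitingProba} with $A=\{0\}$ gives $p(z)\sim\bcp(\{0\})g_\theta(z)$, and therefore $p'(z)\sim\bcp(\{0\})g_\theta(z)$ as well. Combining this with \eqref{eq:GreenEstimates} and with $\sum_k\P(S_k=y)=g_\theta(y)$ (apart from the $k=0$ term, which is harmless), we obtain
\begin{equation*}
G(x)=\sum_y g_\theta(y)\,q(x-y),\qquad q(z)\sim \bcp(\{0\})^{\alpha-1}\,C_\theta^{\alpha-1}\lVert z\rVert^{-(\alpha-1)(d-2)}L(\lVert z\rVert^{d-2}).
\end{equation*}
This is a discrete convolution of two regularly varying kernels. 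The exponents satisfy $(d-2)+(\alpha-1)(d-2)=\alpha(d-2)>d$, which is equivalent to $d>\frac{2\alpha}{\alpha-1}$. Also $(\alpha-1)(d-2)<d$, since $d-2<\frac{d}{\alpha-1}$ holds for $\alpha\le2$. So both singularities are locally integrable, while the tail decays fast enough.

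The convolution is then dominated by the scale $\lVert y\rVert\asymp\lVert x\rVert$. Substituting $y=\lVert x\rVert z$ and using uniform convergence of slowly varying functions (Potter bounds) turns the sum into a Riemann sum, giving
\begin{equation*}
G(x)\sim \bcp(\{0\})^{\alpha-1}C_\theta^{\alpha}\lVert x\rVert^{d-\alpha(d-2)}L(\lVert x\rVert^{d-2})\int_{\R^d}\frac{\dd z}{\lVert z\rVert^{d-2}\lVert u-z\rVert^{(\alpha-1)(d-2)}}.
\end{equation*}
This is precisely $C_{\mu,\theta}\lVert x\rVert^dg_\theta(x)^\alpha L(g_\theta(x)^{-1})$, after absorbing constants into $L$ by slow variation, and possibly a Jacobian factor from $\Sigma_\theta$ that is consistent with the paper's normalisation. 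The exponent $d-\alpha(d-2)=2\alpha-d(\alpha-1)$ gives the claimed regular variation.

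\textbf{Main obstacle.} The delicate step is justifying the Riemann-sum limit. Near $y=0$ and near $y=x$, only the crude bounds $g_\theta\le C(1\wedge\lVert y\rVert^{2-d})$ and $q(z)\le C\lVert z\rVert^{-(\alpha-1)(d-2)+\varepsilon}$ are available (Potter), and we must show that these regions contribute $o$ of the main term. Local integrability with an $\varepsilon$ margin handles this. At infinity, the tail $\sum_{\lVert y\rVert>M\lVert x\rVert}$ is controlled by the strict inequality $\alpha(d-2)>d$, again with Potter's $\varepsilon$. The second subtle point is the exact identification of the bush offspring law. I would derive it from the construction of $\Tree_\infty$, and check that the factor of $\alpha$ (from $\gen_\mu'$) or the factor $1$ (from the tail-sum formula) is consistent with the stated constant.
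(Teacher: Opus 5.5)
Your proposal is correct and is essentially the paper's proof: the spine decomposition gives $G(x)=(g_{\bar\theta}*\widetilde p)(x)-\widetilde p(x)$, the asymptotics $\widetilde p(z)\sim\bcp(\{0\})^{\alpha-1}g_\theta(z)^{\alpha-1}L(g_\theta(z)^{-1})$ come from \Cref{thm:BranchingCapacityVisitingProba} together with the adjoint generating function \eqref{eq:GenAdjoint} (this is \Cref{lem:FirstEstimatesVisitingProba}), and your Riemann-sum/Potter argument is exactly \Cref{lem:ConvolutionRegular} (Lebesgue measure with $\lVert\cdot\rVert$-rotations of unit Jacobian, so no extra $\Sigma_\theta$ factor appears). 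To settle your hedges: $\mathcal{B}_{k,-}$ is exactly an adjoint tree $\widetilde{\Tree}$, so no factor $\alpha$ appears (that factor belongs to the two-sided bush $\mathcal{B}_k$, cf.\ \Cref{lem:VisitingProbaV1}); the spine positions are $\bar S_k$ rather than $S_k$; and your display actually computes $\E[(1-t)^N]=1-t^{\alpha-1}L(1/t)$ rather than its complement --- none of these affects the conclusion.
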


By decomposing \(\Tree_{-}\) and re-using the Green function
approximation needed for \Cref{prop:AsymptoticsVisitingProba}, it turns out that \(G\) gives
the correct asymptotic for the visiting probability of a
\(\Tree_{\infty}\)-walk, as stated in the following result.

\begin{theorem}\label{thm:VisitingProbabilityInfinite}

With the assumptions of \Cref{prop:AlternativeGreenFunction}, for all finite \(A \subset \Z^d\),
as \(\lVert x \rVert\to +\infty\) we have

\begin{equation*}{
 \P(\Tree_{+}^{x} \cap A \neq \emptyset) \sim \P(\Tree_{-}^{x} \cap A \neq \emptyset) \sim \left(\frac{\bcp(A)}{\bcp(\{ 0 \})}\right)^{\alpha-1} G(x).
}\end{equation*}

\end{theorem}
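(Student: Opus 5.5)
We decompose $\Tree_{-}$ (and likewise $\Tree_{+}$) along the spine into the bushes $\mathcal B_{k,-}$ grafted at $c_{-k}$, $k\geq 1$, together with the spine vertex itself. Conditionally on the spine positions, which form a random walk $(S_{-k})_{k\ge0}$ with $\theta$-jumps started at $x$ and run backwards, the bushes are independent. By the description of $\Tree_\infty$ as a local limit, each bush is a forest of independent $\mu$-Bienaymé trees whose number of roots follows a size-biased law restricted to one side of the spine. Writing $q_A(y)=\P(\Tree^{y}\cap A\neq\emptyset)$, we get
\begin{equation*}
\P(\Tree_{-}^{x}\cap A=\emptyset)=\E\Bigl[\prod_{k\geq 1} \Phi\bigl(1-q_A(S_{-k})\bigr)\Bigr],
\end{equation*}
up to the harmless spine-vertex indicators. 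Here $\Phi$ is the generating function of the number of one-sided children on the spine, which can be written in terms of $\gen_\mu'$ and a difference quotient of $\gen_\mu$.

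By \eqref{eq:GenMuStable}, $1-\Phi(1-s)\sim c\,s^{\alpha-1}L(1/s)$ as $s\to0$. The constant $c$ should be the same for the past and the future, and the equality of these two constants is what makes the two asymptotics in the statement coincide. Taking complements, $\P(\Tree_{-}^{x}\cap A\neq\emptyset)$ is the probability that a random walk from $x$ is killed, with killing rate $1-\Phi(1-q_A(y))$ at site $y$. In the same way, $G(x)$ is the analogous quantity with $A=\{0\}$ in the linearised form: by linearity of expectation it equals $\sum_y g_\theta(x,y)\,h(y)$, where $h(y)$ is the expected number of bushes at a spine site $y$ that hit $0$, and $h(y)\sim c\, q_{\{0\}}(y)^{\alpha-1}L(q_{\{0\}}(y)^{-1})$.

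The key steps are as follows.

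\textbf{(1)} Using \Cref{thm:BranchingCapacityVisitingProba}, $q_A(y)\sim \bcp(A)g_\theta(y)$. Hence the killing rate satisfies $k_A(y)\sim c\,\bcp(A)^{\alpha-1}g_\theta(y)^{\alpha-1}L(g_\theta(y)^{-1})$. Since $L$ is slowly varying, this is $\bigl(\bcp(A)/\bcp(\{0\})\bigr)^{\alpha-1}k_{\{0\}}(y)\,(1+o(1))$ as $y\to\infty$.

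\textbf{(2)} Show that the killing probability from far-away $x$ is asymptotic to the first-order term $\sum_y g_\theta(x,y)k_A(y)$. This is where transience enters: $d>\frac{2\alpha}{\alpha-1}$ means $k_A$ decays like $\|y\|^{-(\alpha-1)(d-2)}$ (up to $L$), which is summable against $g_\theta$. The higher-order terms, i.e. being killed twice or correction terms $\sum g\,k\,g\,k$, are of smaller order because the walk starting near the origin has bounded expected total killing, while the walk from $x$ must first travel to the relevant region. This is the same Green-function comparison used for \Cref{prop:AsymptoticsVisitingProba} (\Cref{lem:GreenFunctionApproximation}), and we reuse it.

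\textbf{(3)} Check that the sum in step (2) is dominated by $y$ at distance of order $\|x\|$. By the scaling computation behind \Cref{prop:AlternativeGreenFunction} ($G$ is regularly varying with the integral constant), contributions from bounded $y$, or from $\|y\|\gg\|x\|$, are negligible. The error factor $(1+o(1))$ from step (1) is uniform for $\|y\|\ge\varepsilon\|x\|$. Therefore
\begin{equation*}
\P(\Tree_{-}^x\cap A\neq\emptyset)\sim \bigl(\bcp(A)/\bcp(\{0\})\bigr)^{\alpha-1}G(x).
\end{equation*}
The same argument with the future bushes gives the result for $\Tree_+$.

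I expect step (2) to be the main obstacle. The event is a hitting by some bush, not a sum, so one must control the nonlinearity $1-\prod(1-p_k)$ versus $\sum p_k$ uniformly. I would first try the bound $\sum p_k-\tfrac12(\sum p_k)^2\le 1-\prod(1-p_k)\le\sum p_k$. I would then show $\E[(\sum_k p_k)^2]=o(G(x))$ by splitting according to whether the spine walk comes within distance $\varepsilon\|x\|$ of the origin. The quadratic term is small off that event because $p_k$ is small there, while reaching that region costs a factor of order $G(x)$ times a quantity bounded uniformly.
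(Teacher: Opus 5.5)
Your proposal is correct and follows essentially the paper's route. Conditionally on the spine, the one-sided bushes are independent adjoint trees, so up to spine and cursor-bush terms of order $g_\theta(x)$, $\P(\Tree_{\pm}^x\cap A\neq\emptyset)$ is the killing probability $\sum_y \bar g_A(x,y)\widetilde p(y,A)$; this is compared to $(\widetilde p(\cdot,A)*g_\theta)(x)$ via \Cref{lem:GreenFunctionApproximation}, and then to $G(x)$ via \eqref{eq:AsymptoticAdjointVisitingProba} and \Cref{lem:ConvolutionRegular}. The paper differs only cosmetically: it first reduces to an auxiliary tree $\Tree_*$ with an adjoint bush at every spine vertex, and it writes the hitting probability exactly via the first bush that hits $A$, which makes your Bonferroni/second-moment detour unnecessary.
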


\subsubsection{\texorpdfstring{\(\alpha\)-stable branching capacity of balls}{\textbackslash alpha-stable branching capacity of balls}}\label{alpha-stable-branching-capacity-of-balls}

Finally, for practical uses of the branching capacity, one needs some
estimates on \(\bcp(A)\) for sets \(A\) of interest. In
\autocite{Zhu2017CriticalBranchingRandom}, Zhu gave such estimates for large
\(m\)-dimensional ball into \(\Z^{d}\) when \(\mu\) has a finite
variance, however his approach relies on a second-moment method which 
cannot be extended to the case \(\sigma_{\mu}^{2} = +\infty\). We propose a
different approach to get some estimates, and based on the precise
estimate \eqref{eq:QueueMaxStable} of the tail of the maximum of a one-dimensional
\(\Tree\)-walk we are able to get the order of the branching capacity of
a large \(d\)-dimensional ball in \(\Z^{d}\) for \(d>\frac{2\alpha}{\alpha-1}\). Interestingly, we also get that the branching capacity of this ball may simply be expressed as the Newtonian
capacity \(\mathrm{Cap}(B(0,R))\), but with a penalization coming from the
underlying tree given by \(\P(H(\Tree)>R^{2})\).

Note that the order of \(\mathrm{Cap}(B(0,R))\) is well known (see \emph{e.g.} \autocite[Proposition
6.5.2]{LawlerLimic2010RandomWalkModern}):

\begin{equation}\label{eq:NewtonianCapBall}{
\mathrm{Cap}(B(0,R)) \asymp R^{d-2},
}\end{equation}
and the asymptotics of
\(n \mapsto \P(H(\Tree)>n)\) has been established by Slack
\autocite{Slack1968BranchingProcessMean} and will be recalled at \eqref{eq:SlackResult}.

\begin{theorem}\label{thm:BranchingCapBall}

Consider a \(\Tree\)-walk in \(\Z^{d}\). We assume that there is
\(\delta > d\) such that \(\theta\) has a finite moment of order
\(\delta\). Then for \(d> \frac{2\alpha}{\alpha-1}\), we have
\begin{equation*}{
\bcp(B(0,R)) \asymp \mathrm{Cap}(B(0,R))\times  \P(H(\Tree) > R^{2}).
}\end{equation*}

\end{theorem}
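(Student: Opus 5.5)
We plan to go through the hitting probability of $B_R:=B(0,R)$ by a $\Tree$-walk started far away, using \Cref{thm:BranchingCapacityVisitingProba}. For $\lVert x\rVert\to\infty$ with $R$ fixed,
\[
\bcp(B_R)=\lim_{x\to\infty}\frac{\P(\Tree^{x}\cap B_R\neq\emptyset)}{g_\theta(x)}.
\]
It is therefore enough to show that, uniformly in large $R$ and for $x$ large enough (depending on $R$),
\[
\P(\Tree^{x}\cap B_R\neq\emptyset)\asymp g_\theta(x)\,R^{d-2}\,\P(H(\Tree)>R^2).
\]
The last-exit decomposition used for \Cref{prop:AsymptoticsVisitingProba} rewrites this hitting probability as a Green function of a random walk killed at a rate tied to a special tree-indexed walk. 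Summing over the possible first vertex $v$ of $\Tree$ (in depth-first order) that lands in $B_R$ gives, roughly,
\[
\P(\Tree^{x}\cap B_R\neq\emptyset)=\sum_{a\in B_R}\ \sum_{y}\ g^{\text{kill}}(x,y)\,\cdots\,\P\bigl(\text{escape from } B_R \text{ started at } a\bigr),
\]
where $g^{\text{kill}}(x,y)\approx g_\theta(x)$ for $y$ near $B_R$ (this is \Cref{lem:GreenFunctionApproximation}). The proof thus reduces to estimating the sum of escape probabilities $\sum_{a\in B_R}\P(\Tree_{+}^{a}\cap B_R=\emptyset)$ directly from \Cref{def:BranchingCapacity}.

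\textbf{Upper bound.} Only points $a$ within distance about $R$ of the boundary contribute comparably, as for Newtonian capacity. For the future $\Tree_+^a$ to avoid $B_R$, every bush grafted on the spine must avoid $B_R$. The spine itself is a random walk path inside the invariant tree, and it re-enters $B_R$ often. The key point is that a bush of height $h\gg R^2$ hosts a tree-indexed walk whose spread exceeds $R$, and it hits $B_R$ with probability bounded below whenever its root is within distance $O(R)$. Here we use \eqref{eq:QueueMaxStable}, the tail of the maximum of a one-dimensional $\Tree$-walk, projected on a coordinate. As a result the escape probability from $a$ behaves like (Newtonian escape from $B_R$) $\times$ P(no large bush along the $\sim R^2$ spine steps near the ball). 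Balancing gives the upper bound $R^{d-2}\P(H(\Tree)>R^2)$ for the total, after noting that $R^{d-2}$ escape mass carries a factor $R^{-2}$ per boundary point and that the density of bushes of height $>R^2$ per spine step is $\asymp \P(H(\Tree)>R^2)\cdot R^2$ by size-biasing (Slack's asymptotics \eqref{eq:SlackResult}).

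\textbf{Lower bound.} We use a second-moment-free argument. Restrict to the event that the starting walker lands on a single vertex near $\partial B_R$, that it carries one bush of height of order $R^2$, and that the walk otherwise escapes. A first-moment computation on this event, comparing via \eqref{eq:GreenEstimates} with $\mathrm{Cap}(B_R)\asymp R^{d-2}$ from \eqref{eq:NewtonianCapBall}, gives the matching order. The condition $d>\frac{2\alpha}{\alpha-1}$ together with the moment condition $\delta>d$ ensures that contributions from far-away bushes are summable, so that they neither destroy escape (upper bound) nor are needed (lower bound).

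\textbf{Main obstacle.} The hardest step is the upper bound on the escape probability: showing that the many small bushes along the spine near the ball do not collectively force a hit, while a single bush of height $\gtrsim R^2$ does so with probability bounded below. We would first try to condition on the heights of the bushes and apply \eqref{eq:QueueMaxStable} uniformly, controlling the small bushes by a union bound using the transience criterion at scale $R$.
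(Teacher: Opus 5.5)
Your proposal has a genuine gap. The central estimate is never reduced to anything tractable, and the heuristic you give for it produces the wrong order.

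\textbf{The reduction is circular.} After correctly passing to $\P(\Tree^{x}\cap B(0,R)\neq\emptyset)/g_\theta(x)$, you use the last-exit decomposition to go back to estimating $\sum_{a\in B(0,R)}\P(\Tree_{+}^{a}\cap B(0,R)=\emptyset)$. That sum is just the definition of $\bcp(B(0,R))$, so the detour buys nothing. Your appeal to \Cref{lem:GreenFunctionApproximation} is also not legitimate. That lemma is for a fixed finite $A$ and is not uniform in $A=B(0,R)$. Within distance $O(R)$ of a ball of radius $R$, the killing $\widetilde{p}_R$ is at least of order $R^{-2}$, and the walk spends of order $R^{2}$ steps there. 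So the killed Green function is not $\approx g_\theta$ there.

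\textbf{The upper-bound mechanism is quantitatively wrong.} A future bush is an adjoint tree, and by \eqref{eq:GenMuStable} and \eqref{eq:SlackResult},
$\P(H(\widetilde{\Tree})>R^{2})=\Phi\bigl(\P(H(\Tree)\geq R^{2})\bigr)\sim \frac{R^{-2}}{\alpha-1}$, where $\Phi(s)=s^{-1}\bigl(\gen_\mu(1-s)-(1-s)\bigr)$. It is not $R^{2}\P(H(\Tree)>R^{2})$. Hence over $\asymp R^{2}$ spine steps near the ball, ``no large bush'' has probability of order one and produces no penalty at all. Also, the Newtonian escape probability of a boundary point is on average of order $R^{-1}$, not $R^{-2}$.

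Heuristically, the true extra factor $\P(H(\Tree)>R^{2})$ in the escape probabilities comes from killing at rate $\asymp t^{-2}$ at every distance $t\in[1,R]$ from the sphere, accumulated over all dyadic scales. This multiscale estimate is the hard part, and you do not provide it.

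\textbf{The lower bound is missing its key input.} Your sketch mixes hitting and escaping: a bush of height $R^{2}$ near the ball helps hitting, not escaping. In either reading it lacks the one nontrivial ingredient, namely that the killed spinal walk reaches a neighbourhood of the ball with probability comparable to the free walk.

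\textbf{How the paper avoids this.} It never touches escape probabilities. It proves \Cref{prop:BranchingHittingProbalargeBall} for $p_R(x)$ and then reads off $\bcp(B(0,R))$ via \Cref{thm:BranchingCapacityVisitingProba}. Using \Cref{prop:ZhuKilledWalk}, it splits the spinal path at its first entrance into a larger ball.

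For the upper bound it uses $B(0,4R)$:
\begin{itemize}
\item It drops the killing, so this part is at most $\P_x(H_{B(0,4R)}<+\infty)\asymp R^{d-2}g_\theta(x)$.
\item It bounds $p_R(z)\leq C\,\P(H(\Tree)>R^{2})$ for $2R<\lVert z\rVert<4R$ using the one-dimensional maximum tail \eqref{eq:QueueMaxStable}.
\item It discards direct jumps into $B(0,2R)$ with \Cref{lem:OvershootLemma2}; this is where $\delta>d$ is used.
\end{itemize}

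For the lower bound it uses $B(0,\lambda R)$:
\begin{itemize}
\item It gets $p_R(z)\geq c\,\P(H(\Tree)>R^{2})$ from a single vertex at height $R^{2}$ and the local CLT.
\item It shows the killed walk outside $B(0,\lambda R)$ survives with probability bounded below. This uses $\widetilde{p}_R(y)\leq C(R/\lVert y\rVert)^{\beta}R^{-2}$ with $\beta>2$ (\Cref{lem:UniformBoundAjointVisitingProba}, itself deduced from the upper bound), together with the $O((2^{i}R)^{2})$ expected time spent in dyadic annuli.
\item The choice $\beta>2$ is possible exactly because $d>\frac{2\alpha}{\alpha-1}$.
\end{itemize}

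The whole penalty thus comes from a single scale-$R$ estimate, and no analysis close to the sphere is needed.
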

We stress that the right-hand side of \Cref{thm:BranchingCapBall} is bounded from above and below by some
\(\left(d-\frac{2\alpha}{\alpha-1} \right)\)-varying functions of \(R\). 

This points out that in the $\alpha$-stable setting, $\bcp$ could be comparable to a Bessel-Riesz capacity (see \autocite{AsselahSchapiraSousi2025DerivativeFormulaCapacities} for a definition) with a kernel being a \(\left(\frac{2\alpha}{\alpha-1}-d \right)\)-varying function, however such result would require more than the estimate of $\bcp$ for balls.

\subsection{Outline}\label{outline}

In \Cref{branching-random-walk-and-invariant-trees}, we first recall some
facts about \(\alpha\)-stable BRW and the associated invariant
walk, and then we prove that there is a
dichotomy recurrence/transience for this invariant walk. \Cref{recurrence-and-transience-of-stable-brw-1} is dedicated to the recurrence criterion in low dimension and its applications, including the case of critical dimension. Finally, \Cref{capacity-of-transient-stable-brw-1} deals with the study of
branching capacity in transient dimension. We first establish the relations between branching capacity and visiting probabilities, and then we prove the estimate for branching capacity of balls.
We also provide \Cref{annexe-regularly-varying-functions} to recall all
the properties of regularly varying function required in this paper.

\begin{table}[htbp]
\caption{The mostly standard notation used in this paper is recalled here.}
\centering
\begin{tabular}{c p{10cm}}
\toprule
$\N = \{0,1,\ldots\}$ \text{ and } $\N^*=\N\!\setminus\!\{0\}$ & sets of natural integers and positive integers;\\
  $\lvert x \rvert$ & Euclidian norm (or absolute value) of $x \in \Z^d$;\\
  $\lVert x \rVert = \lvert \Sigma_{\theta}^{-1}x \rvert $ & norm adapted to walks with $\theta$-jumps;\\
 \(C,C',c,c',\ldots\) & generic constants with respect to some parameters which will be specified when they are not immediately clear from the context;\\
 \(f(x) \sim g(x)\) & asymptotic equivalence, which essentially means \(f(x)/g(x) \to 1\) as \(x\) goes to a limit point. if unspecified, this limit point is $+\infty$;\\
 \(f(x) \preceq g(x)\) & $g$ dominates $f$, \textit{i.e.} there is $C>0$ such that $f(x) \leq Cg(x)$ for large $x$;\\
 \(f(x) \asymp g(x)\) & $f$ and $g$ have the same order, \textit{i.e.} \(f(x) \preceq g(x)\) and \(g(x) \preceq f(x)\);\\
 $\xrightarrow[]{(\P)}$ & convergence in probability;\\
  $\xrightarrow[]{(d)}$ & convergence in distribution;\\
 \bottomrule
\end{tabular}
\label{tab:standard notation}
\end{table}

\paragraph{Acknowledgement.} I thank Igor Kortchemski and Bruno Schapira for their careful readings and their help for several computations, which notably shortened the proof of \Cref{lem:GreenFunctionApproximation}. I also owe many thanks to Perla Sousi for sharing her lecture note on Zhu's branching capacity in the finite variance setup.

\section{Branching random walk and invariant trees}\label{branching-random-walk-and-invariant-trees}

\subsection{Framework}\label{framework}

\subsubsection{Trees and tree-indexed walks}\label{tree-indexed-walk-and-brw}

In this paper, trees are ordered and rooted. A classical formalism for such trees is to see them as subtrees of Ulam-Harris' tree, see \textit{e.g.} \cite{LeGall2005RandomTreesApplications} for more details, however we will need a slight extension of this framework. Ulam-Harris' formalism is well suited to define a $\mu$-Bienaymé tree as a random tree such that every $u \in \Tree$ has a number of children distributed according to $\mu$ and independent of everything else. We can also define its size-conditioned version $\Tree_{n}$ as $\Tree$ conditioned to have $n+1$ vertices. However, Ulam-Harris' formalism does not fit our point of view for the invariant tree $\Tree_{\infty}$, because in this tree the distinguished vertex is not the common ancestor of the whole genealogy represented by this tree. A suitable formalism for such point of view is provided by Stufler in \cite{Stufler2019LocalLimitsLarge}, but here we choose to make an informal definition rather than introducing the whole formalism. 

As depicted in \Cref{fig:pointedtree}, we see a tree $T$ as a \emph{pointed genealogical tree}. First, the edges of the tree are oriented (from a parent to a child) and there is at most one vertex in $T$ with in-degree~$0$ while all other vertices of $T$ have in-degree~$1$. The vertex with in-degree~$0$, if it exists, is the root, \textit{i.e.} the common ancestor of the genealogy. Then, our tree is ordered in the sense that every vertex $u \in T$  has a first child, a second child, $\dots$ until its last child (if any). Finally, we say that $T$ is a pointed tree when it has a distinguished vertex, called its \textit{cursor} and denoted by $c_0$. When we consider a classical plane tree such as $\Tree$ or $\Tree_n$ without choosing a distinguished point, by convention we may see them as pointed genealogical tree by setting the cursor to be the root \textit{i.e.} $c_0=\varnothing$.

\begin{figure}[h!]

\begin{center}
\begin{tikzpicture}[scale=0.1]
\tikzstyle{every node}+=[inner sep=0pt]
\draw [black] (36.7,-18.2) circle (3);
\draw (36.7,-18.2) node {\(c_0\)};

\draw [black] (36.7,-31.3) circle (3);

\draw [black] (36.7,-45.3) circle (3);

\draw [black] (30,-5.8) circle (3);
\draw [black] (44.2,-5.8) circle (3);
\draw [black] (23,-18.3) circle (3);
\draw [black] (48.9,-18.6) circle (3);
\draw [black] (60.5,-18.7) circle (3);
\draw [black] (48.9,-31.3) circle (3);
\draw [black] (60.5,-5.8) circle (3);
\draw [black] (36.7,-42.3) -- (36.7,-34.3);
\fill [black] (36.7,-34.3) -- (36.2,-35.1) -- (37.2,-35.1);
\draw [black] (36.7,-28.3) -- (36.7,-21.2);
\fill [black] (36.7,-21.2) -- (36.2,-22) -- (37.2,-22);
\draw [black] (35.27,-15.56) -- (31.43,-8.44);
\fill [black] (31.43,-8.44) -- (31.37,-9.38) -- (32.25,-8.91);
\draw [black] (38.25,-15.63) -- (42.65,-8.37);
\fill [black] (42.65,-8.37) -- (41.81,-8.79) -- (42.66,-9.31);
\draw [black] (34.52,-29.24) -- (25.18,-20.36);
\fill [black] (25.18,-20.36) -- (25.41,-21.28) -- (26.1,-20.55);
\draw [black] (38.67,-43.04) -- (46.93,-33.56);
\fill [black] (46.93,-33.56) -- (46.03,-33.84) -- (46.78,-34.49);
\draw [black] (38.78,-29.14) -- (46.82,-20.76);
\fill [black] (46.82,-20.76) -- (45.91,-20.99) -- (46.63,-21.69);
\draw [black] (39.35,-29.9) -- (57.85,-20.1);
\fill [black] (57.85,-20.1) -- (56.91,-20.04) -- (57.38,-20.92);
\draw [black] (60.5,-15.7) -- (60.5,-8.8);
\fill [black] (60.5,-8.8) -- (60,-9.6) -- (61,-9.6);
\end{tikzpicture}

\caption{A pointed genealogical tree \(T\), where the directed edges indicate the genealogy, the relative positions of siblings indicate their order and \(c_0\) indicates a distinguished vertex (which is not the common ancestor of the whole tree in this example).}
\label{fig:pointedtree}
\end{center}
\end{figure}

When $T$ is finite, it must have a root $\varnothing$ and its height is $H(T)=\max_{u \in T} d_{T}(\varnothing,u)$ where $d_{T}$ is the graph distance in the tree. An infinite tree $T$ may not have a root, as the cursor may have an infinite line of ancestors.
\par

Given a (pointed genealogical) tree $T$, a position $x \in \Z^{d}$ and a distribution $\theta$ on $\Z^{d}$, we denote by $W_{T}^{x}=\bigl(W_{T}^{x}(u)\bigr)_{u \in T}$ a random walk indexed by $T$, simply called a $T$-walk in the following, starting at $x$ and jumping according to $\theta$ distribution. Note that the starting position is the position of the cursor $c_0$. This $T$-walk is formally defined by

\begin{enumerate}[(i)]
  \item $W_{T}^{x}(c_0) = x$
  \item For all edges in $T$, say from a parent $v$ to a child $u$, the jump $W_{T}^{x}(u)-W_{T}^{x}(v)$ is distributed according to $\theta$ and independent of the jumps along the other edges.

\end{enumerate}

We will drop the superscript $x$ for $x=0$, but most of the time the walk itself (and its jump distribution $\theta$) will be implicit as we will write $T^x=\{ W_{T}^{x}(u), u \in T \} \subset \Z^{d}$ for the set of position visited by a $T$-walk starting from $x$. We also define the range of a $T$-walk, \textit{i.e.} the number of position visited in $\Z^d$, as $R^{\theta}(T)=\#T^{x}$  (it depends on $\theta$ but not on $x$).
\par

Let us recall that for a $\mu$-Bienaymé tree $\Tree$, a $\Tree$-walk is also called a Branching Random Walk (BRW). This comes from the branching property of $\Tree$, which enables to sample a BRW in a dynamical way instead of sampling the entire tree $\Tree$ and then the jumps. This does not apply to $\Tree_{n}$-walks and $\Tree_{\infty}$-walks but we may sometimes refer to them as BRW by extension.

\subsubsection{\texorpdfstring{Properties of \(\mu\)-Bienaymé trees and BRW}{Properties of \textbackslash mu-Bienaymé trees}}\label{properties-of-mu-bienayme-trees}

We gather here some known results that will turns out to be useful in the rest of this paper. Recall that $\mu \in \mathrm{dom}(\alpha)$ and as discussed in \Cref{offspring-attracted-to-an-alpha-stable-distribution} it means that despite their possibly infinite variance, \(\mu\)-distributed random variables verify a generalization of the Central Limit Theorem (see \eqref{eq:StableCLT}). We will mostly use that $\mu$ is characterized by \eqref{eq:GenMuStable}.

First, we will need an estimate on the height
\(H(\Tree)=\max_{u \in \Tree}\lvert u \rvert\) of the \(\mu\)-Bienaymé
tree \(\Tree\). The following result, due to Slack \autocite[Lemma
2]{Slack1968BranchingProcessMean}, generalizes Kolmogorov's estimate.
Assuming that $\mu \in \mathrm{dom}(\alpha)$ with \(\alpha \in (1,2]\), we have

\begin{equation}\label{eq:SlackResult}
  \mathbb P(H(\Tree) \geq n)^{\alpha-1}L\big(\mathbb P(H(\Tree) \geq n)^{-1}\big) \sim \frac1{\alpha-1}\frac1n,
\end{equation}
where \(L\) is the slowly varying function from \eqref{eq:GenMuStable}. In
particular, 
\[n \mapsto \P(H(\Tree)>n) \text{ is } \left(-\frac{1}{\alpha-1} \right)\text{-varying.}\]

This estimate on $H(\Tree)$ is also crucial to understand the maximal displacement of a $(\mu,\theta)$-BRW $W_{\Tree}$ in $\Z$ starting at $0$. Indeed, we know from \autocite{Subordination} that when $\theta$ has a finite variance $\sigma_{\theta}$ and a finite moment of order $\delta > \frac{2\alpha}{\alpha-1}$, the tail of $\max W_{\Tree}$ is given by

\begin{equation}\label{eq:QueueMaxStable}
{\mathbb P\left(  \frac{1}{\sigma_{\theta}}\max W_{\Tree} \geq  n \right) \sim_{n \to +\infty} \left( \frac{\alpha+1}{\alpha-1} \right)^{\frac{1}{\alpha-1}} \mathbb P(H(\Tree) > n^2).
}\end{equation} 

Finally, we will make use of some scaling limit results on the height of \(\Tree_{n}\).
When \(\mu\) has finite variance, it is known since Aldous
\autocite{Aldous1991ContinuumRandomTree} that
\(\frac{1}{\sqrt{  n }}\Tree_{n}\) (\emph{i.e.} \(\Tree_{n}\) where each
edge has length \(\frac{1}{\sqrt{ n }}\)) converges in distribution to a
continuum random tree, namely the Brownian continuum tree, and a similar
result for the \(\alpha\)-stable case was obtained by Duquesne
\autocite{Duquesne2003LimitTheoremContour} (see also
\autocite{Kortchemski2013SimpleProofDuquesnes}). To make this
convergence formal, one can see random trees as random compact metric
spaces and use the Gromov-Hausdorff distance (see \emph{e.g.}
\autocite{LeGall2005RandomTreesApplications} for a quick introduction).
However, in this paper we only need the following estimate on
\(H(\Tree_{n})\) which does not require such formalism. Assuming that
$\mu \in \mathrm{dom}(\alpha)$ with
\(\alpha \in (1,2]\), we have
\begin{equation}\label{eq:HeightEstimate}
  \frac{\ell(n)}{n^{\frac{\alpha-1}{\alpha}}} \times H(\Tree_{n}) \text{ converges in distribution,}
\end{equation}
where \(\ell\) is a slowly varying function (see \eqref{eq:StableCLT} for a characterization of \(\ell\)). In particular, this means that $\Tree_n$ are $\frac{\alpha}{\alpha-1}$-dimensional in the sense that the maximal $D>1$ for which they satisfy \Cref{Assum:MaillardDimension} is $D = \frac{\alpha}{\alpha-1}$ .

\subsection{Invariant trees and walks}\label{invariant-trees-and-walks}

As explained in
\Cref{the-infinite-walk-associated-to-branching-random-walk}, there are
several slightly different versions of the invariant tree introduced by
Le Gall \& Lin. Here the one we work
with, denoted by \(\Tree_{\infty}\), is the same as in Asselah, Schapira \& Sousi
\autocite{AsselahSchapiraSousi2025LocalTimesCapacity}.\footnote{To avoid confusion, note that in  \autocite{AsselahSchapiraSousi2025LocalTimesCapacity} the invariant is
denoted by \(\Tree\) and \(\mu\)-Bienaymé trees are denoted by \(\Tree_c\). Moreover, the distinguished vertex called the cursor of the invariant tree here is called its root in \autocite{AsselahSchapiraSousi2025LocalTimesCapacity}.} However, we see it with the point of view of pointed genealogical tree to provide a more natural interpretation of
a \(\Tree_{\infty}\)-walk. Indeed, we have already said that this tree can be seen as a local limit of \(\Tree_{n}\) \emph{pointed at a uniform
vertex but without modifying the genealogy}. Such results were first
obtained by Aldous \autocite{Aldous1991AsymptoticFringeDistributions}, so
\(\Tree_{\infty}\) is sometimes referred to as Aldous' sin-tree (which
enables to distinguish it from the local limit of \(\Tree_{n}\) obtained when looking around the root, known as Kesten's tree). In our setting of pointed genealogical trees
this convergence has been precisely established by Stufler
\autocite{Stufler2019LocalLimitsLarge}. We do not detail further the convergence toward $\Tree_{\infty}$ and directly describe this tree based on the spine \(\ldots \to c_{-2} \to c_{-1} \to c_{0}\) introduced in \Cref{the-infinite-walk-associated-to-branching-random-walk}.

\subsubsection{Sampling the invariant tree}\label{sampling-the-invariant-tree}

To sample \(\Tree_{\infty}\), we first need $\mu'$ the size-biased version of $\mu$. We recall that $\mu'$ is defined by \(\mu'(k)=k\mu(k)\) and its generating function is given by
\begin{equation}\label{eq:GenSizeBias}
    \gen_{\mu'}(x) = x\gen_{\mu}'(x).
\end{equation}

\begin{itemize}
\tightlist
\item
  Consider a semi-infinite line of vertices
  \((c_{0},c_{-1},c_{-2},\dots)\), called \emph{the spine of
  the tree} and denoted by \(\mathcal{S}\). Here $c_0$ is the cursor and for all $i \geq 0$ we see \(c_{-i}\) as the
  child of \(c_{-i-1}\), hence this spine is growing up from infinity.
\item
  The strict ancestors of \(c_{0}\), namely \(c_{-k}\) for
  all \(k \geq 1\), reproduce according to the size-biased distribution \(\mu'\) and \(c_{-k+1}\) is selected uniformly among
  those children. 
\item
  All other vertices, including \(c_{0}\), reproduce according to
  \(\mu\).
\end{itemize}

Note that with our convention, the out-degree of the cursor
\(c_{0}\) is distributed according to \(\mu\) but one must
add \(1\) to get its total degree since \(c_{0}\) has a
parent. 

As each vertex among $c_0$ and the children of the spine generates $\mu$-Bienaymé trees, which are finite, we can also describe $\Tree_{\infty}$ as a spine with finite trees grafted to every vertex of this spine, as depicted in \Cref{fig:InvariantTree}.

\subsubsection{Structure of $\Tree_{\infty}$}\label{future-and-past-of-the-root}

The genealogical structure defines a partial order on
\(\Tree_{\infty}\): \(u \to v\) if \(u\) is an ancestor of \(v\).
Two vertices \(u,v\) may not be comparable for \(\to\), but one can
always consider their last common ancestor \(u \wedge v\).

Since \(\Tree_{\infty}\) is ordered, this tree also comes with a total
order denoted by $\preceq$ on its vertices, namely the lexicographical (or depth-first) order
starting from infinity. Informally, this order coincides with a
depth-first exploration of the tree starting from the root
lying at infinity. A precise definition could be that
\(u \preceq v\) if and only if either \(u \to v\) or the
children \(u_{*}\) and \(v_{*}\) of \(u \wedge v\) such that \(u\)
(resp. \(v\)) descends from \(u_{*}\) (resp. \(v_{*}\)) satisfy
\(u_{*} \leq  v_{*}\) according to the order on the children of
\(u \wedge v\).

We can now consider two important subparts of \(\Tree_{\infty}\) depicted in \Cref{fig:futureAndPast} :

\begin{itemize}
\item
  the past of the cursor \(c_{0}\) is
  \(\Tree_{-} =\{ u \in \Tree_{\infty}: u \prec c_{0}\}\),
\item
  its future
  \(\Tree_{+}=\{  u \in \Tree_{\infty}: c_{0} \prec u \}\).
\end{itemize}

In words, \(\Tree_{-}\) consists of the spine
\(\mathcal{S}\setminus \{ c_0 \}\) together with the finite
trees grafted on the left of the spine. Following Zhu, we call those
finite trees \emph{past bushes} and we denote them by
\(\mathcal{B}_{k,-}=\{ u \in \Tree_{\infty}: c_{-k} \preceq u \prec c_{-k+1} \}\)
for \(k\geq 1\). On the contrary, \(\Tree_{+}\) does not contain the
spine \(\mathcal{S}\), but only consists of
\(\mathcal{B}_{0}\setminus \{ c_0 \}\) where \(\mathcal{B}_{0}\)
is the bush of descendants of \(c_0\), together with the
\emph{future bushes}, namely the bushes grafted on the right of the spine
and denoted by
\(\mathcal{B}_{k,+}= \{ u \in \Tree_{\infty}: u \wedge c_{-k+1} = c_{-k} \text{ and } c_{-k+1} \prec u \}\). Compare \Cref{fig:futureAndPast,fig:InvariantTree}.
We see that \(\Tree_{+}\) is actually not a tree, but \(\Tree_{+} \cup \mathcal{S}\) is and this is precisely the invariant tree
\(\Tree_{\mathrm{inv}}\) considered in
\autocite{LeGallLin2016RangeTreeindexedRandom}.

\begin{figure}[h]
  \begin{center}
  \includegraphics[height=9cm]{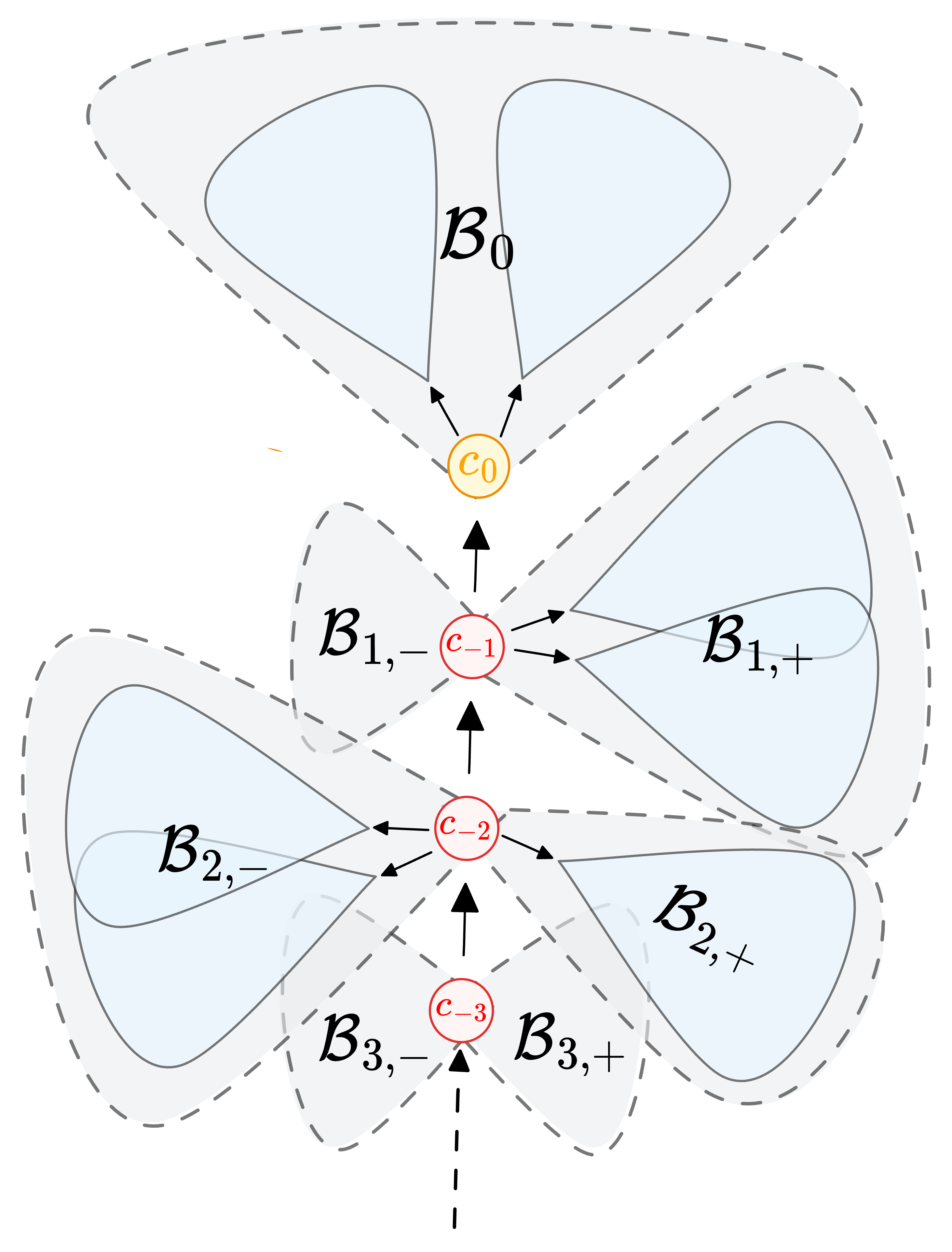}
  \caption{\centering A realization of Aldous' sin-tree \(\Tree_{\infty}\). Edges still indicate the genealogical order. On the spine, the red vertices $c_{-k}\ (k \geq 1)$  reproduce according to \(\mu'\) and the next vertex on the spine is uniformly chosen among their children. The cursor $c_0$, in orange, reproduces according to \(\mu\), as well as every other individual, hence all the trees depicted by light blue bubbles are \(\mu\)-Bienaymé trees. For all $k \geq 1$, all the finite trees grafted to a given side of $c_{-k}$ together with this vertex form a past or future bush $\mathcal B_{k,\pm}$. }\label{fig:InvariantTree}
  \end{center}
\end{figure}

Finally, we stress that \(\mathcal{B}_{0}\) is distributed as \(\Tree\),
while every other bush \(\mathcal{B}_{k,\pm}\) is distributed as a
slightly different tree called an adjoint \(\mu\)-Bienaymé tree and
denoted by \(\widetilde{\Tree}\). Indeed, due to the size-bias and the uniform location of the next child on the spine, the root of \(\widetilde{\Tree}\) has an offspring distributed
according to
\[\widetilde{\mu}(k)=\mu(\lllbracket  k+1,+\infty \lllbracket  ),\]
while all other individuals (if any) reproduce independently according
to \(\mu\). It is a simple matter to see that

\begin{equation}\label{eq:GenAdjoint}{
\gen_{\widetilde{\mu}}(x)=\frac{1-\gen_{\mu}(x)}{1-x}.
}\end{equation}

In a similar way, $\mathcal{B}_k =\mathcal{B}_{k,+} \cup \mathcal{B}_{k,-}$ is distributed as a tree $\Tree'$ where the root reproduces according to $\mu'*\delta_{-1}$ and all other individuals (if any) reproduce independently according to $\mu$.

\subsubsection{\texorpdfstring{\(\Tree_{\infty}\)-walk and invariance}{\textbackslash Tree\_\{\textbackslash infty\}-walk and invariance}}\label{tree_infty-walk-and-invariance}

We will make almost no use of this property, but the main motivation for the
introduction of \(\Tree_{\mathrm{inv}}\) in
\autocite{LeGallLin2016RangeTreeindexedRandom} is an invariance
property, which is also shared by \(\Tree_{\infty}\). Indeed, one can
consider the vertex coming right after \(c_{0}\) in
lexicographical order and declare it to be the new cursor, which means that we consider the ancestral
lineage of this new cursor to be the new spine. This gives a new pointed tree \(\Tree_{\infty}'\), and
this tree is actually distributed as \(\Tree_{\infty}\). This invariance
may be recovered as consequence of the construction of \(\Tree_{\infty}\) as a
local limit around a uniform vertex.

With our framework, this invariance automatically extends into an invariance in distribution of \(\Tree_{\infty}\)-walk
\(W_{\Tree_{\infty}}\) when one moves the cursor from $c_0$ to the next vertex and shifts all spatial positions so that the new cursor is still at $0 \in \Z^d$. However, recall that the jumps of a \(W_{\Tree_{\infty}}\) follows the
genealogical structure of \(\Tree_{\infty}\): If \(u\) is the parent of
\(v\) in \(\Tree_{\infty}\) then
\(W_{\Tree_{\infty}}(v)-W_{\Tree_{\infty}}(u)\) is distributed as
\(\theta\). This means that if we start at \(c_{0}\) and then goes down
the spine, we will not see a random walk \(S\) with jump distributed
according to \(\theta\) but its reversed \(\bar{S}\) where the jumps
follows \(\bar{\theta}(x)=\theta(-x)\). This observation motivates the framework of pointed genealogical trees, where the special orientation of the walk along the spine is a natural consequence of the genealogy of the tree.

\subsection{Recurrence and transience for infinite branching random walk}\label{recurrence-and-transience-for-infinite-branching-random-walk}

We now prove \Cref{prop:DichotomyRecTransienceAdlous}, \textit{i.e.} we show that the classical dichotomy between
transience and recurrence still holds for \(\Tree_{\infty}\)-walk. Let
us first state some preliminary lemmas about recurrence.

\begin{lemma}[$0$-$1$ law for visiting a vertex infinitely often]\label{lem:01lawBienayme}

Assume that \(\mu\) is critical and \(\theta\) is not supported by a
strict subgroup of \(\Z^d\), then 
\[\forall x \in \Z^d,\ \P\left(\sum_{u \in \Tree_{\infty}} \1_{W_{\Tree_{\infty}}(u)=x} = +\infty\right) \in \{ 0, 1 \}.\]
Moreover, this probability actually does not depend on x.

\end{lemma}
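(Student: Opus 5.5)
We will prove the $0$-$1$ law with a Hewitt--Savage (or Kolmogorov) type argument along the spine, and then use the irreducibility of $\theta$ to show that the probability does not depend on $x$.

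\emph{Encoding.} For $k \geq 1$, let $\xi_k$ be the jump from $c_{-k}$ to $c_{-k+1}$, together with the bush $\mathcal{B}_k$ grafted at $c_{-k}$ (both its left and right parts, with the position of $c_{-k+1}$ among the children) and the walk increments on that bush, measured relative to $W(c_{-k})$. Also let $\xi_0$ be $\mathcal{B}_0$ with its increments. The construction of $\Tree_{\infty}$ in \Cref{sampling-the-invariant-tree} makes the $\xi_k$ independent, and the $\xi_k$ for $k\geq 1$ are i.i.d. The whole $\Tree_{\infty}$-walk is a measurable function of $(\xi_k)_{k\ge0}$. Since $W(c_{-k}) = -\sum_{j\le k}(\text{spine jumps})$, the walk along the spine read downward is a random walk $\bar S$ with $\bar\theta$-jumps, and the visits in bush $k$ occur at $\bar S_k$ plus a local configuration.

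\emph{Exchangeability.} Consider the event $E_x=\{\sum_u \1_{W(u)=x}=\infty\}$. A finite permutation $\sigma$ of $(\xi_1,\ldots,\xi_n)$ leaves $\bar S_n$ unchanged. It also leaves unchanged every bush $\mathcal{B}_k$ with $k>n$, together with its absolute positions. Moreover, each bush is finite because $\mu$ is critical. So the total number of visits to $x$ coming from bushes $k>n$ is unchanged, and the number of visits coming from bushes $0,\dots,n$ is finite both before and after the permutation. Hence $E_x$ is invariant under finite permutations of the i.i.d.\ sequence $(\xi_k)_{k\geq1}$ (conditionally on $\xi_0$). The Hewitt--Savage $0$-$1$ law then gives $\P(E_x\mid \xi_0)\in\{0,1\}$. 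Moreover, $E_x$ does not depend on $\xi_0$, because $\mathcal B_0$ contributes finitely many visits. Therefore $\P(E_x)\in\{0,1\}$.

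\emph{Independence from $x$.} Suppose $\P(E_x)=1$ and let $y$ be arbitrary. Since $\theta$ generates $\Z^d$, we want to show $\P(E_y)=1$. By $\Z^d$-translation, $E_y$ is the event that the walk started from $x-y$ visits $x$ infinitely often. We use the invariance of \Cref{tree_infty-walk-and-invariance}: moving the cursor to the next vertex in lexicographic order, and iterating, preserves the law up to recentering. After $m$ shifts the cursor sits at $u_m$, and conditionally on the event that $W(u_m)=z$, the walk seen from $u_m$ is a $\Tree_{\infty}$-walk started from $z$. The events $E_\cdot$ are unchanged by moving the cursor, since they concern the whole tree. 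This gives $\P(E_x)$ as a mixture $\sum_z \P(W(u_m)=z)\,p(x-z)$, where $p(v)$ denotes the probability of infinitely many visits to $v$. Each $p(v)$ lies in $\{0,1\}$ by the first step applied with $v$ in place of $x$.

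The main obstacle is to show that for every $v$ there is some $m$ such that $\P(W(u_m)=v-\ldots)>0$. In other words, the positions of the cursor after successive moves must charge every point, or at least enough points to connect $x$ and $y$. I would obtain this by building finite configurations of the tree and jumps with positive probability. A simpler route, which I would try first, avoids the cursor shift altogether. On $E_x$, infinitely many distinct bushes $k$ visit $x$. Each visit at a vertex $u$ has, with probability bounded below, a child subtree making a prescribed finite path of $\theta$-jumps from $x$ to $y$. Such a path exists because $\theta$ generates $\Z^d$ as a group, though one must check that it generates it as a semigroup; this follows from centering. Then a conditional Borel--Cantelli argument along the spine bushes, using the independence of the $\xi_k$, shows that $y$ is visited infinitely often almost surely. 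Making this conditional independence rigorous is the delicate part.
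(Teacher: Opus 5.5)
Your Hewitt--Savage step is correct and is exactly the paper's argument: your $\xi_k$ are the paper's i.i.d.\ pairs $(J_i,W_{\mathcal B_i})$. The gap is in the independence from $x$, where neither of your two routes is completed.

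\textbf{The cursor-shift route.} The assertion that, conditionally on $W(u_m)=z$, the re-pointed walk is a $\Tree_\infty$-walk started from $z$ is false. The cursor-shift invariance only identifies the \emph{unconditional} law of the re-pointed, recentred walk. The displacement $W(u_m)$ carries information about the configuration around the new cursor. For instance, for $m=1$ the event $\{W(u_1)=z\}$ changes the probability that $c_0$ has a child, i.e.\ that the predecessor of the new cursor is its parent.

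The route can nevertheless be repaired with the $0$-$1$ law you already have. Let $E'_v$ be the event that the re-pointed recentred walk visits $v$ infinitely often; by invariance, $\P(E'_v)=p(v)\in\{0,1\}$. Then $E_x=\bigcup_z \{W(u_m)=z\}\cap E'_{x-z}$, and each term has probability either $\P(W(u_m)=z)$ or $0$. This gives $p(x)=\sum_z\P(W(u_m)=z)\,p(x-z)$ without any conditional claim.

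Your ``main obstacle'' is then immediate. With probability $(1-\mu(0))^m>0$, the vertices $u_1,\dots,u_m$ are successive first children below $c_0$, so $W(u_m)$ charges $\mathrm{supp}(\theta^{*m})$. Hence $p(x-z)=p(x)$ for every $x$ and every $z$ in the semigroup generated by $\mathrm{supp}\,\theta$. So $p$ is invariant under the group this semigroup generates, which is $\Z^d$. Completed this way, your argument is a genuinely different route from the paper's. It relies on the cursor-shift invariance, which the paper imports from the local-limit construction. In exchange, it needs neither a case split on the spine nor semigroup irreducibility (group generation suffices).

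\textbf{The fallback route.} This is the paper's route, and the step you call delicate is precisely the missing idea. Conditioning on an arbitrary bush vertex $u$ being at $x$ biases the subtree of $u$, since the bush may reach $x$ only through descendants of $u$. The paper handles this in two steps.
\begin{enumerate}[(i)]
\item If the spine walk visits $x_0$ infinitely often with positive probability, it is a recurrent random walk. Being irreducible, it then visits every point infinitely often.
\item Otherwise, since bushes are finite, almost surely infinitely many bushes contain an off-spine vertex at $x_0$. Explore each such bush in depth-first order and stop at the first off-spine vertex $u_i$ at $x_0$. The descendants of $u_i$ form a $\mu$-Bienaym\'e tree whose walk starts at $x_0$, independent of everything explored before $u_i$ and of the other bushes. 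Borel--Cantelli with the fixed probability $\P(y\in\Tree^{x_0})>0$ then gives infinitely many visits to $y$. This is where your remark that centring yields semigroup generation is needed.
\end{enumerate}
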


\begin{lemma}[Recurrence cannot be restricted to the past]\label{lem:RecurrenceAndFutureRecurrence}

If almost surely \(x\) is visited infinitely often by
\(W_{\Tree_{\infty}}\), then actually
\(\#\{  u \in \Tree_{+}: W_{\Tree_{\infty}}(u) = x \} = +\infty\) almost surely.

\end{lemma}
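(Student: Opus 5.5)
We have to prove \Cref{lem:RecurrenceAndFutureRecurrence}: if almost surely $x$ is visited infinitely often by $W_{\Tree_{\infty}}$, then almost surely it is visited infinitely often inside $\Tree_{+}$. By \Cref{lem:01lawBienayme}, the hypothesis means every position is almost surely visited infinitely often. Since $\Tree_{\infty} = \Tree_{-} \cup \{c_0\} \cup \Tree_{+}$, on the event that $\Tree_{+}$ visits $x$ only finitely often, the past $\Tree_{-}$ must visit $x$ infinitely often. So we must rule out the scenario where $\Tree_{+}$ visits $x$ finitely often while $\Tree_{-}$ visits it infinitely often.

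The plan is to use the invariance of \Cref{tree_infty-walk-and-invariance}, together with a reversal argument between past and future. Set $F=\#\{u\in\Tree_{+}: W_{\Tree_{\infty}}(u)=x\}$. Enumerate $\Tree_{\infty}$ in depth-first order as $(u_n)_{n\in\Z}$ with $u_0=c_0$; this is possible because every bush is finite, so each vertex has finitely many predecessors before any given spine vertex. Shifting the cursor to $u_n$ preserves the law of the walk, after recentering positions. Hence the sequence of increments $(W(u_{n+1})-W(u_n))_{n\in\Z}$ is stationary; call it $X_n$.

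\textbf{Step 1 (reduce to a stationary sequence).} Visits of $W$ to $x$ in $\Tree_{+}$ correspond to times $n\ge 1$ with $S_n=x$, where $S_n=\sum_{k<n}X_k$. Visits in $\Tree_{-}$ correspond to times $n<0$. So the statement becomes a statement about a two-sided stationary process $(S_n)_{n\in\Z}$ with $S_0=0$: if $S$ hits $x$ infinitely often in total, it does so infinitely often in positive time.

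\textbf{Step 2 (rule out finitely many future visits).} Consider the event $E_x=\{F<\infty\}$ and suppose $\P(E_x)>0$. On $E_x$, there is a last time $N\ge 0$ with $S_N=x$, or $x$ is never visited in the future. Apply a standard ``last visit'' argument. For each $n$, consider the event $A_n$ that $n$ is a visit time to $x$, no later time is a visit to $x$, but some earlier time is. By stationarity, shifting by $n$ and recentering, $\P(A_n)$ equals the probability of the event
\[
  \{\text{time }0\text{ visits }0,\ \text{no }m>0\text{ visits }0,\ \text{some }m<0\text{ visits }0\},
\]
so this probability does not depend on $n$. The events $A_n$ are disjoint in $n$, since there is at most one last visit. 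Hence each has probability $0$.

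Consequently, almost surely, whenever $x$ is visited finitely often in the future but at least once overall, something degenerate happens. Since $\Tree_{-}$ visits $x$ infinitely often, there is a last visit time $N\in\Z$, and this is exactly the event $\bigcup_n A_n$, which is null. This gives the contradiction.

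The main obstacle is to make Step 1 rigorous. The depth-first enumeration must be shown to be bijective with $\Z$ almost surely, meaning it is infinite in both directions. The future is infinite because bushes are finite but there are infinitely many future bushes with positive probability of being nonempty; the same holds for the past, by Borel--Cantelli on independent bushes. The shift invariance must also be checked to extend to $n$ steps and negative shifts; backward shifts follow since the shift is measure-preserving and invertible. With this in hand, the last-visit argument is the classical Poincaré-type argument and should go through directly.
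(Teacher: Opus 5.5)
Your Step 2 has a genuine gap: the claim that $\P(A_n)$ does not depend on $n$ is false. Moving the cursor to $u_n$ and recentering means passing to $S'_m = S_{n+m}-S_n$. Under this change, the condition $S_n = x$ does not become trivial; it becomes $S'_{-n} = -x$. What stationarity actually gives is
\[
\P(A_n) = \P\bigl(B \cap \{S_{-n} = -x\}\bigr), \qquad B = \{S_m \neq 0\ \forall m>0\} \cap \{\exists m<0 : S_m = 0\},
\]
and this depends on $n$. The events whose probability really is shift-invariant are of the form ``$n$ is the last visit to the site $S_n$''. But those are not disjoint in $n$, since different times can be last visits to different sites. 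You cannot have ``disjoint'' and ``equal probabilities'' at the same time, so $\P(A_n)=0$ does not follow as written.

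The argument can be repaired using the correct identity. By disjointness and Tonelli,
\[
1 \geq \sum_{n\in\Z} \P(A_n) = \E\Bigl[\1_B \,\#\{n\in\Z : S_n = -x\}\Bigr].
\]
By \Cref{lem:01lawBienayme}, which you already invoked, $-x$ is almost surely visited infinitely often. Hence $\P(B)=0$ and every $A_n$ is null. The almost surely infinite set of visit times to $x$ therefore has no maximum, so it is unbounded above.

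With this fix your route works, and it differs from the paper's. The paper never uses the cursor-shift invariance, which it only quotes and does not prove. Instead it conditions on the spine walk $W_{\mathcal S}$ and applies Borel--Cantelli to the conditionally independent bushes. If the spine walk is recurrent, it uses the future bushes at spine visits of $x$. Otherwise, it uses the future halves of the infinitely many bushes hitting $x$, via left/right symmetry of each bush given the spine. That argument relies only on the explicit spine construction; yours needs the invariance but is a clean ergodic last-exit argument.

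A minor point: the past is trivially infinite because it contains the spine, so no Borel--Cantelli argument is needed there.
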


Finally, to relate $\P(\esc)$ and $\P(\esc_+)$, we adapt the method proposed in \autocite{LeGallLin2016RangeTreeindexedRandom} to express $\P(\esc_+)$.
We will need the following estimates on the probabilities that a \(\Tree\)-walk, a $\widetilde{\Tree}$-walk or a $\Tree'$-walk starting from \(0\) reach some position \(x \in \Z^{d}\). 

\begin{lemma}\label{lem:VisitingProbaV1}
Set \(p(x) = \P(x \in \Tree^0)\), \(\widetilde{p}(x) = \P(x \in \widetilde{\Tree}^0)\), and \(p'(x) = \P(x \in (\Tree')^0)\). Then we have
\begin{enumerate}[(i)]
    \item \( \widetilde{p}(x) \sim_{x \to \infty} p(x)^{\alpha-1}L(p(x)^{-1}). \)
    \item \( p'(x) \sim_{x \to \infty} \alpha p(x)^{\alpha-1}L(p(x)^{-1}). \)
\end{enumerate}
\end{lemma}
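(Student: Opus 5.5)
Both estimates come from conditioning on the offspring of the root and using the independence of the subtrees it spawns, so that $\widetilde p$ and $p'$ are expressed through $p$ and the generating functions $\gen_{\widetilde\mu}$, $\gen_{\mu'}$. The only analytic input is then \eqref{eq:GenMuStable}. We first check that $p(x)\to 0$ as $x\to\infty$. This holds because $\Tree$ is almost surely finite, so for any $\varepsilon$ there is $N$ with $\P(\#\Tree>N)<\varepsilon$. On the event $\#\Tree\le N$, reaching $x$ forces some path of length at most $N$ to travel distance $|x|$, which has vanishing probability. Here we need $x\neq 0$; for large $x$ the root itself is not at $x$.

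\textbf{Step (i).} Decompose $\widetilde{\Tree}$ at its root. The root sits at $0\neq x$ and has $K\sim\widetilde\mu$ children. Each child $j$ sits at an independent $\theta$-distributed position $\xi_j$ and carries an independent $\mu$-Bienaymé tree. Hence
\[
1-\widetilde p(x)=\E\Bigl[\prod_{j=1}^{K}\bigl(1-q(x)\bigr)\Bigr]=\gen_{\widetilde\mu}\bigl(1-q(x)\bigr),\qquad q(x):=\sum_{y}\theta(y)\,p(x-y).
\]
By \eqref{eq:GenAdjoint} and \eqref{eq:GenMuStable},
\[
1-\gen_{\widetilde\mu}(1-s)=1-\frac{1-\gen_\mu(1-s)}{s}=\frac{\gen_\mu(1-s)-(1-s)}{s}=s^{\alpha-1}L(1/s).
\]
Therefore $\widetilde p(x)=q(x)^{\alpha-1}L(q(x)^{-1})$, and this identity is exact.

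\textbf{Step (ii).} The same decomposition applies to $\Tree$ itself: $1-p(x)=\gen_\mu(1-q(x))$ for $x\neq0$. From \eqref{eq:GenMuStable} this gives
\[
p(x)=q(x)-q(x)^\alpha L(1/q(x)) = q(x)\bigl(1-o(1)\bigr),
\]
since $q\to0$ (as $p\to0$ by dominated convergence) and $s^{\alpha-1}L(1/s)\to0$. So $q(x)\sim p(x)$. Because $s\mapsto s^{\alpha-1}L(1/s)$ is regularly varying at $0$, the uniform convergence theorem for slowly varying functions gives $q^{\alpha-1}L(1/q)\sim p^{\alpha-1}L(1/p)$. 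This proves (i).

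The root of $\Tree'$ reproduces according to $\mu'*\delta_{-1}$, whose generating function is $\gen_{\mu'}(x)/x=\gen_\mu'(x)$ by \eqref{eq:GenSizeBias}. Hence $1-p'(x)=\gen_\mu'(1-q(x))$. Differentiating \eqref{eq:GenMuStable} in $s$ gives
\[
1-\gen_\mu'(1-s)=\frac{d}{ds}\bigl(s^\alpha L(1/s)\bigr).
\]
We want this to be $\sim\alpha s^{\alpha-1}L(1/s)$. Differentiating an asymptotic relation is not automatic. Here, though, $\gen_\mu'$ is monotone, so $s\mapsto 1-\gen_\mu'(1-s)$ is monotone. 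A monotone density theorem (Karamata type, from \Cref{annexe-regularly-varying-functions}) then applies to the exact identity $s^\alpha L(1/s)=\int_0^s\bigl(1-\gen_\mu'(1-t)\bigr)\,dt$ and yields the desired equivalent. Combining this with $q\sim p$ as above gives (ii).

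The main obstacle is this monotone-density step in (ii). Two further points need care but are routine: that $q\sim p$ holds, and that $L$ is evaluated at comparable arguments, which follows from the uniform convergence theorem.
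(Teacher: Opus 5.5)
Your proposal is correct and follows essentially the paper's argument: decompose $\widetilde{\Tree}$ and $\Tree'$ at the root using \eqref{eq:GenAdjoint} and \eqref{eq:GenSizeBias}, deduce $q\sim p$ from $1-p=\gen_{\mu}(1-q)$ and criticality, and get $1-\gen_{\mu}'(1-s)\sim\alpha s^{\alpha-1}L(1/s)$ from the Monotone Density Theorem. You also spell out, correctly, several points the paper leaves implicit: that $\widetilde p=q^{\alpha-1}L(1/q)$ holds exactly, the uniform convergence step that replaces $q$ by $p$, and the transfer of the monotone density theorem to $s\to 0$.
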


\Cref{prop:DichotomyRecTransienceAdlous} is then a consequence of those lemmas and the invariance
property of \(W_{\Tree_{\infty}}\).

\begin{proof}[Proof of \Cref{prop:DichotomyRecTransienceAdlous}.]

First, let us link the events \(\esc\) and \(\esc_{+}\). Since
\(\esc \subset \esc_{+}\), we obviously have
\(\P(\esc_{+})=0 \implies \P(\esc)=0\). To prove the converse, one can
follow Le Gall \& Lin's approach and
condition on the spine-indexed walk \(W_{\mathcal{S}}\) to
express \(\P(\esc)\) and \(\P(\esc_{+})\) according to a single random
walk \((S_{k})_{k}\). Indeed, given \(W_{\mathcal{S}}\), the restrictions of the walks to the bushes are independent and we get

\[ \P(\esc) = \P\bigl(0 \not\in W_{\Tree_{\infty}}(\mathcal{B}_0\setminus\{c_0\})\bigr) \E \biggl(\prod_{k \geq 1} \P(0 \not\in W_{\Tree_{\infty}}(\mathcal{B}_k) | W_{\mathcal{S}} )\biggr),\]
where we recall that $\mathcal{B}_k =\mathcal{B}_{k,+} \cup \mathcal{B}_{k,-}$ is the bush formed of $c_{-k}$ and the finite subtrees grafted to it in $\Tree_{\infty}$. Since $\mathcal{B}_k$ is distributed as $\Tree'$, $W_{\mathcal{S}}$ is distributed as \((-S_{k})_{k}\), and the only dependence between $W_{\mathcal{S}}$ and the walk on the bush $\mathcal{B}_k$ is contained in $W_{\mathcal{S}}(c_{-k})$, we deduce that $\P(\esc)=0$ if and only if almost surely
\(\prod_{k \geq 1} 1-p'(\bar{S}_{k}) = 0\), which is
equivalent to \(\sum_{k \geq 1} p'(S_{k}) = +\infty\) almost
surely.

A similar computation for \(\Tree_+\) instead of \(\Tree_{\infty}\) yields that \(\P(\esc_{+})=0\) if and only if almost surely
\(\sum_{k \geq 1 } \widetilde{p}(S_{k}) = +\infty\). But according to \Cref{lem:VisitingProbaV1}, we clearly see that \(\sum_{k \geq 1 } \widetilde{p}(S_{k}) = +\infty\) if and only if \(\sum_{k \geq 1 } p'(S_{k}) = +\infty\), thus we conclude that \(\P(\esc)=0\) if and only if \(\P(\esc_{+})=0\).

We can now prove the first statements of \Cref{prop:DichotomyRecTransienceAdlous}:

\begin{enumerate}[(i)]
\item Clearly, \(\P(\sum_{u \in \Tree_{\infty}} \1_{W_{\Tree_{\infty}}(u)=0} = +\infty) \leq 1 - \P(\esc)\), hence by \Cref{lem:01lawBienayme} \(\P(\esc)>0\) implies that almost surely there is no \(x \in \Z^d\) visited infinitely often by \(W_{\Tree_{\infty}}\).
\item Conversely, if \(\P(\esc)=0\) then \(\P(\esc_{+})=0\) and by the invariance property of \(W_{\Tree_{\infty}}\), almost surely we can construct a sequence of vertices \((\Omega_{k})_{k\geq 1}\) in \(\Tree_{+}\), increasing in lexicographical order, such that \(W_{\Tree_{\infty}}(\Omega_{k})=0\). This shows that \(\P(\sum_{u \in \Tree_{\infty}} \1_{W_{\Tree_{\infty}}(u)=0} = +\infty) = 1\) and again we conclude by \Cref{lem:01lawBienayme}.
\end{enumerate}

Finally, \Cref{lem:RecurrenceAndFutureRecurrence} gives the last assertion of \Cref{prop:DichotomyRecTransienceAdlous}.
\end{proof}

We end this section by proving the three lemmas.

\begin{proof}[Proof of \Cref{lem:01lawBienayme}.]
Fix \(x \in \Z^d\). Observe that altering a finite part of
\(\Tree_{\infty}\) (with the corresponding jumps on the edges of this
finite part) \emph{in a way such that the walk is modified on at most a
finite number of vertices} does not affect the fact that
\(W_{\Tree_{\infty}}\) visits \(x\) infinitely often. For instance,
exchanging two jumps on the spine is such an alteration, but modifying a
single jump on the spine is not as it alters the entire walk beyond this
vertex.

For \(i \geq 1\), set \(J_{i}=W_{\Tree_{\infty}}(c_{-(i-1)})-W_{\Tree_{\infty}}(c_{-i})\) to be the \(i\)-th jump on the spine
\(\mathcal{S}\) of \(\Tree_{\infty}\), and set \(\mathcal{B}_i\) to be the bush grafted on both sides of the spine at \(c_{-i}\), rooted at this vertex. Finally, \(W_{\mathcal{B}_i}\) is the
\(\mathcal{B}_i\)-indexed walk obtained by assigning position \(0\) to \(c_{-i}\) and then jumping with the jumps already assigned to the
edges of \(\mathcal{B}_i\). By construction \((J_{i},W_{\mathcal{B}_i})_{i \geq 1}\) are
i.i.d. and are enough to reconstruct \(W_{\Tree_{\infty}}\) except for
the finite bush grafted to the cursor $c_0$ of \(\Tree_{\infty}\). According to
the preliminary remark, the event
\(\bigl(\sum_{u \in \Tree_{\infty}} \1_{W_{\Tree_{\infty}}(u)=x} = +\infty \bigr)\)
is measurable with respect to \((J_{i},W_{\mathcal{B}_i})_{i \geq 1}\) and is
invariant by finite permutation of the indices \(i\), hence by
Hewitt-Savage \(0\)-\(1\) law it has probability \(0\) or \(1\).

Now assume that there is a \(x_{0} \in \Z^d\) which is visited
infinitely often by \(W_{\Tree_{\infty}}\) almost surely. There are two
possibilities:

\begin{enumerate}
    \item We may have \(\P\bigl(\#\{ u \in \mathcal{S}: W_{\Tree_{\infty}}(u) =x_{0} \} =+\infty\bigr) > 0\), in which case the random walk indexed by the spine \(\mathcal{S}\) is recurrent and by assumption it is irreducible, hence almost surely it visits every \(x \in \Z^d\) infinitely often.
    \item Else, since all bushes grafted to the spine are finite, almost surely we must have infinitely many bushes that contains a \(u \not\in \mathcal{S}\) such that \(W_{\Tree_{\infty}}(u)=x_{0}\). Denote by \((\mathcal{B}_{[i]})_{i \geq 1}\) the subsequence of bushes hitting $x_0$. For each, \(i \geq 1\), we explore the tree \(\mathcal{B}_{[i]}\) in lexicographical order, until we find a vertex outside of the spine with position $x_0$ denoted by \(u_{i}\). Then, by the branching property the descendants of \(u_{i}\) form a \(\mu\)-Bienaymé tree \(T_{i}\) independent of the vertices explored earlier and of every other bushes, together with the corresponding jumps. Since \(\theta\) generates an irreducible random walk, for all \(x \in \Z^d\) we have \(\P(x \in \Tree^{x_{0}}) >0\), hence by Borel-Cantelli lemma there is infinitely many \(T_{i}\) that contains a vertex \(v\) such that \(W_{\Tree_{\infty}}(v)=x\).
\end{enumerate}
In both cases we see that when almost surely \(x_0\) is visited infinitely often, then this holds for all \(x \in \Z^{d}\).
\end{proof}

A simple extension of the argument above also yields \Cref{lem:RecurrenceAndFutureRecurrence}.

\begin{proof}[Proof of \Cref{lem:RecurrenceAndFutureRecurrence}.]

We argue as earlier by considering two possibilities:

\begin{enumerate}
    \item If \(W_{\mathcal{S}}\), the random walk indexed by the spine \(\mathcal{S}\), is recurrent, then almost surely it visits \(x\) infinitely often and we have an i.i.d. sequences of bushes included in \(\Tree_{+}\) grafted to those vertices of the spine that visit \(x\). Borel-Cantelli Lemma guaranties that almost surely there are infinitely many of those bushes that also visit \(x\), hence we have the desired result.
    \item Else, as earlier we must have infinitely many bushes \((\mathcal{B}_{[i]})_{i \geq 1}\) that contains a \(u\) such that \(W_{\Tree_{\infty}}(u)=x\). Each of those bushes can be split into its part in \(\Tree_{+}\), denoted by \(\mathcal{B}_{[i],+}\), and its part in \(\Tree_{-}\setminus \mathcal{S}\), denoted by \(\mathcal{B}_{[i],-}\). Given the random walk \(W_{\mathcal{S}}\), the \(\mathcal{B}_{[i]}\)-indexed walks are independent, and by symmetry we have
\begin{equation*}{
\P(\exists u \in \mathcal{B}_{[i],+}: W_{\Tree_{\infty}}(u)=x \ \vert\ W_{\mathcal{S}})=\P(\exists u \in \mathcal{B}_{[i],-}: W_{\Tree_{\infty}}(u)=x \ \vert\ W_{\mathcal{S}}) \geq \frac{1}{2}, }
\end{equation*}
hence by Borel-Cantelli lemma, given \(W_{\mathcal{S}}\), almost surely there are infinitely many \(B_{[i],+}\) that visit \(x\), and the desired result follows.
\end{enumerate}
\end{proof}

Finally, we deal with \Cref{lem:VisitingProbaV1}. Actually, we only prove the second assertion here. The first one can be obtained in a very similar way by replacing \eqref{eq:GenSizeBias} by \eqref{eq:GenAdjoint}. We will also prove later (see \Cref{lem:FirstEstimatesVisitingProba}) an extended version of this first assertion, the reader may find there a detailed proof. 
\begin{proof}[Proof of \Cref{lem:VisitingProbaV1}.] 

We decompose \(\Tree'\) into its subtrees grafted to the root,
and we use that according to \eqref{eq:GenSizeBias} the offspring distribution $\mu'*\delta_{-1}$ of this root has generating
function \(\gen_{\mu}'(x)\). For \(x \neq 0\), not hitting $x$ means having no subtree grafted to the root that hits $x$, and those subtrees are classical $\mu$-Bienaymé trees thus we get

\begin{equation*}{
1-p'(x) = \gen_{\mu}'\big(1-\mathbb E(p(x-S_{1})\big). 
}\end{equation*}

The same decomposition at the root applied to $\Tree$ yields $1-p(x)=1-\gen_{\mu}\big(1-\mathbb E(p(x-S_{1})\big)$, and we obviously have $p(x) \to 0$ as $x \to \infty$ hence by criticality of $\mu$ we have $\mathbb E(p(x-S_{1})) \sim p(x)$ . Moreover, we also have from
\eqref{eq:GenMuStable} and the Monotone Density Theorem (\Cref{thm:MonotoneDensity}) that
\[1-\gen_{\mu}'(1-s) \sim_{s \to 0} \alpha s^{\alpha-1}L\left(\frac{1}{s} \right).\]
Putting everything together, we see that \(p'(x) \sim_{x \to \infty} \alpha p(x)^{\alpha-1}L(p(x)^{-1}). \)
\end{proof}

\section{Recurrence and transience of stable-BRW}\label{recurrence-and-transience-of-stable-brw-1}

\subsection{Recurrence in low dimension}\label{recurrence-in-low-dimension}

To prove that \(\Tree_{n}\)-walks are recurrent in dimension \(d \leq 4\)
when \(\mu\) has a finite variance and \(\theta\) has a finite moment of
order \(3\), Le Gall \& Lin relied on a
\(2^\text{nd}\) moment method to bound \(\P\left(0 \in \Tree^x \right)\)
for large \(x\), which is sufficient to show that
\(\P \bigl(\esc_{+} \bigr) = 0\). However, as soon as
\(\sigma_{\mu}^2 = +\infty\), we cannot adapt this argument due to lack
of moments. Here, we propose a different approach based on the control
of the maximal displacement \(\max W_{\Tree_{n}}\) of a
\(\Tree_{n}\)-walk in \(\Z\), which actually works in a more general setting. The main idea is to
adapt a control due to Maillard \autocite{Maillard2016MaximumTreeindexedRandom}.
Let us first review this result before adapting it to our setting. The recurrence criterion \Cref{thm:RecurrenceCriterion} and its corollaries about \(\Tree_{n}\)-walks will directly follow from this.

\subsubsection{Maillard's result}\label{maillards-result}

We consider a sequence of random trees \(T_{n}\) such that \(\#T_{n}=n+1\). In \autocite{Maillard2016MaximumTreeindexedRandom}, Maillard is concerned with
\(T_{n}\)-walks in \(\Z\), denoted by \(W_{n}\) in the following, with a
heavy-tailed jump distribution. Namely, he assumes that the jumps in \(W_{n}\) are i.i.d. copies of
a variable \(J\) such that
\begin{enumerate}[(i)]
    \item \(J\) is centered;
    \item \(x \mapsto \P(J>x)\)
and \(x \mapsto \P(\lvert J \rvert>x)\) are \((-\beta)\)-varying for
some \(\beta>0\).
\end{enumerate}
He proves that the maximum  \(\max \lvert W_{n} \rvert\) is achieved by one big jump during the walk when $\beta$ is sufficiently small compared to the dimension of the trees $T_n$. More precisely, he considers the crude notion of dimension given by \Cref{Assum:MaillardDimension}, which roughly says that the trees are at least $D$-dimensional when for all $\varepsilon>0$ we have $\mathrm{volume} \geq \mathrm{height}^{D-\varepsilon}$ in those trees.

A simplified version of \autocite[Theorem
2.1]{Maillard2016MaximumTreeindexedRandom} is that when \Cref{Assum:MaillardDimension}
holds for some $D>1$ and \(\beta< 2D\), then there is a slowly varying function \(h\)
such that

\begin{equation}\label{eq:MaillardResult}
  \frac{\max \lvert W_{n} \rvert}{n^{1/\beta}h(n)} \text{ converges to a Pareto distribution.}
\end{equation}
In particular, this gives us the order of \(\max \lvert W_{n} \rvert\)
in this big-jump regime.

\subsubsection{Adaptation to light-tailed jumps and recurrence criterion}\label{adaptation-to-light-tailed-jumps-and-recurrence-criterion}

We are interested in \(T_{n}\)-walks where the jump distribution has a
lighter tail, such as nearest-neighbour jumps, hence we cannot apply
directly the previous result, however we can couple heavy-tailed jumps
and light-tailed jumps to get the following result.

\begin{proposition}\label{prop:MaillardLightTailVersion}

Consider \(W_{n}'\) a \(T_{n}\)-walk in \(\Z\), where \((T_{n})\)
satisfies \Cref{Assum:MaillardDimension} for some $D>1$. Assume that the jumps in \(W_{n}'\) are i.i.d. copies of
a centered variable \(J'\) such that for all \(\delta < 2D\), $\E(\lvert J' \rvert^{\delta}) < +\infty$.

Then for all \(\varepsilon>0\) we have

\begin{equation*}{
 \frac{\max \lvert W_{n}' \rvert }{n^{\frac{1}{2D}+\varepsilon}} \xrightarrow[n \to +\infty]{(\P)} 0.
}\end{equation*}

\end{proposition}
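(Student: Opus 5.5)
We build a coupling between the light-tailed walk $W_n'$ and a heavy-tailed walk to which Maillard's result \eqref{eq:MaillardResult} applies. Fix $\varepsilon>0$ and pick $\beta<2D$ close enough to $2D$ that $\frac{1}{\beta}<\frac{1}{2D}+\varepsilon$. Since $\E(\lvert J'\rvert^{\delta})<\infty$ for some $\delta\in(\beta,2D)$, we get $\P(\lvert J'\rvert>x)=o(x^{-\beta})$. So $J'$ has a strictly lighter tail than a centered variable with $(-\beta)$-varying tails.

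The key step is to construct, on the same probability space as each jump $J'$, a centered variable $J$ with regularly varying tails of index $-\beta$ such that $J=J'+K$. Here $K$ should be chosen so that the maximum of a walk built on $J'$ is controlled by walks built on $J$.

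A cleaner route than an additive decomposition is a symmetrisation trick. Let $Y$ be an independent symmetric variable with $\P(\lvert Y\rvert>x)\sim x^{-\beta}$, and set $J=J'+Y$. Then $J$ is centered and its tails are $(-\beta)$-varying, because $J'$ has lighter tails, so $\P(J>x)\sim\P(Y>x)$ and similarly for $\lvert J\rvert$. Now let $W_n^Y$ be the $T_n$-walk with jumps $Y$ and put $W_n=W_n'+W_n^Y$, a $T_n$-walk with jumps $J$.

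Conditionally on $T_n$ and on the $J'$-jumps, $W_n^Y$ is a symmetric field: flipping the signs of all $Y$-jumps preserves its law. Let $u^*$ be a vertex achieving $\max\lvert W_n'\rvert$, which is measurable with respect to $W_n'$ and $T_n$. By symmetry, with conditional probability at least $1/2$ we have $W_n^Y(u^*)$ of the same sign as $W_n'(u^*)$, or equal to zero. On that event,
\[
\max\lvert W_n\rvert\ \ge\ \lvert W_n(u^*)\rvert\ \ge\ \max\lvert W_n'\rvert .
\]
Hence for every $t>0$,
\[
\P\bigl(\max\lvert W_n'\rvert> t\bigr)\le 2\,\P\bigl(\max\lvert W_n\rvert> t\bigr).
\]

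Take $t=\eta\, n^{\frac1{2D}+\varepsilon}$ with $\eta>0$ fixed. By \eqref{eq:MaillardResult} applied to $W_n$, which is legitimate since $\beta<2D$ and \Cref{Assum:MaillardDimension} holds, $\max\lvert W_n\rvert$ is of order $n^{1/\beta}h(n)$. Because $h$ is slowly varying and $\frac1\beta<\frac1{2D}+\varepsilon$, we have $n^{1/\beta}h(n)=o(n^{\frac1{2D}+\varepsilon})$, so $\P(\max\lvert W_n\rvert>t)\to0$. The comparison inequality then gives the claim.

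The main obstacle is checking that $J=J'+Y$ satisfies Maillard's hypotheses exactly. Centering is clear. The regular variation of both $\P(J>x)$ and $\P(\lvert J\rvert>x)$ should follow from the standard fact that adding an independent variable with tail $o(x^{-\beta})$ does not change a $(-\beta)$-varying tail. This uses the finite $\delta$-moment with $\delta>\beta$ via Markov's inequality and a split of the events $\{\lvert J'\rvert>\eta x\}$ versus $\{\lvert J'\rvert\le\eta x\}$. One must also allow $\beta$ to be as large as needed, including $\beta>2$ when $D>1$. Maillard's result as quoted imposes only $\beta<2D$, so this is fine, but I would double-check that the simplified statement covers the whole range $\beta\in(0,2D)$.
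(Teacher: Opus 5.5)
Your proof is correct. Its skeleton matches the paper's: add to each jump an independent variable with $(-\beta)$-varying tails, where $\beta<2D$ and $\frac1\beta<\frac1{2D}+\varepsilon$. Then check, via the split on $\{\lvert J'\rvert\le ax\}$, that the sum keeps a $(-\beta)$-varying tail, and invoke Maillard's result \eqref{eq:MaillardResult}.

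The real difference is the final comparison step. The paper puts independent copies of $J$ and $J'$ on the same tree and uses the deterministic bound $\max\lvert W_n'\rvert\le\max\lvert W_n\rvert+\max\lvert W_n+W_n'\rvert$. That needs no symmetry of the auxiliary variable, but it applies Maillard's theorem to two walks, with jumps $J$ and $J+J'$.

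Your symmetrization inequality instead takes the auxiliary variable $Y$ symmetric. Conditioning on $T_n$ and the $J'$-jumps at a maximizing vertex $u^*$ then gives $\P(\max\lvert W_n'\rvert>t)\le 2\,\P(\max\lvert W_n'+W_n^Y\rvert>t)$ for all $n$ and $t$, so Maillard is used only once. This is legitimate: $u^*$ is measurable with respect to $T_n$ and the $J'$-jumps, and the $Y$-jumps are independent of both, so $W_n^Y(u^*)$ is conditionally a sum of i.i.d. symmetric variables. The trade-off is small: you need a symmetric $Y$, which is easy to arrange, in exchange for a clean non-asymptotic comparison.

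Both routes rely equally on the quoted form of Maillard's theorem covering every $\beta\in(0,2D)$. Your closing caveat therefore applies to the paper's proof as well, not to yours in particular.
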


Since a \(T_{n}\)-walk \(W_{T_{n}}\) in \(\Z^{d}\) is always contained
in the ball \(B(0, \max \lVert W_{T_{n}} \rVert)\), the recurrence criterion \Cref{thm:RecurrenceCriterion} immediately follows. We first prove this criterion and its application to \(T_{n}=\Tree_{n}\) before coming back on \Cref{prop:MaillardLightTailVersion}.

\begin{proof}[Proof of \Cref{thm:RecurrenceCriterion} and \Cref{cor:RecLowDim}.]
We decompose the \(T_{n}\)-walk \(W_{T_{n}}\) into its \(d\) coordinates
by writing \(W_{T_{n}}(u)=(W_{n}^{(1)}(u),\dots,W_{n}^{(d)}(u))\).
Observe that by assumption on $\theta$, each \(W_{n}^{(i)}\) is a \(T_{n}\)-walk in \(\Z\) with a jump distribution satisfying the assumptions of \Cref{prop:MaillardLightTailVersion}. Hence, as soon as $d>2D$
we have \(\frac{1}{d} > \frac{1}{2D}\) and can apply \Cref{prop:MaillardLightTailVersion} to get

\begin{equation*}{
\frac{\max \lVert W_{T_{n}} \rVert }{n^\frac{1}{d}} \asymp \max_{1 \leq i \leq d} \left( \frac{\max \lvert W_{n}^{(i)} \rvert}{n^\frac{1}{d}}  \right) \xrightarrow[n \to +\infty]{(\P)} 0. 
}\end{equation*}
Then, every point visited by the walk is included in
\(B(0,\max\lVert W_{\Tree_{n}} \rVert)\) thus we have

\begin{equation*}{
\frac{R^{\theta}(T_{n})}{n} \preceq   \left(\frac{\max\lVert W_{T_{n}} \rVert } {n^{1/d}} \right)^{d} \xrightarrow[n \to +\infty]{(\P)}0.
}\end{equation*}

This concludes the proof of \Cref{thm:RecurrenceCriterion}. To obtain \Cref{cor:RecLowDim}, let us recall that in the $\alpha$-stable regime, the size-conditioned Bienaymé trees $\Tree_n$ satisfy \Cref{Assum:MaillardDimension} with $D=\frac{\alpha}{\alpha-1}$ (see \eqref{eq:HeightEstimate}), thus we can directly apply \Cref{thm:RecurrenceCriterion} to obtain the recurrence. Then, $\P(\esc)=\P(\esc_+)=0$ follows from \eqref{eq:LinkFiniteInfinite} and \Cref{prop:DichotomyRecTransienceAdlous}.
\end{proof}

\begin{remark}[Effect of big jumps on recurrence]\label{rmk:BigJumpEffect}

With the original result of Maillard, we also see that when \((T_{n})\) satisfies
\Cref{Assum:MaillardDimension} but \(\theta\) is heavy-tailed, we can still prove
recurrence of \(T_{n}\)-walks provided that the dimension is low enough.
For instance if \(\theta\) has a finite moment of order \(\beta\) for
some \(\beta<2D\), then \(T_{n}\)-walks in \(\Z^{d}\) are recurrent for
\(d<\beta\). However we do not have result on the transience in
dimension \(d>\beta\) (for a maximal \(\beta\)) in this heavy-tailed
regime.
\end{remark}

\begin{proof}[Proof of \Cref{prop:MaillardLightTailVersion}.]

Fix \(\varepsilon>0\), and choose \(\beta \in (0,2D)\) such that
\(\frac{1}{2D} <\frac{1}{\beta} < \frac{1}{2D} +\varepsilon\). Now consider a
random variable \(J\) independent from \(J'\) and such that
\begin{enumerate}[(i)]
    \item \(J\) is centered;
    \item \(x \mapsto \P(J>x)\)
and \(x \mapsto \P(\lvert J \rvert>x)\) are \((-\beta)\)-varying for
this \(\beta\).
\end{enumerate}
We claim that the sum
\(J+J'\) is centered and such that \(\P(J+J' > x) \sim \P(J > x)\) and
\(\P(\lvert J+J' \rvert > x)\sim \P(\lvert J \rvert>x)\), so those tails
distribution functions are also \((-\beta)\)-varying.

Indeed, for any \(a \in (0,1)\) we can decompose according to the events
\(\bigl(\lvert J' \rvert \leq ax \bigr)\) and
\(\bigl(\lvert J' \rvert > ax \bigr)\). Then, as $\lvert J'\rvert$ has finite moment of order slightly more than $\beta$, 
\(\P(\lvert J' \rvert > ax) = o\bigl(\P(\lvert J \rvert> x) \bigr)\) for large $x$. This leads to

\begin{equation*}{
 \frac{\P(\lvert J+J' \rvert > x)}{\P(\lvert J \rvert > x)} = \frac{\P(\lvert  J+J' \rvert >x \ \& \ \lvert  J' \rvert  \leq ax)}{\P(\lvert J \rvert > x)} + o(1).
}\end{equation*}

On the event \(\bigl(\lvert J' \rvert \leq ax \bigr)\), we have
\(\bigl(\lvert J \rvert> (1+a)x \bigr) \subset \bigl(\lvert J+J' \rvert > x \bigr) \subset \bigl(\lvert J \rvert > (1-a)x \bigr)\)
by the triangle inequality, hence we can conclude that

\begin{equation*}{
 \begin{aligned}
\varliminf_{ x \to +\infty } \frac{\P(\lvert J+J' \rvert > x)}{\P(\lvert J \rvert > x)} &\geq \varliminf_{ x \to +\infty } \frac{\P(\lvert J \rvert >(1+a)x)}{\P(\lvert J \rvert >x)}
=  (1+a)^{-\beta}; \\
\varlimsup_{ x \to +\infty } \frac{\P(\lvert J+J' \rvert > x)}{\P(\lvert J \rvert > x)} & \leq \varlimsup_{ x \to +\infty } \frac{\P(\lvert J \rvert >(1-a)x)}{\P(\lvert J \rvert >x)} =  (1-a)^{-\beta}.
\end{aligned}
}\end{equation*}

Finally, making \(a\) tends to \(0\) gives
\(\P(\lvert J+J' \rvert> x) \sim \P(\lvert J \rvert> x)\). The same
argument also gives \(\P(J+J' > x) \sim \P(J >x)\).

Now, assign independent copies of \(J\) and \(J'\) to the \emph{same}
tree \(T_{n}\), so that we can consider \(3\) walks indexed by this
tree: \(W_{n}\) where the jumps are copies of \(J\), \(W_{n}'\) where
the jumps are copies of \(J'\), and \(W_{n}+W_{n}'\) (where the jumps
are copies of \(J+J'\)). We simply write

\begin{equation*}{
 \max \lvert W_{n}' \rvert \leq \max \lvert W_{n} \rvert +\max\lvert W_{n} +W_{n}' \rvert,
}\end{equation*}
and then we apply Maillard's result \eqref{eq:MaillardResult} to both \(W_{n}\) and \(W_{n}+W_{n}'\)
to get that, by choice of \(\beta\), we have

\begin{equation*}{
 \frac{\max \lvert W_{n} \rvert +\max\lvert W_{n} +W_{n}' \rvert}{n^{\frac{1}{2D } +\varepsilon}} \xrightarrow[n \to +\infty]{(\P)} 0. 
}\end{equation*}
This proves \Cref{prop:MaillardLightTailVersion}.
\end{proof}

\subsubsection{Recurrence in the Cauchy case}\label{recurrence-in-the-cauchy-case}

The generality of \Cref{thm:RecurrenceCriterion} enables us to look at some other
tree-indexed walks. Here, we apply it to \(\mu\)-Bienaymé trees
\(\Tree_{n}\) where \(\mu\) is assumed to have an even heavier tail
(while still being critical). More precisely, in this subsection only,
\(\mu\) satisfies \Cref{Assum:Cauchy} instead of being attracted to an
\(\alpha\)-stable tree with \(\alpha \in (1,2]\). As explained in
introduction, this assumption is slightly stronger than requiring that
\(\mu\) is attracted to a spectrally positive Cauchy distribution (which
is stable of index \(\alpha=1\)).

In this setting, there is no non-trivial scaling limit of the trees
\(\Tree_{n}\), but \autocite[Theorem
1.3]{Addario-BerryDonderwinkelKortchemski2025CriticalTreesAre} also
proves that with high probability the height \(H(\Tree_n)\) of
\(\Tree_{n}\) grows as a slowly varying function as \(n \to +\infty\).
This implies that \(\Tree_{n}\) are \(+\infty\)-dimensional, in the
sense that they satisfy \Cref{Assum:MaillardDimension} for all $D > 1$.

As a consequence, when \Cref{Assum:Cauchy} holds, if we use a centered jump
distribution \(\theta\) with finite moment of order \(\delta\) for all
\(\delta>0\), we can apply \Cref{thm:RecurrenceCriterion} for every $D>1$ to obtain that \(\Tree_{n}\)-walks
are recurrent in \(\Z^d\) whatever \(d\) is, in the sense that

\begin{equation*}{
\frac{R^{\theta}(\Tree_{n})}{n} \xrightarrow[n \to +\infty]{(\P)} 0.
}\end{equation*}
This proves \Cref{cor:RecCauchy}.

\subsection{\texorpdfstring{At the critical dimension \(d=\frac{2\alpha}{\alpha-1}\)}{At the critical dimension d=\textbackslash frac\{2\textbackslash alpha\}\{\textbackslash alpha-1\}}}\label{at-the-critical-dimension-dfrac2alphaalpha-1}

\subsubsection{Recurrent behaviour at critical dimension}\label{recurrent-behaviour-at-critical-dimension}

Whenever \(\frac{2\alpha}{\alpha-1}\) is an integer, \emph{i.e.}
\(\alpha= \frac{m}{m-2}\) for some \(m\geq 4\), our general bound on
\(\max W_{\Tree_{n}}\) obtained in \Cref{prop:MaillardLightTailVersion} is too rough and cannot
tell us anything about the recurrence or transience of $\Tree_{n}$-walks in
\(\Z^{\frac{2\alpha}{\alpha-1}}\). However, in the specific setting of
\(\mu\)-Bienaymé trees where $\mu \in \mathrm{dom}(\alpha)$ with \(\alpha \in (1,2]\), much finer
results are available. In particular a result due to Marzouk \autocite[Theorem
1]{Marzouk2020ScalingLimitsDiscrete} gives the scaling limit of
\(W_{\Tree_{n}}\) in the regime that interests us. We first use this to discuss the exact asymptotic of \(\max W_{\Tree_{n}}\), then we turn to transience and recurrence at criticality and prove \Cref{prop:RecCriticalGaussian,prop:NoUniversalityAtCriticality}.

Actually, Marzouk dealt with the case \(d=1\). Nonetheless, obtaining the scaling limit in higher dimension is a simple
corollary of the case \(d=1\) as this one-dimensional result directly gives tightness for the
multidimensional walk, so we directly consider the multidimensional
extension of \autocite[Theorem 1]{Marzouk2020ScalingLimitsDiscrete}.
First, we reproduce the necessary and sufficient condition on $\theta$ given by Marzouk to ensure that the $\Tree_n$-walks converge toward a Brownian snake.

\begin{assumption}\label{Assum:MomentAssumJumpMarzouk}
\begin{itemize}
    \item The jump distribution satisfies
    \begin{equation*}{
 \theta\left( \left\{  x \in \Z^d: \lVert x \rVert \geq r^{\frac{\alpha-1}{2\alpha}}\ell(r)^{-1/2}  \right\} \right) = o_{r \to +\infty}\left( \frac{1}{r} \right),
}\end{equation*}
where $\ell$ is the slowly varying function involved in the generalized CLT satisfied by $\mu$ (see \eqref{eq:StableCLT}).
\end{itemize}

\end{assumption}
Observe that since \(\ell\) is slowly varying, we can apply an asymptotic inversion (\Cref{prop:asymptoticInversion}) to express this assumption as a bound on the tail $\theta\left( \left\{  x \in \Z^d: \lVert x \rVert \geq r  \right\} \right)$, and then we see that \Cref{Assum:MomentAssumJumpMarzouk} is almost equivalent to the moment assumption on $\theta$ required for \Cref{cor:RecLowDim}. More precisely, we have in general

\begin{equation*}
    \exists \delta > \frac{2\alpha}{\alpha-1}, \sum_{x \in \Z^d} \lVert x \rVert^{\delta}\theta(x) < +\infty  \Longrightarrow \Cref{Assum:MomentAssumJumpMarzouk} \Longrightarrow \forall \delta < \frac{2\alpha}{\alpha-1}, \sum_{x \in \Z^d} \lVert x \rVert^{\delta}\theta(x) < +\infty,
\end{equation*}
and this may be refined depending on the precise asymptotics of $\ell$. The relevant consequence of \autocite[Theorem
1]{Marzouk2020ScalingLimitsDiscrete} here is that when \Cref{Assum:MomentAssumJumpMarzouk}
holds,

\begin{equation}\label{eq:CVMaximumMarzouk}
  \frac{\max \lVert W_{\Tree_{n}} \rVert }{n^{\frac{\alpha-1}{2\alpha}} \ell(n)^{-1/2} } \text{ converges in distribution.}
\end{equation}

Based on this and Le Gall \& Lin's result from \autocite{LeGallLin2016RangeTreeindexedRandom} about the range in dimension $d=4$ given that $\mu$ has a finite variance, we prove that when $\alpha=2$ and $\theta$ satisfies \Cref{Assum:MomentAssumJumpMarzouk}, $\Tree_n$-walks in $\Z^4$ must be recurrent, which is \Cref{prop:RecCriticalGaussian}.

\begin{proof}[Proof of \Cref{prop:RecCriticalGaussian}.]
As said earlier, the case where $\mu$ has a finite variance has already been covered by Le Gall \& Lin, so we focus on the case where $\alpha=2$ but $\mu$ has infinite variance. In this case, observe that $\ell(n) \to +\infty$ as $n \to +\infty$. Indeed, this is equivalent to $L(n) \to +\infty$ as $n \to +\infty$ (see \Cref{offspring-attracted-to-an-alpha-stable-distribution} and the references therein), but $L(n)$ can easily be related to the variance of $\mu$ when $\alpha=2$. For instance, one can deduce from \eqref{eq:GenMuStable} and the Monotone Density Theorem (\Cref{thm:MonotoneDensity}) that
\begin{equation*}
    1-\gen_{\mu}'(1-s) \sim_{s \to 0}  2sL\left(\frac{1}{s} \right),
\end{equation*}
thus we get 
\begin{equation*}
    L(n) \sim \frac{1-\gen_{\mu}'(1-\frac{1}{n})}{2\frac{1}{n}} \to \frac{\gen_{\mu}''(1)}{2} = +\infty \text{ as } n \to +\infty.
\end{equation*}
Then, since $\ell(n) \to +\infty$ we deduce from \eqref{eq:CVMaximumMarzouk} that 
\begin{equation*}
  \frac{\max \lVert W_{\Tree_{n}} \rVert }{n^{\frac{\alpha-1}{2\alpha}} } \xrightarrow[n \to +\infty]{(\P)} 0.
\end{equation*}
As in the proof of the recurrence criterion \Cref{thm:RecurrenceCriterion}, this gives for $d=\frac{2\alpha}{\alpha-1}$
\begin{equation*}{
\frac{R^{\theta}(\Tree_{n})}{n} \preceq   \left(\frac{\max\lVert W_{\Tree_{n}} \rVert } {n^{1/d}} \right)^{d} \xrightarrow[n \to +\infty]{(\P)}0,
}\end{equation*}
which is the desired result.
\end{proof}

However, this argument is not sufficient to fully understand whether we have
recurrence or transience at critical dimension
\(\frac{2\alpha}{\alpha-1}\) when \(\alpha < 2\), because \(\ell\) may
have wilder behaviour in this case. We can nonetheless assert that the
universal behavior stated in \Cref{prop:RecCriticalGaussian} breaks down when
\(\alpha<2\). Namely, our argument will tell us that recurrence is possible at
critical dimension. But the transience criterion established by Le Gall \& Lin can actually be applied
at critical dimension in some cases, without modifying their proof,
showing that transience is also possible. We quickly recall their
criterion and then prove this absence of universal behaviour, which is \Cref{prop:NoUniversalityAtCriticality}.

\subsubsection{Le Gall and Lin's transience criterion}\label{le-gall-and-lins-transience-criterion-at-critical-dimension}

Consider the visiting probabilities \(p(x) = \P(x \in \Tree^0)\) and \(\widetilde{p}(x) = \P(x \in \widetilde{\Tree}^0)\) defined in \Cref{lem:VisitingProbaV1}. As recalled in the proof of \Cref{prop:DichotomyRecTransienceAdlous}, one can express \(\P(\esc_{+})\) by decomposing \(\Tree_{+}\) as a collection of independent trees grafted on a spine and it leads to 
\begin{equation*}
    \P(\esc_{+})=0 \text{ if and only if } \sum_{k \geq 1} \widetilde{p}(S_{k}) = +\infty \text{ almost surely,}
\end{equation*}
where $S$ is a classical random walk with $\theta$-jumps.

The sufficient condition given by Le Gall \& Lin to ensure that
\(\P(\esc_{+})>0\) is simply to check that
\(\E(\sum_{k \geq 1} \widetilde{p}(S_{k}) ) < +\infty\). They obtained that this expectation is finite when $d> \frac{2\alpha}{\alpha-1}$, and actually their argument can be directly extended to cover the case where $d= \frac{2\alpha}{\alpha-1}$ and the additional \Cref{Assum:SufficientConditionTransience} holds. For completeness, we detail this in the proof of \Cref{prop:NoUniversalityAtCriticality}.

\begin{proof}[Proof of \Cref{prop:NoUniversalityAtCriticality}.]
The second assertion is similar to the proof of \Cref{prop:RecCriticalGaussian}: $L(n) \to +\infty$ is equivalent to $\ell(n) \to +\infty$ and we conclude with \eqref{eq:CVMaximumMarzouk}.

For the first assertion, we prove that given \Cref{Assum:SufficientConditionTransience} we have \(\E(\sum_{k \geq 1} \widetilde{p}(S_{k}) ) < +\infty\), as it implies \(\P(\esc_{+})>0\) as desired. This
expectation may be rewritten as

\begin{equation*}{
\E \left(  \sum_{k \geq 1} \widetilde{p}(S_{k}) \right) = \sum_{k \geq 1} \sum_{x \in \Z^d} \widetilde{p}(x)\P(S_{k}=x) = \left( \sum_{x \in \Z^d} \widetilde{p}(x)g_{\theta}(x) \right) - \widetilde{p}(0).
}\end{equation*}

To upper bound this, combine the relation between $\widetilde{p}$ and $p$ given in \Cref{lem:VisitingProbaV1} with the bound $p(x) \leq g_{\theta}(x)$ (obtained by $1^{\text{st}}$ moment method) to deduce that

\begin{equation}\label{eq:UpperBounfVisitingProbability}
  \widetilde{p}(x) \preceq g_{\theta}(x)^{\alpha-1}L(g_{\theta}(x)^{-1}).
\end{equation}

As a consequence of this and the standard estimate \eqref{eq:GreenEstimates} on $g_{\theta}$ (which holds given the moment assumption on $\theta$), we have

\begin{equation*}{
\E \left(  \sum_{k \geq 1} \widetilde{p}(S_{k}) \right) \leq C\sum_{x \in \Z^d} g_{\theta}(x)^{\alpha} L(g_{\theta}(x)^{-1}) \leq C \sum_{n \geq 1}n^{d-1-\alpha(d-2)}L(n^{d-2}).
}\end{equation*}
where \(d=\frac{2\alpha}{\alpha-1}\). At this critical dimension, we actually have
\(d-2= \frac{2}{\alpha-1}\) and $d=\alpha(d-2)$, thus this sum is finite if and only if
\Cref{Assum:SufficientConditionTransience} holds, which proves \Cref{prop:NoUniversalityAtCriticality}.
\end{proof}

\section{Branching capacity for transient stable BRW}\label{capacity-of-transient-stable-brw-1}

We now focus on the transient situation, \emph{i.e.} in this section we
will always assume that \(d> \frac{2\alpha}{\alpha-1}\), or
\(d=\frac{2\alpha}{\alpha-1}\) and \Cref{Assum:SufficientConditionTransience} holds. Moreover, in
order to study branching capacity we will always assume that the
asymptotics of the Green function \(g_{\theta}\) is given by
\eqref{eq:GreenEstimates}, which requires that the centered distribution \(\theta\)
has at least a finite moment of order \(d-1\) according to Le Gall \& Lin 
\autocite{LeGallLin2016RangeTreeindexedRandom}. This also means that for
\((S_{n})_{n}\) with jumps distributed as \(\theta\),
\(\P(\lVert S_{1} \rVert>\lVert x \rVert) =o(\lVert x \rVert^{2-d})=o(g_{\theta}(x))\).
In a first part, dedicated to \Cref{thm:BranchingCapacityVisitingProba}, we do not require any
further moment assumption on \(\theta\), thus we slightly lightens the
moment assumption required by Zhu in \autocite{Zhu2017CriticalBranchingRandom}
to study branching capacity.

We provide \Cref{tab:notation branching} to help keep track of the notation. Also, we will use the following unusual convention. Given a path $\gamma=(\gamma(0),\ldots,\gamma(k))$ in $\Z^d$, $x,y \in \Z^d$ and a subset $B \subset \Z^d$, we write $\gamma:x \to y$ to denote that the path starts at $\gamma(0)=x$ and ends at $\gamma(k)=y$. Then, we write 
$$\gamma:x \to y \subset B$$ 
to denote that $\gamma$ stays in $B$ \textit{expect maybe at its endpoints}, \textit{i.e.} $\forall 1 \leq i \leq k-1, \gamma(i) \in B$. In particular, this still makes sense when $x \not\in B$ or $y \not\in B$.

\begin{table}[htbp]
\caption{Notation used throughout this section}
\centering
\begin{tabular}{c p{10cm}}
\toprule

  $S=(S_n)_n$ & classical random walks with $\theta$-jumps;\\
 \(H_{A} = \inf\{ n \geq 0 : S_{n} \in A\}\) & hitting time for the set $A \subset \Z^d$ by $S$;\\
\(H_{A}^{+}=\inf \{  n \geq  1 : S_{n} \in A\}\) & first return time for the set $A \subset \Z^d$ by $S$;\\
\( s(\gamma)= \P_{\gamma(0)}(\forall 1 \leq i \leq n, S_i= \gamma(i)\) & probability that $S$ realizes a path $\gamma=(\gamma(0),\ldots,\gamma(k))$ in $\Z^d$; \\
\(g_{\theta}(y,x) = g_{\theta}(x-y) = \sum_{\gamma : y \to x} s(\gamma)\) & Green function of $S$, which satisfies \eqref{eq:GreenEstimates};\\
$\bar{S}=(\bar{S}_n)_n$ & classical random walks with $\bar{\theta}$-jumps where $\bar{\theta}(x)=\theta(-x)$;\\
\(\bar{H}_{A} = \inf\{ n \geq 0 : \bar{S}_{n} \in A\}\) & hitting time for the set $A \subset \Z^d$ by $\bar{S}$; \\
\( \bar{s}(\gamma)= \P_{\gamma(0)}(\forall 1 \leq i \leq n, \bar{S}_i= \gamma(i)\) & probability that $\bar{S}$ realizes a path $\gamma=(\gamma(0),\ldots,\gamma(k))$ in $\Z^d$; \\
$\bar{g_{\theta}}(x)=g_{\theta}(-x)$ & Green function of $\bar{S}$; \\
 \hline

$\preceq$ & lexicographical (or depth-first) order in a tree;\\
$\mathcal{B}_0 \stackrel{(d)}{=} \Tree$ & bush of $\Tree_{-}$ grafted to $c_0$ and distributed as a $\mu$-Bienaymé tree;\\
$\mathcal{B}_{k,-} \stackrel{(d)}{=}\widetilde{\Tree}$ & bush of $\Tree_{-}$ grafted to $c_{-k}$ and distributed as an adjoint $\mu$-Bienaymé tree as defined in \Cref{future-and-past-of-the-root};\\
\(p(x,A)=\P(\Tree^x \cap A \neq \emptyset)\) & probability that a $(\mu,\theta)$-BRW starting from \(x\) visits \(A \subset \Z^d\); \\
\(\widetilde{p}(x,A)=\P(\widetilde{\Tree}^x \cap A \neq \emptyset)\) & probability that an adjoint $(\mu,\theta)$-BRW starting from \(x\) visits \(A\); \\
\(\Omega_{\text{in}}^{A} = \min \{ u \in \Tree: W_{\Tree}(u) \in A \}\) & hitting vertex of \(A\) by a $(\mu,\theta)$-BRW; \\
\(\Omega_{\text{out}}^{A}= \max \{ u \in \Tree: W_{\Tree}(u) \in A \}\) & exit vertex of \(A\) by a $(\mu,\theta)$-BRW;\\
\(p^{0}(x,A) =\P((\Tree\setminus \{ \varnothing \})^x \cap A \neq \emptyset)\) & probability that a $(\mu,\theta)$-BRW starting from \(x\) returns to \(A\);\\
\(\widetilde{p}^{0}(x,A) =\P((\widetilde{\Tree}\setminus \{ \varnothing \})^x \cap A \neq \emptyset)\) & probability that an adjoint $(\mu,\theta)$-BRW starting from \(x\) returns to \(A\);\\
\hline

\(s_{A}(\gamma)=s(\gamma)\prod_{i=0}^{k-1} \bigl(1-\widetilde{p}(\gamma(i),A) \bigr)\) & probability that a random walk with $\theta$-jumps and killing rate $\widetilde{p}(\cdot,A)$ realizes the path \(\gamma\);\\
\(g_A(y,x) = \sum_{\gamma : y \to x} s_A(\gamma)\) & Green function of the previous killed random walk; \\
\(s_{A}^{0}(\gamma)=s(\gamma)\prod_{i=0}^{k-1} \bigl(1-\widetilde{p}^{0}(\gamma(i),A) \bigr)\) & probability that a random walk with $\theta$-jumps and killing rate $\widetilde{p}^{0}(\cdot,A)$ realizes the path \(\gamma\);\\
\(g_A^{0}(y,x) = \sum_{\gamma : y \to x} s_A^{0}(\gamma)\) & Green function of the previous killed random walk; \\
 \bottomrule
\end{tabular}
\label{tab:notation branching}
\end{table}

\subsection{Branching capacity and visiting probability}\label{branching-capacity-and-visiting-probability}

\paragraph{\texorpdfstring{Main ideas leading to
\Cref{thm:BranchingCapacityVisitingProba}}{Main ideas leading to }}\label{main-ideas-leading-to-thm:BranchingCapacityVisitingProba}

As reminded in \Cref{capacity-of-transient-stable-brw}, the branching
capacity is based on a analogy with a Newtonian capacity. In particular,
in order to prove that we have a branching version of \eqref{eq:newtonianCapacityInterpretation}, it
will be useful to keep in mind how to prove \eqref{eq:newtonianCapacityInterpretation} first:
consider a finite subset \(A \subset \Z^{d}\), and \(x \in \Z^{d}\),
then a \emph{last passage (in \(A\)) decomposition} and the Markov
property gives

\begin{equation}\label{eq:ClassicalLastPassageDecomposition}{
\P_{x}(H_{A}<+\infty) = \sum_{a \in A} \sum_{n \in \N}\P_{x}(S_{n}=a)\P_{a}(H_{A}^{+}=+\infty) = \sum_{a \in A} g_{\theta}(a-x)\P_{a}(H_{A}^{+}=+\infty),
}\end{equation}
and for fixed \(a\), one can apply \eqref{eq:GreenEstimates} to get that
\(g_{\theta}(x-a) \sim_{x \to \infty} g_{\theta}(x)\) and obtain
\eqref{eq:newtonianCapacityInterpretation}. Moreover, this approach shows that the asymptotics given
by \eqref{eq:newtonianCapacityInterpretation} is uniform in \(A\) in the following sense: assuming
that \(A \subset B(0,  \frac{\lVert x \rVert}{2})\), every \(a \in A\)
is such that
\(\frac{1}{2}\lVert x \rVert \leq \lVert x-a \rVert\leq  \frac{3}{2}\lVert x \rVert\)
and \eqref{eq:GreenEstimates} leads to

\begin{equation}\label{eq:ClassicalHittingProbaEstimate}{
\P_{x}(H_{A}<+\infty) \asymp g_{\theta}(x)\sum_{a \in A}\P_{a}(H_{A}^{+}=+\infty) = g_{\theta}(x)\mathrm{Cap}(A)
}\end{equation}
where the hidden constants do not depend on \(A\) or \(x\).

\begin{remark}\label{rmk:SymmetryCapacity}

Note that by symmetry, instead of \eqref{eq:ClassicalLastPassageDecomposition} we could also have made
a \emph{first passage (in \(A\)) decomposition} and use the probability
that a \emph{reversed random walk} \((\bar{S}_{n})_{n}\) never comes
back to \(A\) to get a similar result. This shows that the capacity
associated with \(\bar{\theta}\) is the same as the one associated with
\(\theta\).

\end{remark}

For a \(\Tree\)-walk, the Markov property is not available anymore but
we will still perform a \emph{first passage decomposition} and a \emph{last
passage decomposition} in order to prove the branching equivalent of
\eqref{eq:newtonianCapacityInterpretation}. More precisely, we set

\begin{itemize}
\item
  \(p(x,A)=\P(\Tree^x \cap A \neq \emptyset)\) to be the probability
  that a BRW starting from \(x\) visits \(A\).
\item
  \(\widetilde{p}(x,A)=\P(\widetilde{\Tree}^{x} \cap A \neq \emptyset)\)
  to be the corresponding probability for an adjoint BRW.
\end{itemize}

Note that this notation extends the one from
\Cref{lem:VisitingProbaV1} in the following way:
\(p(x)=p(0,\{ x \})=p(-x,\{ 0 \})\) and
\(\widetilde{p}(x)=\widetilde{p}(0,\{ x \})\). We also consider the
vertices corresponding to the hitting time and exit time of the set
\(A\):

\begin{itemize}
\item
  \(\Omega_{\text{in}}^{A} = \min \{ u \in \Tree: W_{\Tree}(u) \in A \}\)
  is the \emph{hitting vertex} of \(A\);
\item
  \(\Omega_{\text{out}}^{A}= \max \{ u \in \Tree: W_{\Tree}(u) \in A \}\)
  is the \emph{exit vertex} of \(A\).
\end{itemize}

Recall that we use lexicographical order on \(\Tree\), and we
arbitrarily set
\(\Omega_{\text{in}}^{A}=\Omega_{\text{out}}^{A} = \partial \not\in \Tree\)
when \(\Tree^{x} \cap A = \emptyset\).

\begin{proposition}[asymptotics of visiting probabilities]\label{prop:AsymptoticsVisitingProba}

For all finite \(A \subset \Z^d\) and \(a \in A\), we have

\begin{equation*}{
\begin{aligned}
\frac{\P(\Tree^{x} \cap A \neq \emptyset \text{ and } W_{\Tree}^{x}(\Omega_{\text{in}}^{A})=a)}{g_{\theta}(x)} &\xrightarrow[x \to \infty]{} \P(\Tree_{-}^{a} \cap A = \emptyset); \\
& \\
\frac{\P(\Tree^{x} \cap A \neq \emptyset \text{ and } W_{\Tree}^{x}(\Omega_{\text{out}}^{A})=a)}{g_{\theta}(x)} &\xrightarrow[x \to\infty]{} \P(\Tree_{+}^{a} \cap A = \emptyset).
\end{aligned}
}\end{equation*}

\end{proposition}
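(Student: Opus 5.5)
We follow Zhu's strategy: decompose along the genealogical line leading to the exit vertex (resp. hitting vertex), which turns the visiting probability into a Green function of a killed random walk. We detail the exit vertex; the hitting vertex is symmetric.

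\textbf{Step 1: spine decomposition.} On the event $\{\Tree^x\cap A\neq\emptyset,\ W^x_\Tree(\Omega_{\text{out}}^A)=a\}$, let $\varnothing=u_0\to u_1\to\dots\to u_k=\Omega^A_{\text{out}}$ be the ancestral line of the exit vertex. Conditionally on this line, the siblings of $u_{i+1}$ come in two groups:
\begin{itemize}
\item those on the left generate independent subtrees with no constraint;
\item those on the right must generate subtrees that avoid $A$.
\end{itemize}
Moreover, the descendants of $\Omega^A_{\text{out}}$ itself must avoid $A$.

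Summing over the offspring number of $u_i$ and the rank of $u_{i+1}$, as in the size-biased computation behind \eqref{eq:GenAdjoint}, the right-hand siblings of each $u_i$ together form an adjoint tree rooted at $u_i$. The left-hand siblings contribute total weight one, by criticality. This gives the exact identity
\[
\P(\dots)=\sum_{\gamma:x\to a\subset A^c}s(\gamma)\prod_{i=0}^{k-1}\bigl(1-\widetilde p(\gamma(i),A)\bigr)\;\P\bigl((\Tree\setminus\{\varnothing\})^a\cap A=\emptyset\bigr).
\]
The path $\gamma$ must also avoid $A$ at intermediate times. I would reorganize this so that the final factor, together with the killing near $a$, becomes the escape probability of the future. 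Concretely, reverse time: read the ancestral line from $a$ back to $x$. The adjoint bushes along it then have exactly the law of the future bushes $\mathcal B_{k,+}$ of $\Tree_\infty$, and the spine walk seen from $c_0$ is a $\bar\theta$-walk. Hence the quantity equals $\sum_\gamma s_A(\gamma)\times(\text{cursor-bush term})$.

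\textbf{Step 2: splitting by distance.} Split the sum according to whether the reversed path has already reached distance $\ll\lVert x\rVert$ from $A$. The part of the path near $a$, weighted by the killing, converges as $x\to\infty$ to $\P(\Tree_+^a\cap A=\emptyset)$. Indeed, this probability is exactly the expectation over a $\bar\theta$-walk $\bar S$ from $a$ of $\prod_{k\ge1}\bigl(1-\widetilde p(\bar S_k,A)\bigr)$ times the $\mathcal B_0$ factor. The constraint that the path avoid $A$ at intermediate times is automatic, because any return of the path to $A$ would make the corresponding vertex itself a later visit.

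The remaining issue is that the path must travel to $x$. For this I would show that the killed Green function satisfies
\[
g_A(y,x)\sim g_\theta(x)\,h(y), \qquad\text{where } h(y)=\P_y\Bigl(\text{the killed walk survives forever}\Bigr).
\]
In other words, killing occurs only near $A$ in the limit.

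\textbf{Step 3 (the main obstacle): comparing the killed Green function with $g_\theta$.} Write
\[
g_\theta(y,x)-g_A(y,x)=\sum_z g_A(y,z)\,\widetilde p(z,A)\,g_\theta(z,x),
\]
and use the bound \eqref{eq:UpperBounfVisitingProbability}, $\widetilde p(z,A)\preceq g_\theta(z)^{\alpha-1}L(g_\theta(z)^{-1})$. Split the sum over $z$ into three regions:
\begin{itemize}
\item $\lVert z\rVert\le\varepsilon\lVert x\rVert$: here $g_\theta(z,x)\sim g_\theta(x)$ uniformly, which produces the main term $g_\theta(x)\bigl(1-h(y)\bigr)$ by transience, i.e.\ summability of $\sum_z g_\theta(y,z)\widetilde p(z,A)$. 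This summability is exactly the finiteness shown in the proof of \Cref{prop:NoUniversalityAtCriticality} when $d>\frac{2\alpha}{\alpha-1}$, or under \Cref{Assum:SufficientConditionTransience}.
\item $\lVert z-x\rVert\le\varepsilon\lVert x\rVert$: here $\widetilde p(z,A)\approx\widetilde p(x)$, and $\sum_{\lVert z-x\rVert\le\varepsilon\lVert x\rVert}g_\theta(z,x)\asymp(\varepsilon\lVert x\rVert)^2$. Since $\lVert x\rVert^2\widetilde p(x)\to0$ when $(\alpha-1)(d-2)>2$, this region contributes $o(g_\theta(x))$. At the critical dimension one needs the summability condition to handle the equality case.
\item The remaining region: bound it by $g_\theta(x)$ times a tail of a convergent sum.
\end{itemize}
The moment assumption on $\theta$ enters only through \eqref{eq:GreenEstimates} and the tail bound $\P(\lVert S_1\rVert>\lVert x\rVert)=o(g_\theta(x))$. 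This bound controls paths that jump over $A$ or land near $x$ in a single step.

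Combining Steps 1–3 and letting $\varepsilon\to0$ gives the exit-vertex limit. For the hitting vertex, take instead the ancestral line of $\Omega^A_{\text{in}}$. Now the left subtrees must avoid $A$, while the right ones and the descendants are free. Reversing the path yields the past bushes $\mathcal B_{k,-}$, hence the limit $\P(\Tree_-^a\cap A=\emptyset)$.
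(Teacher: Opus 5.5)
\textbf{Gap in the exit-vertex case.} Your Step 1 decomposition of the exit vertex is incorrect, and the error carries over to the limit you identify.

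Ancestors precede their descendants in the depth-first order. So maximality of $\Omega^A_{\text{out}}$ imposes nothing on its ancestral line: the vertices $u_0,\dots,u_{k-1}$ may sit in $A$. This already happens for $A=\{a\}$: take a vertex at $a$ one of whose descendants is the last visit to $a$. Your justification that a return to $A$ ``would be a later visit'' has the order backwards; it is the argument for $\Omega^A_{\text{in}}$.

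Only the subtrees of the right siblings are constrained, not the spine vertex $u_i$ itself. The correct identity is therefore
\[
\P\bigl(\Tree^{x}\cap A\neq\emptyset,\ W^{x}_{\Tree}(\Omega^A_{\text{out}})=a\bigr)=\bigl(1-p^{0}(a,A)\bigr)\sum_{\gamma:x\to a}s(\gamma)\prod_{i=0}^{\lvert\gamma\rvert-1}\bigl(1-\widetilde p^{\,0}(\gamma(i),A)\bigr),
\]
summed over \emph{all} paths, where $\widetilde p^{\,0}$ is the probability that an adjoint tree minus its root hits $A$.

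The same issue affects the limit. The spine is not part of $\Tree_+$, so
$\P(\Tree_+^a\cap A=\emptyset)=\bigl(1-p^0(a,A)\bigr)\E_a\bigl[\prod_{k\ge1}\bigl(1-\widetilde p^{\,0}(\bar S_k,A)\bigr)\bigr]$.
Your expression with $\widetilde p$ is the strictly smaller probability that $\Tree_+$ \emph{and} the spine avoid $A$.

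As written, you would be computing the asymptotics of a different event: that $W^x_\Tree(\Omega^A_{\text{out}})=a$ \emph{and} the ancestral path of $\Omega^A_{\text{out}}$ avoids $A$. The repair is to use $\widetilde p^{\,0}$ throughout, which is exactly the paper's $s^0_A$ and $g^0_A$. Since $\widetilde p^{\,0}\le\widetilde p$, your Step 3 bounds carry over unchanged.

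\textbf{Comparison with the paper, once this is fixed.} Your Step 3 is then a genuinely different and leaner route than the paper's.

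The paper first proves the uniform comparison $\sup_{\lVert x\rVert,\lVert y\rVert\ge R}\lvert g_A(x,y)/g_\theta(x,y)-1\rvert\to0$, using the cruder bound $g_\theta-g_A\le\sum_z\widetilde p\,g_\theta\,g_\theta$. It then identifies the limit through the last passage $\sigma_R$ of the spine outside $B(0,R)$. That step requires the overshoot lemma and the tail bound $\P(\lVert S_1\rVert>\lVert x\rVert)=o(g_\theta(x))$.

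You instead keep the killed Green function in the first factor of the exact resolvent identity. This makes the near-origin killing mass equal to $1-h(a)$, where $h(a)$ is the survival probability of the reversed walk from $a$ killed at times $\ge1$. This $h(a)$ is directly $\P(\Tree_-^a\cap A=\emptyset)$. With $\widetilde p^{\,0}$ in place of $\widetilde p$, the corresponding $h^0(a)$ satisfies $(1-p^0(a,A))\,h^0(a)=\P(\Tree_+^a\cap A=\emptyset)$.

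Applied at $y=a$, this already concludes the proof, with the following consequences:
\begin{itemize}
\item Your Step 2 splitting is superfluous.
\item The killing mass being at most $1$ replaces any summability of $\sum_z g_\theta\,\widetilde p$.
\item The jump tails enter only through \eqref{eq:GreenEstimates}.
\end{itemize}
What the paper gains in exchange is a uniform lemma, which it reuses for the visiting probabilities of $\Tree_\infty$.
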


Summing over \(a \in A\) in \Cref{prop:AsymptoticsVisitingProba} directly gives \Cref{thm:BranchingCapacityVisitingProba},
as well as \Cref{prop:BranchingCapacityFromPast} which is the branching equivalent of
\Cref{rmk:SymmetryCapacity}.

An important tool to obtain \Cref{prop:AsymptoticsVisitingProba} is Zhu's observation that the
path from the origin to the first vertex in \(A\) may be expressed by a
non-branching random walk with a killing rate given by a BRW with a
size-bias at the root. 

\begin{proposition}[\protect{\autocite[Proposition~5.1]{Zhu2017CriticalBranchingRandom}}]\label{prop:ZhuKilledWalk}

On the event \((\Tree^{x} \cap A \neq \emptyset)\), set
\(\Gamma_{\mathrm{in}}=(W_{\Tree}^{x}(\varnothing)=x,\dots, W_{\Tree}^{x}(\Omega_{\mathrm{in}}^{A}))\),
\emph{i.e.} \(\Gamma_{\mathrm{in}}\) is the path in \(\Z^{d}\) starting
from \(x\) and following the steps on the branch from the root to the
hitting vertex of \(A\). Then for any deterministic path \(\gamma\) in
\(\Z^d\) with length $\lvert \gamma \rvert = k$, starting at \(\gamma(0)=x\) and ending at \(\gamma(k) \in A\),

\begin{equation*}{
\P(\Tree^{x} \cap A \neq  \emptyset \text{ and } \Gamma_{\mathrm{in}} = \gamma) = s(\gamma)\prod_{i=0}^{k-1} \bigl( 1-\widetilde{p}(\gamma(i),A) \bigr),
}\end{equation*}
where
\(s(\gamma)=\prod_{i=0}^{k-1} \theta \bigl(\gamma(i+1)-\gamma(i) \bigr)\)
is the probability that a classical random walk with jumps distribution
\(\theta\) realizes the path \(\gamma\).

\end{proposition}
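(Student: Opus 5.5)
We decompose the event according to the spine of $\Tree$ leading from the root $\varnothing$ to the hitting vertex $\Omega_{\mathrm{in}}^{A}$. Fix a deterministic vertex sequence: $\Omega_{\mathrm{in}}^{A}=v$ at height $k$, with ancestral line $\varnothing=v_0\to v_1\to\dots\to v_k=v$. The event $\{\Omega_{\mathrm{in}}^{A}=v,\ \Gamma_{\mathrm{in}}=\gamma\}$ requires three things: the positions along the line equal $\gamma(0),\dots,\gamma(k)$; for each $i\le k-1$, no vertex strictly before $v$ in lexicographical order and descending from $v_i$ but not from $v_{i+1}$ lands in $A$; and $\gamma(i)\notin A$ for $i<k$. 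Here $v_i$ itself precedes $v$, so $\gamma(i)\notin A$ must hold; this part is implicitly contained in the killing factor, since $\widetilde p(y,A)=1$ when $y\in A$ because the root of $\widetilde\Tree$ is at $y$.

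The vertices in question are $v_i$ together with the subtrees of the children of $v_i$ that are older siblings of $v_{i+1}$. To handle them we use the branching property, summing over the number $n_i$ of children of $v_i$ and the rank $r_i$ of $v_{i+1}$ among them. The factor $\mu(n_i)$, summed over $n_i$ and $r_i$, has exactly the structure of the size-biased spine: $\sum_{n}\sum_{r\le n}\mu(n)\,(\cdot)$, where the $r-1$ elder siblings carry independent $\mu$-Bienaymé trees started from $\gamma(i)$ plus one $\theta$-jump. Since $\sum_{n\ge r}\mu(n)=\widetilde\mu(r-1)$, summing over $n\ge r$ turns the elder siblings into the offspring of an adjoint root located at $\gamma(i)$.

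Thus the contribution of level $i$ factorizes as
\[
\sum_{j\ge0}\widetilde\mu(j)\,\bigl(1-\E\, p(\gamma(i)+S_1,A)\bigr)^{j}\,\1_{\gamma(i)\notin A}
=\P\bigl(\widetilde\Tree^{\gamma(i)}\cap A=\emptyset\bigr)=1-\widetilde p(\gamma(i),A).
\]
The jump along the edge $v_i\to v_{i+1}$ contributes $\theta(\gamma(i+1)-\gamma(i))$, and these jumps are independent of the elder-sibling subtrees. The younger siblings and the descendants of $v$ are unconstrained, so their total mass is $1$. Multiplying over $i=0,\dots,k-1$ gives $s(\gamma)\prod_{i}(1-\widetilde p(\gamma(i),A))$.

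The remaining point is bookkeeping: we must check that the events for distinct $(n_i,r_i)_i$ are disjoint and exhaust $\{\Gamma_{\mathrm{in}}=\gamma\}$. This holds because $\Omega_{\mathrm{in}}^{A}$ is unique, hence so is its ancestral line. We also need the identity $\sum_{n\ge r}\mu(n)=\widetilde\mu(r-1)$ noted above, and the convention that at $v$ itself the only requirement is $\gamma(k)\in A$.

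The main obstacle is phrasing the branching property cleanly when conditioning on a ranked ancestral line in Ulam–Harris form. I would do it by writing the probability that the tree contains the vertex $v=(r_1,\dots,r_k)$ with given offspring counts as $\prod_i\mu(n_i)$, which is what makes the subtrees of the elder siblings independent $\mu$-Bienaymé trees.
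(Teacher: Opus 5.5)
Your proposal is correct and follows essentially the same route as the paper, which recalls Zhu's argument: decompose along the ancestral line of $\Omega_{\mathrm{in}}^{A}$, apply the branching property, and sum out the younger siblings so that the elder siblings become the offspring of an adjoint root at $\gamma(i)$. Your identity $\sum_{n\ge r}\mu(n)=\widetilde\mu(r-1)$ is the paper's $\sum_{r_i\ge 0}\mu(1+l_i+r_i)=\widetilde\mu(l_i)$, with $l_i$ equal to the rank of $v_{i+1}$ minus one, and your indicator $\1_{\gamma(i)\notin A}$ just makes explicit what the paper absorbs through $\widetilde p(y,A)=1$ for $y\in A$.
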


\begin{remark}
We quickly recall Zhu's argument for completeness. On the event \((\Tree^{x} \cap A \neq \emptyset)\), we consider the genealogical line from $\varnothing$ to $\Omega_{\mathrm{in}}^{A}$ as a finite spine and we consider the children of those vertices located at the left and the right of this spine. We let $(l_i,r_i)_{0 \leq i \leq k-1}$ denote those numbers of children. Then we compute the probability of seeing a path $\gamma=(\gamma(0),\ldots,\gamma(k))$ with $\gamma(i) \in A \Longleftrightarrow i=k$  by summing over all possibilities on $(l_i,r_i)_{0 \leq i \leq k-1}$. We have to take into account that the left children cannot produce a subtree reaching $A$, by definition of $\Omega_{\mathrm{in}}^{A}$, thus we get
\begin{align*}
\P(\Tree^{x} \cap A \neq  \emptyset \text{ and } \Gamma_{\mathrm{in}} = \gamma) &= s(\gamma) \times \prod_{i=0}^{k-1}\sum_{l_i,r_i \geq 0} \mu(1+l_i+r_i) \E\bigl(1-p(\gamma(i)+S_1,A)\bigr)^{l_i} \\
&= s(\gamma) \prod_{i=0}^{k-1}\sum_{l_i\geq 0} \widetilde{\mu}(l_i) \E\bigl(1-p(\gamma(i)+S_1,A)\bigr)^{l_i}, \\
&= s(\gamma)\prod_{i=0}^{k-1} \bigl( 1-\widetilde{p}(\gamma(i),A) \bigr),
\end{align*}
where we simply used that $\widetilde{\mu}(k) = \mu(\llbracket k+1,+\infty\llbracket)$ and a decomposition of $\widetilde{\Tree}$ at its root.
\end{remark}

As mentioned above, the term
$$s_{A}(\gamma):=s(\gamma)\prod_{i=0}^{k-1} \bigl(1-\widetilde{p}(\gamma(i),A) \bigr)$$
is the probability that a killed random walk realizes the path
\(\gamma\), with a killing probability at each site \(y \in \Z^{d}\)
given by \(\widetilde{p}(y,A)\). We will make use of the corresponding
Green function \(g_{A}\) defined by

\begin{equation*}{
g_A(x,y)=\sum_{\gamma:x \to y} s_A(\gamma).
}\end{equation*}

According to \Cref{prop:ZhuKilledWalk} and the fact that \(s_{A}(\gamma)=0\)
whenever the path \(\gamma\) encounters \(A\) before its endpoint, for
any \(a \in A\) we may rewrite the probability we want to estimate as

\begin{equation}\label{eq:HittingProbaAsGreenFunction}{
\P(\Tree^{x} \cap A \neq  0 \text{ and } W^{x}_{\Tree}(\Omega_{\mathrm{in}}^{A})=a) = \sum_{\gamma:x \to a} s_A(\gamma) = g_{A}(x,a).
}\end{equation}

\Cref{prop:ZhuKilledWalk} may also be seen as a spinal decomposition of a \(\Tree\)-walk,
as \(s_{A}(\gamma)\) is the probability that a tree-indexed walk, where
the underlying tree has a finite spine of length
\(\lvert \gamma \rvert\) and adjoint Bienaymé trees grafted to each
vertex of this spine, realizes \(\gamma\) along its spine while the
bushes stay away from \(A\). This event is illustrated in \Cref{fig:SpinalDecomposition} below.

\begin{figure}[!h]
    \centering
    \includegraphics[width=0.4\linewidth]{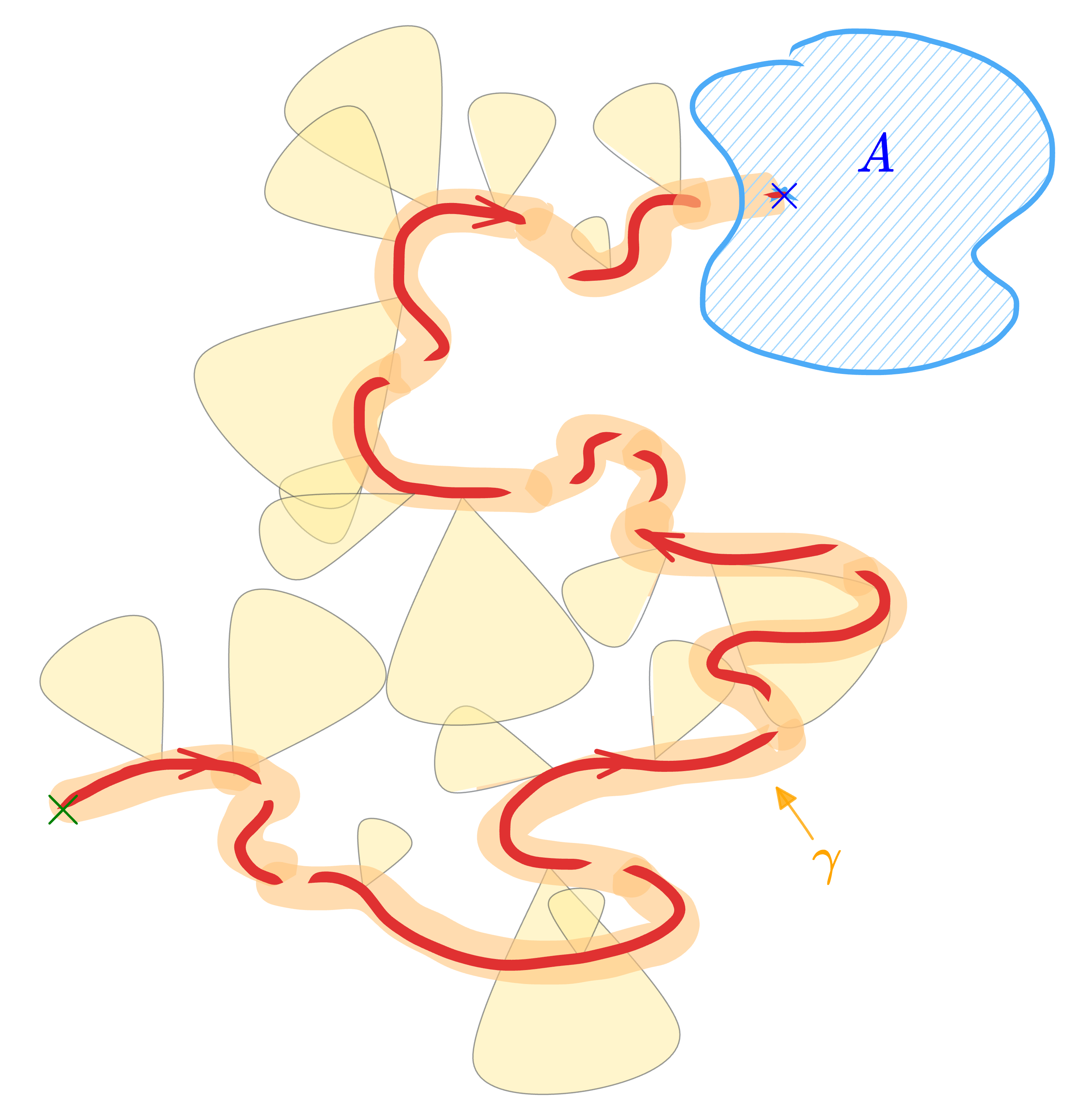}
    \caption{\centering A tree-indexed walk with a spine realizing a path $\gamma \subset A^{c}$ (in red) ending in $A$. The discontinuities of this path emphasize that the walk might make big jumps. Moreover, its adjoint Benaymé trees grafted to the left of the spine only  occupy positions (the yellow bubbles) outside of $A$.}
    \label{fig:SpinalDecomposition}
\end{figure}

This latter interpretation will link
\(\P(\Tree^{x} \cap A \neq  0 \text{ and } W^{x}_{\Tree}(\Omega_{\mathrm{in}}^{A})=a)\)
with \(\Tree_{-}\), by letting the spine grows to infinity, while the
former interpretation enables to make some approximation by the standard
Green function \(g_{\theta}\) through the following \Cref{lem:GreenFunctionApproximation} .
Combining the two of them will give \Cref{prop:AsymptoticsVisitingProba}.

\begin{lemma}\label{lem:GreenFunctionApproximation}

\begin{equation*}{
\lim_{ R \to +\infty } \sup_{\lVert x \rVert, \lVert y \rVert \geq  R } \left\vert\frac{g_A(x,y)}{g_{\theta}(x,y)}-1\right\vert \to 0,
}\end{equation*}
where we recall that \(g_{\theta}(x,y)=g_{\theta}(y-x)\).

\end{lemma}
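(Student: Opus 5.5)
Since $0 \le s_A(\gamma) \le s(\gamma)$, we have $g_A(x,y) \le g_\theta(x,y)$, so only a lower bound is needed. We control the deficit
\begin{equation*}
g_\theta(x,y) - g_A(x,y) = \sum_{\gamma : x \to y} s(\gamma)\Bigl(1-\prod_{i=0}^{k-1}\bigl(1-\widetilde p(\gamma(i),A)\bigr)\Bigr) \le \sum_{\gamma:x\to y} s(\gamma)\sum_{i=0}^{k-1}\widetilde p(\gamma(i),A),
\end{equation*}
using $1-\prod(1-q_i)\le \sum q_i$. Exchanging the sums and splitting each path at the visit to $z=\gamma(i)$ (Markov property) gives
\begin{equation*}
0 \le g_\theta(x,y)-g_A(x,y) \le \sum_{z\in\Z^d} g_\theta(x,z)\,\widetilde p(z,A)\,g_\theta(z,y).
\end{equation*}
It therefore suffices to show that this triple convolution is $o(g_\theta(y-x))$ uniformly in $\lVert x\rVert,\lVert y\rVert \ge R$ as $R\to\infty$.

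Next we bound the killing rate. As in \eqref{eq:UpperBounfVisitingProbability}, a union bound gives $p(z,A)\le \sum_{a\in A} g_\theta(a-z) \preceq g_\theta(z)$ for fixed finite $A$. Combined with \Cref{lem:VisitingProbaV1}(i), extended to sets $A$ by the same generating-function argument with \eqref{eq:GenAdjoint}, this yields $\widetilde p(z,A) \preceq g_\theta(z)^{\alpha-1}L(g_\theta(z)^{-1}) =: k(z)$. By \eqref{eq:GreenEstimates}, $k$ is $-(\alpha-1)(d-2)$-varying in $\lVert z\rVert$. In the transient regime, $(\alpha-1)(d-2) > 2$ when $d>\frac{2\alpha}{\alpha-1}$. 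At equality $(\alpha-1)(d-2)=2$, and \Cref{Assum:SufficientConditionTransience} is exactly what makes the relevant sum finite. The key input is then
\begin{equation*}
\sum_{z} g_\theta(z)\,k(z) < \infty ,
\end{equation*}
which is the finiteness $\E\bigl(\sum_k \widetilde p(S_k)\bigr)<\infty$ already established in the proof of \Cref{prop:NoUniversalityAtCriticality} and in Le Gall \& Lin's criterion. This is the only place where the tree, and hence $\alpha$ and $L$, enters.

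To estimate $\Sigma(x,y):=\sum_z g_\theta(z-x)k(z)g_\theta(y-z)$, set $r=\lVert y-x\rVert$ and split $\Z^d$ into three regions.
\begin{enumerate}[(a)]
\item On $\{\lVert z-x\rVert \ge r/2\}$ we have $g_\theta(z-x)\preceq g_\theta(y-x)$. The contribution is then at most $C g_\theta(y-x)\sum_{z} k(z)g_\theta(y-z)$. By the same argument with $x$ and $y$ exchanged, the region $\{\lVert z-y\rVert\ge r/2\}$ is bounded by $C g_\theta(y-x)\sum_z g_\theta(z-x)k(z)$.
\item This reduces everything to showing $\sup_{\lVert y\rVert\ge R}\sum_z k(z)g_\theta(y-z)\to 0$. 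Split according to $\lVert z\rVert\le \lVert y\rVert/2$ or not.
\begin{itemize}
\item For $\lVert z\rVert \le \lVert y\rVert/2$, we have $g_\theta(y-z)\asymp g_\theta(y)\preceq g_\theta(z)$, so the sum is dominated by the tail $\sum_{\lVert z\rVert\ge \ldots}$. More carefully, it is bounded by $\sum_{z} g_\theta(z)k(z)\1_{\lVert z\rVert\ge M} + g_\theta(y)\sum_{\lVert z\rVert<M}k(z)$, which tends to $0$ after letting $\lVert y\rVert\to\infty$ and then $M\to\infty$.
\item For $\lVert z\rVert>\lVert y\rVert/2$, we have $k(z)\preceq k(y)$. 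The sum over $\lVert z-y\rVert\le \lVert y\rVert$ is then $\preceq k(y)\lVert y\rVert^{2}$ by Potter bounds, and $k(y)\lVert y\rVert^2$ is a regularly varying function of negative index, hence tends to $0$. In the critical case this step instead uses $L\to 0$ along the relevant scales, which follows from summability. The remaining far region is handled by the integrable tail of $g_\theta k$.
\end{itemize}
\end{enumerate}
Thus $\Sigma(x,y)\le \varepsilon(R)\,g_\theta(y-x)$ with $\varepsilon(R)\to0$, which proves the lemma.

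The main obstacle is this uniformity, especially when $x$ and $y$ are both far from $0$ but close to each other, with $r$ small. In that case the factor $g_\theta(y-x)$ is of order one, and we need the smallness to come entirely from $k$ being small near $x$ and $y$. The splitting above is designed for exactly this. The critical case $d=\frac{2\alpha}{\alpha-1}$, where Potter bounds give no polynomial margin and we must rely on \Cref{Assum:SufficientConditionTransience} through the tail of a convergent series, is where I expect the bookkeeping to be most delicate.
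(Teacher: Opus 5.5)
Your proposal is correct and follows essentially the same route as the paper. Both arguments use the first-order bound $g_\theta(x,y)-g_A(x,y)\le\sum_z \widetilde p(z,A)\,g_\theta(x,z)\,g_\theta(z,y)$, together with a regularly varying majorant of the killing rate whose summability against $g_\theta$ is exactly the Le Gall--Lin condition (with \Cref{Assum:SufficientConditionTransience} at criticality). Both then reduce to showing that $\lVert y\rVert^2 k(y)$ and $\sum_z k(z)\,g_\theta(y-z)$ vanish uniformly as $\lVert y\rVert\to\infty$.

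The differences are cosmetic. You use the symmetric split $\{\lVert z-x\rVert\ge r/2\}\cup\{\lVert z-y\rVert\ge r/2\}$, whereas the paper assumes $\lVert y\rVert\le\lVert x\rVert$ and splits on $B(x,\lVert x\rVert/2)$. You also work with $k$ directly via Potter bounds instead of the monotone majorant $\pi$ of \Cref{claim:pi}.
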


This lemma is actually the technical part of the proof, where the
parameter \(\alpha\) will play a role. This comes from a control of the killing
rate involved in \(g_{A}\). Indeed, we have already bounded this killing rate in a special case with \eqref{eq:UpperBounfVisitingProbability}, and this can be extended into
the following lemma. 

\begin{lemma}[First estimates on the asymptotic probability to visit a finite set $A$]\label{lem:FirstEstimatesVisitingProba}

Fix \(x \in \Z^d\) and \(A \subset \Z^d\) finite.

\begin{enumerate}[(i)]
\item
the probability that a branching random walk visits \(A\) is bounded above as follows:
\begin{equation*}{
  p(x,A) \preceq   \#A \times g_{\theta}(x).
  }\end{equation*}

\item
  Visiting probabilities for a branching random walk and an adjoint
  branching random walks are asymptotically related:
\begin{equation*}{
\widetilde{p}(x,A) \sim_{x \to \infty} p(x,A)^{\alpha-1}L(p(x,A)^{-1}).
}\end{equation*}
\end{enumerate}

As a consequence
\(\widetilde{p}(x,A) \preceq g_{\theta}(x)^{\alpha-1}L(g_{\theta}(x)^{-1})\)
(where the hidden constant depends on \(A\)).

\end{lemma}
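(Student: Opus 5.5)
For (i), I will use a first moment bound. By a union bound, $p(x,A) \le \sum_{a\in A} \P(a \in \Tree^x) \le \sum_{a\in A} \E\bigl(\#\{u\in\Tree: W^x_{\Tree}(u)=a\}\bigr)$. By criticality, the expected number of vertices at generation $n$ is $1$. Each vertex at generation $n$ sits at position distributed as $x+S_n$. Hence the expectation equals $\sum_n \P_x(S_n=a) = g_\theta(a-x)$. For fixed finite $A$, the estimate \eqref{eq:GreenEstimates} gives $g_\theta(a-x)\sim g_\theta(x)$ uniformly over $a\in A$ as $x\to\infty$. This yields $p(x,A)\le C\,\#A\, g_\theta(x)$ for large $x$. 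For the finitely many remaining $x$, the bound holds trivially after enlarging the constant, since $g_\theta>0$ by irreducibility; if irreducibility fails, restrict to $x$ with $g_\theta$ positive, as $p$ vanishes otherwise.

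For (ii), I will follow the proof of \Cref{lem:VisitingProbaV1}, decomposing the trees at their roots. For $x\notin A$, every vertex of $\Tree$ other than the root generates an independent $\mu$-Bienaymé subtree started at the parent's position plus a $\theta$-jump. Set $q(x)=\E\bigl(p(x+S_1,A)\bigr)$. Then
\[1-p(x,A)=\gen_\mu\bigl(1-q(x)\bigr),\qquad 1-\widetilde p(x,A)=\gen_{\widetilde\mu}\bigl(1-q(x)\bigr)=\frac{1-\gen_\mu(1-q(x))}{q(x)},\]
using \eqref{eq:GenAdjoint}. This gives the identity $\widetilde p(x,A)=\bigl(q(x)-p(x,A)\bigr)/q(x)$. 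By \eqref{eq:GenMuStable}, $q(x)-p(x,A) = \gen_\mu(1-q)-(1-q) = q^\alpha L(1/q)$. Hence $\widetilde p(x,A)=q(x)^{\alpha-1}L(1/q(x))$ exactly.

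It then remains to show $q(x)\sim p(x,A)$ as $x\to\infty$. The identity above already gives $p(x,A)=q-q^\alpha L(1/q) = q(1+o(1))$ once $q(x)\to 0$. Since $s\mapsto s^{\alpha-1}L(1/s)$ is regularly varying at $0$, it follows that $\widetilde p(x,A)\sim p(x,A)^{\alpha-1}L(p(x,A)^{-1})$.

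The step I expect to be the main obstacle is showing $q(x)\to 0$. By (i), $q(x)\le C\#A\,\E\bigl(g_\theta(x+S_1)\bigr)$, and this is just $C\#A\,(g_\theta(x)-\1_{x=0})$ by the harmonic relation of the Green function, which tends to $0$. So the bound should in fact be painless.

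For the final consequence, combine (i) and (ii). The function $s\mapsto s^{\alpha-1}L(1/s)$ is regularly varying of positive index at $0$. By Potter bounds, or the uniform convergence theorem, the bound $p\le C g_\theta$ transfers to $p^{\alpha-1}L(1/p)\preceq g_\theta^{\alpha-1}L(1/g_\theta)$, with the constant depending on $A$. One subtlety is that $L$ is not monotone. I will handle this by replacing $s^{\alpha-1}L(1/s)$ with an asymptotically equivalent monotone function, which exists for regularly varying functions of positive index.
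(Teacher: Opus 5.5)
Your proposal is correct and follows the paper's proof essentially step by step. It uses the first-moment bound for (i), and the root decompositions $1-p(x,A)=\gen_\mu(1-q(x))$ and $1-\widetilde p(x,A)=\bigl(1-\gen_\mu(1-q(x))\bigr)/q(x)$, which give $\widetilde p(x,A)=q(x)^{\alpha-1}L(q(x)^{-1})$ (exactly, as you rightly note) and $q(x)\sim p(x,A)$. The final consequence then follows from regular variation and Potter bounds.

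One small slip: the harmonic identity is $\E\bigl(g_\theta(w-S_1)\bigr)=g_\theta(w)-\1_{w=0}$, not the version with $x+S_1$. You should therefore use the exact bound $p(y,A)\le\sum_{a\in A}g_\theta(a-y)$ to get $q(x)\le\sum_{a\in A}\bigl(g_\theta(a-x)-\1_{x=a}\bigr)$, although $q(x)\to0$ holds either way by dominated convergence.
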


This last upper bound enables to control the killing rate at a given
point, but then we must integrate this over all possible paths. Zhu
achieved this by first considering some close points \(x,y\) and
excluding some bad paths and then extending this to a more general
situation by showing that the other contributions are negligible. This
approach could work here, but we propose an alternative
proof seemingly more straightforward where we directly integrate over all possible
paths.

A last technical point to get \Cref{prop:AsymptoticsVisitingProba} is the control of the big
jumps of the walk. We will need the following result.

\begin{lemma}[Overshoot Lemma]\label{lem:OvershootLemma1}

There is a constant \(C\) such that for all $a \in \Z^d$ and all radius \(r> \lVert a \rVert, r'>0\), 
\begin{equation*}{
\sum_{y:\lVert y \rVert > r+r' } \sum_{\gamma:y \to a \subset B(0,r)} s(\gamma) \leq  Cr^{2}\P_{0}(\lVert S_{1} \rVert > r').
}\end{equation*}

\end{lemma}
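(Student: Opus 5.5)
The plan is to decompose each path according to its first step. This separates the single large jump out of $B(0,r+r')$ from the time the walk then spends inside $B(0,r)$, and the latter is an expected exit time of order $r^{2}$.

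\textbf{Step 1: split off the first jump.} Let $\gamma:y\to a\subset B(0,r)$ have length $k$. If $k=1$, then $s(\gamma)=\theta(a-y)$. Since $\lVert a\rVert<r$ and $\lVert y\rVert>r+r'$, we have $\lVert a-y\rVert>r'$. Summing over $y$ therefore gives at most $\P_0(\lVert S_1\rVert>r')$.

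If $k\ge 2$, set $z=\gamma(1)\in B(0,r)$. Then $s(\gamma)=\theta(z-y)\,s(\gamma')$, where $\gamma'=(\gamma(1),\dots,\gamma(k)):z\to a$ has all its points except possibly the endpoint $a$ inside $B(0,r)$. Exchanging the sums over $y$ and $z$, and using $\lVert z-y\rVert>r'$ for every $z\in B(0,r)$ and $\lVert y\rVert>r+r'$, gives
\begin{equation*}
\sum_{y:\lVert y\rVert>r+r'}\ \sum_{\gamma:y\to a\subset B(0,r)} s(\gamma)\ \le\ \P_0(\lVert S_1\rVert>r')\Bigl(1+\sum_{z\in B(0,r)} g_{B(0,r)}(z,a)\Bigr),
\end{equation*}
where $g_{B(0,r)}(z,a)$ is the Green function of $S$ killed when it exits $B(0,r)$. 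Here the endpoint $a$ is allowed because $a\in B(0,r)$ anyway.

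\textbf{Step 2: reduce to an expected exit time.} Reversing paths gives $s(\gamma)=\bar s(\bar\gamma)$, so $g_{B(0,r)}(z,a)=\bar g_{B(0,r)}(a,z)$ for the reversed walk $\bar S$. Hence
\begin{equation*}
\sum_{z\in B(0,r)} g_{B(0,r)}(z,a)=\E_a\bigl(\bar\tau_r\bigr),\qquad \bar\tau_r=\inf\{n\ge0:\bar S_n\notin B(0,r)\}.
\end{equation*}
It therefore suffices to show that $\E_a(\bar\tau_r)\le Cr^{2}$ uniformly in $a\in B(0,r)$ and $r\ge1$. For $r<1$ the ball contains only finitely many points, and the bound is adjusted by the constant.

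\textbf{Step 3: the uniform exit-time bound (main obstacle).} The optional-stopping argument with the martingale $\lVert\bar S_n\rVert^{2}-nd$ needs control of the overshoot $\lVert\bar S_{\bar\tau_r}\rVert$, which we do not have without extra moments. I would instead use a renewal argument that relies only on the finite covariance of $\theta$.

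First I would establish the following: there are $K,c>0$ such that for every $r\ge1$ and every starting point $z\in B(0,r)$, $\P_z(\lVert\bar S_{\lceil Kr^{2}\rceil}-z\rVert>2r)\ge c$. This should follow from the central limit theorem applied to $\bar S_n/\sqrt n$. Since the increment does not depend on $z$, the bound is automatically uniform in $z$. Uniformity over large $r$ comes from choosing $K$ large so that the Gaussian limit puts mass at least $2c$ outside $B(0,2/\sqrt K)$. Small values of $r$ are finitely many cases once $K$ is fixed.

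When the displacement exceeds $2r$, the walk must have left $B(0,r)$ by time $\lceil Kr^{2}\rceil$. Applying the Markov property at the times $j\lceil Kr^{2}\rceil$ then gives $\P_a(\bar\tau_r>j\lceil Kr^{2}\rceil)\le(1-c)^{j}$, and so $\E_a(\bar\tau_r)\le \lceil Kr^{2}\rceil/c\le C r^{2}$. Plugging this into Step 1 yields the bound of \Cref{lem:OvershootLemma1} with a constant independent of $a$, $r$ and $r'$.
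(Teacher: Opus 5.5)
Your proof is correct, and your first two steps are the paper's decomposition seen from the other end. The paper reverses time and splits at the last position $z\in B(0,r)$ of $\bar S$ before it jumps out of the ball; in forward time this is exactly your $\gamma(1)$.

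The two arguments differ only in how the resulting occupation sum is bounded.
\begin{itemize}
\item The paper simply discards the killing, using $\sum_{\gamma:a\to z\subset B(0,r)}\bar s(\gamma)\le g_\theta(z,a)$. It then gets $\sum_{z\in B(0,r)}g_\theta(z,a)\le\sum_{\lVert w\rVert<2r}g_\theta(w)\le Cr^2$ from \eqref{eq:GreenEstimates}. This takes two lines, but it relies on transience ($d\ge 3$) and on the Green function asymptotics. Both are standing assumptions in that section. In particular, the overshoot difficulty you anticipate with optional stopping never arises there, because the exit time is not needed at all.
\item You keep the killing and prove $\E_a(\bar\tau_r)\le Cr^2$ from the central limit theorem and a geometric-trials argument. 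This is longer, but it uses only a finite nondegenerate covariance. It therefore works in every dimension, including recurrent ones where the unkilled Green sum over the ball is infinite.
\end{itemize}

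One sentence needs correcting: for $r<1$, no choice of constant rescues a bound of order $r^2$. Take $a=0$ and $r$ so small that $B(0,r)\cap\Z^d=\{0\}$. Then the left-hand side equals $\P_0(\lVert S_1\rVert>r+r')/(1-\theta(0))$. As $r\downarrow 0$ this tends to $\P_0(\lVert S_1\rVert>r')/(1-\theta(0))$, whereas $Cr^2\,\P_0(\lVert S_1\rVert>r')\to 0$. So the lemma should be read for $r\ge 1$, which is how it is used (with $r=R$ large). The paper's proof tacitly needs the same restriction, since $\sum_{z\in B(0,r)}g_\theta(z,a)\ge g_\theta(0)\ge 1$. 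For $r\ge 1$ your argument is complete.
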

Let us first prove \Cref{prop:AsymptoticsVisitingProba} before coming back to those lemmas.

\begin{proof}[Proof of \Cref{prop:AsymptoticsVisitingProba}.]

We first relate the probabilities \(s_{A}\) with a \(\Tree_{-}\)-walk. Choose a large radius \(R>0\) so that
\(A \subset B(0,R)\) and decompose this \(\Tree_{-}\)-walk according to \(\sigma_{R}\)
the first vertex of the spine (first but in \emph{reversed genealogical
order}) which steps outside of \(B(0,R)\). Since
\(\sigma_{R+1} \to \sigma_{R}\) and for any vertex
\(v \in \Tree_{-}\), there is \(R\) such that \(\sigma_{R} \to v\), we
have the monotone convergence

\begin{equation*}{
\P(\Tree_{-}^{a} \cap A = \emptyset) = \lim_{ R \to +\infty } \P(\forall u \in \Tree_{-} \text{ such that } \sigma_{R} \to  u, W_{\Tree_{-}}^{a}(u) \not\in A).
}\end{equation*}

Let us now focus on the subtree formed by the descendants of \(\sigma_{R}\) in
\(\Tree_{-}\), which we consider as a tree rooted at \(\sigma_{R}\) and with a distinguished point still at $c_0$. We recall that the past bushes $\mathcal{B}_{k,-}$ grafted to the spine of \(\Tree_{-}\) are independent and distributed as an adjoint Bienaymé tree $\widetilde{\Tree}$. Thus on the event $(\forall u \in \Tree_{-} \text{ such that } \sigma_{R} \to  u, W_{\Tree_{-}}^{a}(u) \not\in A)$, we
see a tree-indexed walk with a finite spine (the genealogical line from $\sigma_R$ to $c_0$) which realizes along this spine some path \(\gamma \subset B(0,R)\) ending at \(a\) while the
adjoint trees grafted on this spine avoid \(A\). See \Cref{fig:LastPassageDecomposition} for an illustration. By comparing it with \Cref{fig:SpinalDecomposition}, we get that for a given path
\(\gamma\), the probability of seeing this path \(\gamma\) precisely is
\(s_{A}(\gamma)=s(\gamma)\prod_{i=0}^{k-1} \bigl(1-\widetilde{p}(\gamma(i),A) \bigr)\).
A decomposition according to which site \(y\) is occupied by
\(\sigma_{R}\) gives

\begin{equation}\label{eq:DecomposingAvoidAinPast}{
\P(\Tree_{-}^{a} \cap A =\emptyset) = \lim_{ R \to +\infty } \sum_{y: \lVert y \rVert > R} \sum_{\gamma:y \to a \subset B(0,R)} s_{A}(\gamma).
}\end{equation}

\begin{figure}[!h]
    \centering
    \includegraphics[width = 0.75\linewidth]{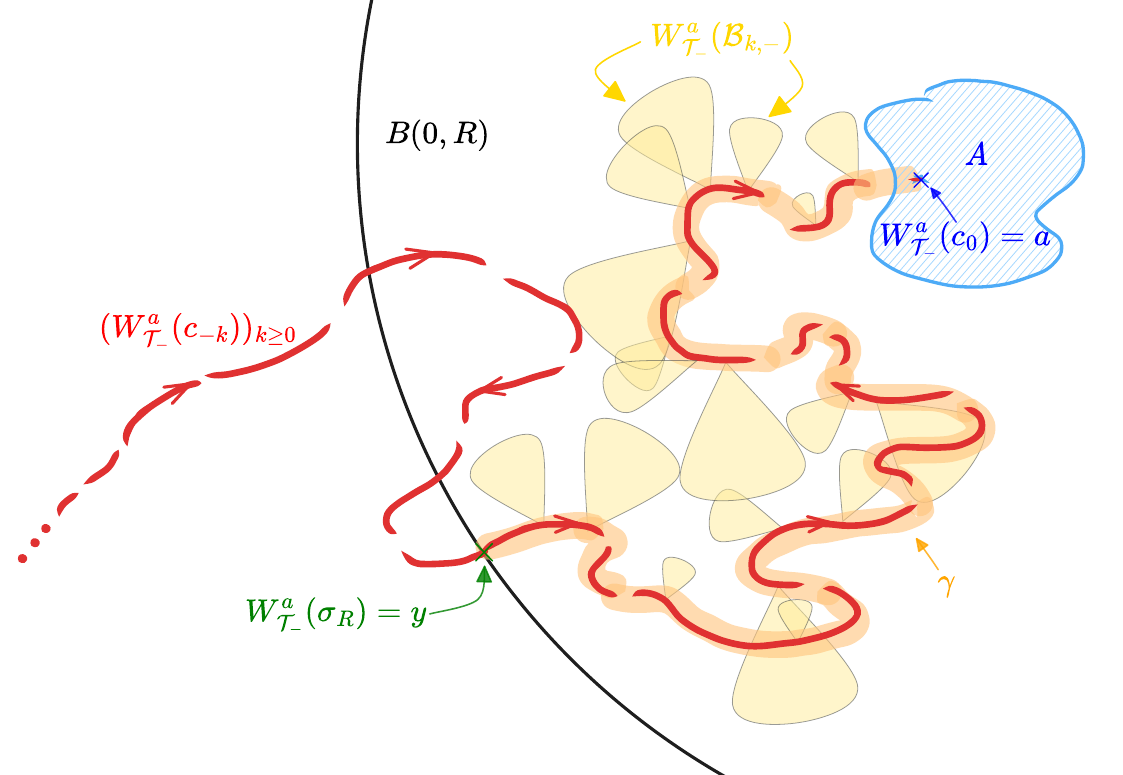}
    \caption{\centering A decomposition of a walk $W^a_{\Tree_{-}}$, indexed by $\Tree_{-}\cup\{c_0\}$, according to its \textit{last passage} outside of the ball $B(0,R)$ along the spine. The red path is the walk along the spine, with arrows to recall the genealogical order along this spine. By construction, it ends at $W_{\Tree_{-}}^{a}(c_0)=a$. The position $W_{\Tree_{-}}^{a}(\sigma_R)=y$ is the last (in genealogical order) position of the walk outside of $B(0,R)$, and $\gamma$ denotes the path realized by the spine from $\sigma_R$ to $c_0$. Finally, the yellow bubbles corresponds to the positions occupied by the past bushes grafted to the left of the spine.}
    \label{fig:LastPassageDecomposition}
\end{figure}

Recalling \eqref{eq:HittingProbaAsGreenFunction}, we can perform a similar decomposition of
\(\P(\Tree^{x} \cap A \neq  0 \text{ and } W^{x}_{\Tree}(\Omega_{\mathrm{in}}^{A})=a)\)
by splitting any path from \(x\) to \(a\) at its last step outside of
\(B(0,R)\) and we get

\begin{equation*}{
g_{A}(x,a) = \sum_{y: \lVert y \rVert > R}g_A(x,y)\sum_{\gamma:y\to a \subset B(0,R)} s_A(\gamma).
}\end{equation*}
Fix some \(\varepsilon>0\). For large \(\lVert x \rVert\), we can take
\(R'=\varepsilon\lVert x \rVert > R\) to apply \Cref{lem:OvershootLemma1} and get

\begin{equation*}{
\sum_{y: \lVert y \rVert > R+\varepsilon \lVert x \rVert }g_A(x,y)\sum_{\gamma:y\to a \subset B(0,R)} s_A(\gamma) \leq  Cg_{\theta}(0)R^{2}\P\left( \lVert S_{1} \rVert > \varepsilon\lVert x \rVert   \right).
}\end{equation*}
In addition, by assumption on $\theta$ we have
\(\P(\lVert S_{1} \rVert>\lVert x \rVert) =o(\lVert x \rVert^{2-d})=o(g_{\theta}(x))\).
We deduce that

\begin{equation*}{
\begin{aligned}
\varlimsup_{ x \to \infty } \frac{g_{A}(x,a)}{g_{\theta}(x)} &= \varlimsup_{ x \to \infty } \sum_{y: R <\lVert y \rVert <R+\varepsilon \lVert x \rVert  } \frac{g_A(x,y)}{g_{\theta}(x)} \sum_{\gamma:y\to a \subset B(0,R)} s_A(\gamma) \\
& \leq  \left( \varlimsup_{ x \to \infty } \sup_{y: R <\lVert y \rVert <R+\varepsilon \lVert x \rVert  } \frac{g_A(x,y)}{g_{\theta}(x)} \right)\times \sum_{y: \lVert y \rVert > R}\sum_{\gamma:y \to a \subset B(0,R)} s_{A}(\gamma) \\
& \leq  (1 - \varepsilon)^{2-d} \times \sum_{y: \lVert y \rVert > R}\sum_{\gamma:y \to a \subset B(0,R)} s_{A}(\gamma).
\end{aligned}
}\end{equation*}
where we used $g_A(x,y) \leq g_{\theta}(x,y)$ and \eqref{eq:GreenEstimates}. First send \(\varepsilon\) to \(0\), and then send \(R\) to \(+\infty\)
with \eqref{eq:DecomposingAvoidAinPast} to obtain

\begin{equation*}{
\varlimsup_{ x \to \infty } \frac{g_{A}(x,a)}{g_{\theta}(x)} \leq  \P(\Tree_{-}^{a} \cap A =\emptyset).
}\end{equation*}

In the same way ,
\begin{equation*}{
\begin{aligned}
\varliminf_{ x \to \infty } \frac{g_{A}(x,a)}{g_{\theta}(x)} 
& \geq  \left( \varliminf_{ x \to \infty } \inf_{y: R <\lVert y \rVert <R+\varepsilon \lVert x \rVert  } \frac{g_A(x,y)}{g_{\theta}(x)} \right)\times \sum_{y: \lVert y \rVert > R}\sum_{\gamma:y \to a \subset B(0,R)} s_{A}(\gamma) \\
& \geq \left( \varliminf_{ x \to \infty }\inf_{y: R <\lVert y \rVert <R+\varepsilon \lVert x \rVert  } \frac{g_{\theta}(x,y)}{g_{\theta}(x)} \right)\times  \inf_{\lVert x' \rVert,\lVert y \rVert \geq R  } \frac{g_{A}(x',y)}{g_{\theta}(x',y)} \times \sum_{y: \lVert y \rVert > R}\sum_{\gamma:y \to a \subset B(0,R)} s_{A}(\gamma) \\
& \geq  (1 + \varepsilon)^{2-d}\times  \inf_{\lVert x' \rVert,\lVert y \rVert \geq R  } \frac{g_{A}(x',y)}{g_{\theta}(x',y)} \times \sum_{y: \lVert y \rVert > R}\sum_{\gamma:y \to a \subset B(0,R)} s_{A}(\gamma).
\end{aligned}
}\end{equation*}
Again, we send \(\varepsilon\) to \(0\), and then thanks to \Cref{lem:GreenFunctionApproximation} and \eqref{eq:DecomposingAvoidAinPast} we send \(R\) to \(+\infty\). It gives the desired limiting lower bound.

\begin{equation*}{
\varliminf_{ x \to \infty } \frac{g_{A}(x,a)}{g_{\theta}(x)} \geq  \P(\Tree_{-}^{a} \cap A =\emptyset).
}\end{equation*}
Thus, recalling \eqref{eq:HittingProbaAsGreenFunction}, we actually have the first assertion of \Cref{prop:AsymptoticsVisitingProba}, namely 
\begin{equation*}
    g_A(x,a) \sim \P(\Tree_{-}^{a} \cap A =\emptyset)\times g_{\theta}(x) \text{ as } x \to \infty.
\end{equation*}

The argument is quite similar to get the asymptotics of
\(\P(\Tree^{x} \cap A \neq \emptyset \text{ and } W_{\Tree}^{x}(\Omega_{\text{out}}^{A})=a)\).
On the event \((\Tree^{x} \cap A \neq  \emptyset)\), one can consider
the path \(\Gamma_{\mathrm{out}}\) realised between the root and
\(\Omega_{\mathrm{out}}^{A}\) and, as shown by Zhu, we have an
equivalent of \Cref{prop:ZhuKilledWalk} :

\begin{equation*}{
\P(\Tree^{x} \cap A \neq  \emptyset \text{ and } \Gamma_{\mathrm{out}} = \gamma) = s(\gamma) \bigl( 1-p^{0}(\gamma(k),A) \bigr) \prod_{i=0}^{k-1} \bigl( 1-\widetilde{p}^{0}(\gamma(i),A) \bigr),
}\end{equation*}
where
\begin{itemize}
    \item $k$ is the length of the path $\gamma$,
    \item \(p^{0}(x,A) =\P((\Tree\setminus \{ \varnothing \})^x \cap A \neq \emptyset)\),
    \item \(\widetilde{p}^{0}(x,A) =\P((\widetilde{\Tree}\setminus \{ \varnothing \})^x \cap A \neq \emptyset)\).
\end{itemize}
Note that for \(y \not\in A\), \(p^{0}(y,A)=p(y,A)\) and the same holds
for adjoint trees.

By setting
\(s_{A}^{0}(\gamma)= s(\gamma) \prod_{i=0}^{k-1} \bigl(1-\widetilde{p^{0}}(\gamma(i),A) \bigr)\)
to interpret this as a killed random walk, this can be written as
\(\P(\Tree^{x} \cap A \neq  \emptyset \text{ and } \Gamma_{\mathrm{out}}=\gamma) = \bigl(1-p^{0}(\gamma(k),A) \bigr)s_{A}^{0}(\gamma)\)
and then

\begin{equation*}{
\P(\Tree^{x} \cap A \neq  0 \text{ and } W^{x}_{\Tree}(\Omega_{\mathrm{out}}^{A})=a) = \sum_{\gamma:x \to a} \bigl(1-p^{0}(\gamma(k),A) \bigr)s^{0}_A(\gamma) = \bigl(1-p^{0}(a,A) \bigr)g^{0}_{A}(x,a).
}\end{equation*}

Moreover, \(\bigl(1-p^{0}(\gamma(k),A) \bigr)s_{A}^{0}(\gamma)\) can
also be interpreted as the probability that some tree-indexed random
walk realizes the path \(\gamma\) on its spine while the Bienaymé tree
grafted to the last vertex on the spine and the adjoint Bienaymé trees
grafted to the rest of the spine do not reach \(A\), except maybe on
the spine itself. By decomposing \(\Tree_{+}\) in the same way that we
decomposed \(\Tree_{-}\) this leads to

\begin{equation*}{
\P(\Tree_{+}^{a} \cap A =\emptyset) = \bigl( 1-p^{0}(a,A) \bigr)\lim_{ R \to +\infty } \sum_{y: \lVert y \rVert > R} \sum_{\gamma:y \to a \subset B(0,R)} s_{A}^{0}(\gamma).
}\end{equation*}

Since \(s_{A}\) and \(s_{A}^{0}\) are almost the same, one can also
adapt \Cref{lem:GreenFunctionApproximation} to the corresponding Green function \(g_{A}^{0}\),
thus we are able to argue in the exact same way to get

\begin{equation*}{
\P(\Tree^{x} \cap A \neq \emptyset \text{ and } W_{\Tree}^{x}(\Omega_{\text{out}}^{A})=a) \sim g_{\theta}(x) \P(\Tree_{+}^{a} \cap A = \emptyset).
}\end{equation*}
This concludes the proof of \Cref{prop:AsymptoticsVisitingProba}.
\end{proof}

We now prove the technical \Cref{lem:OvershootLemma1,lem:FirstEstimatesVisitingProba,lem:GreenFunctionApproximation}.

\begin{proof}[Proof of \Cref{lem:OvershootLemma1}.]
Note that by reversing time, we can express the sum with the reversed
random walk \((\bar{S}_{n})_{n}\) :

\begin{equation*}{
\sum_{y :\lVert y \rVert > r+r' } \sum_{\gamma:y \to a \subset B(0,r)} s(\gamma) = \P_{a}(\bar{S}_{\bar{H}_{>r}} \not\in B(0,r+r')),
}\end{equation*}
where $\bar{H}_{>r}$ is the hitting time of $B(0,r)^c$ by $\bar{S}$.
By considering the position \(z \in B(0,r)\) occupied by the walk before
leaving this ball we get

\begin{equation*}{
\begin{aligned}
\P_{a}(\bar{S}_{\bar{H}_{>r}} \not\in B(0,r+r')) &= \sum_{z \in B(0,r)} \sum_{\gamma:a \to z \subset B(0,r)} \bar{s}(\gamma)\P_{z}(\lVert \bar{S}_{1} \rVert > r+r' ) \\
& \leq \P_{0}(\lVert S_{1} \rVert > r') \sum_{z \in B(0,r)}g_{\theta}(z,a) \\
& \leq C r^{2} \P_{0}(\lVert S_{1} \rVert > r'),
\end{aligned}
}
\end{equation*}
where $\bar{s}(\gamma)$ is the probability that $\bar{S}$ realizes a path $\gamma$. We have also used  that \(\sum_{z \in B(0,r)}g_{\theta}(z,a)\), which is the
total time spent in \(B(0,r)\) by a (reversed) walk started at \(a\), is
bounded above by \(Cr^{2}\) due to \eqref{eq:GreenEstimates}.
\end{proof}

\begin{proof}[Proof of \Cref{lem:FirstEstimatesVisitingProba}.]

The first assertion is a direct application of the \(1^{\text{st}}\)
moment method: for any \(a \in A\), we have
\(\P(a \in \Tree^{x}) \leq  g_{\theta}(a-x)\). For the second one, we
decompose \(\widetilde{\Tree}\) into its subtrees grafted to the root,
and we use that the offspring distribution of this root has generating
function \(\frac{1-\gen_{\mu}(s)}{1-s}\). For \(x \not\in A\) we
get

\begin{equation*}{
1-\widetilde{p}(x,A) = \frac{1-\gen_{\mu}\big(1-\mathbb E(p(x+S_{1},A))\big)}{\mathbb E(p(x+S_{1},A))}. 
}\end{equation*}

Set \(q(x)=\mathbb E(p(x+S_{1},A))\), clearly \(q(x) \to 0\) as
\(\lVert x \rVert \to +\infty\) hence from \eqref{eq:GenMuStable} we get

\begin{equation}\label{eq:AjointVisitingProba}{
\widetilde{p}(x) = \frac{\gen_{\mu}\left(1-q(x)\right)-1 +q(x)}{q(x)} \sim q(x)^{\alpha-1}L\bigl(q(x)^{-1}\bigr).
}\end{equation}

Moreover, the same decomposition at the root applied to \(\Tree\) yields
\(1-p(x,A) = \gen_{\mu}(1-q(x))\), thus \(q(x) \sim p(x,A)\) and we have
the desired equivalent for \(\widetilde{p}\). Combining this with the
first assertion and Potter's bound (\Cref{prop:PotterBounds}) gives the last bound.
\end{proof}

\begin{proof}[Proof of \Cref{lem:GreenFunctionApproximation}.]

First, we clearly have \(g_A(x,y) \leq g_{\theta}(x,y)\), so we actually
have to establish that 

\begin{equation}\label{eq:GoalUpperbound1}
    \text{when } \lVert x \rVert,\lVert y \rVert \geq  R,\ g_{\theta}(x,y)-g_A(x,y) \leq   \varepsilon(R)g_{\theta}(x,y),
\end{equation}
where $\varepsilon$ is a function such that $\varepsilon(R) \to 0$ for large $R$.

Fix some \(R>0\). We note that, independently of this \(R\), we have

\begin{equation*}{
\begin{aligned}
g_{A}(x,y) &= \sum_{n \in \N} \E_{x}\left( \1_{S_{n}=y}\prod_{i=0}^{n-1}(1-\widetilde{p}(S_{i},A)) \right) \\
& \geq  \sum_{n \in \N} \E_{x}\left( \1_{S_{n}=y}\left( 1-\sum_{i=0}^{n-1}\widetilde{p}(S_{i},A) \right) \right) \\
& \geq  g_{\theta}(x,y) - \sum_{i \in \N}  \E_{x} \left( \widetilde{p}(S_{i},A) \sum_{n > i} \1_{_{S_{n}=y}}\right) \\
& \geq  g_{\theta}(x,y) - \sum_{i \in \N} \E_{x}(\widetilde{p}(S_{i},A)g_{\theta}(S_{i},y)).
\end{aligned}
}\end{equation*}
As a consequence,

\begin{equation}\label{eq:BoundGreenDifference}{
g_{\theta}(x,y)-g_A(x,y) \leq  \sum_{i \in \N} \E_{x}(\widetilde{p}(S_{i},A)g_{\theta}(S_{i},y)) \leq  \sum_{z \in \Z^{d}} \widetilde{p}(z,A)g_{\theta}(x,z)g_{\theta}(z,y).
}\end{equation}

To control the left-hand side, we bound $\widetilde{p}(\cdot,A)$ from above with a more regular function $\pi$. More precisely, in sight of \Cref{lem:FirstEstimatesVisitingProba} and \Cref{prop:MonotoneEquivalent} (and \Cref{Assum:SufficientConditionTransience} in the special case $d=\frac{2\alpha}{\alpha-1}$) we have the following statement.

\begin{claim}\label{claim:pi}
There is a non-increasing $-(\alpha-1)(d-2)$-varying function $\pi$ satisfying $\sum_{r \in \N} r \pi(r) < +\infty$ and such that
\begin{equation*}
    \forall z \in \Z^{d},\ \widetilde{p}(z,A) \leq \pi(\lVert z \rVert ).
\end{equation*}
Note that $\pi$ may depend on $A$.
\end{claim}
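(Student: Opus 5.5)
We need to construct $\pi$. The plan is to start from \Cref{lem:FirstEstimatesVisitingProba}, which gives $\widetilde{p}(z,A) \preceq g_{\theta}(z)^{\alpha-1}L(g_{\theta}(z)^{-1})$ for $\lVert z \rVert$ large. By \eqref{eq:GreenEstimates}, $g_{\theta}(z) \asymp \lVert z \rVert^{2-d}$. So we set
\[
f(r) = r^{-(\alpha-1)(d-2)}L(r^{d-2}).
\]
This $f$ is $-(\alpha-1)(d-2)$-varying, since $r \mapsto L(r^{d-2})$ is slowly varying. To pass from $g_\theta(z)$ to $C\lVert z\rVert^{2-d}$ inside the composite function, we use that $s \mapsto s^{\alpha-1}L(1/s)$ is regularly varying at $0$ with positive index $\alpha-1$. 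The uniform convergence theorem, or Potter's bounds (\Cref{prop:PotterBounds}), then gives $g_\theta(z)^{\alpha-1}L(g_\theta(z)^{-1}) \asymp f(\lVert z\rVert)$ uniformly for $\lVert z\rVert \ge r_0$, because the ratio $g_\theta(z)\lVert z\rVert^{d-2}$ stays in a compact subset of $(0,\infty)$. Hence there are $C$ and $r_0$ with $\widetilde{p}(z,A) \le C f(\lVert z \rVert)$ for $\lVert z\rVert\ge r_0$.

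Next we make $f$ monotone. Since its index $-(\alpha-1)(d-2)$ is negative, \Cref{prop:MonotoneEquivalent} provides a non-increasing function $\tilde f \sim f$, which is again $-(\alpha-1)(d-2)$-varying. We then define
\[
\pi(r) = \max\bigl(C' \tilde f(r),\ \1_{r< r_0}\bigr).
\]
Any bounded modification on a compact range keeps monotonicity, provided we take $\pi \ge 1 \ge \widetilde p$ on $[0,r_0)$ and $C'\tilde f(r_0)\le 1$. Increasing $C'$ to absorb the asymptotic constant, $\pi$ is non-increasing, has the same regular variation (it changes only on a bounded set), and dominates $\widetilde{p}(\cdot,A)$ everywhere.

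It remains to check $\sum_r r\pi(r)<\infty$, which reduces to $\sum_r r^{1-(\alpha-1)(d-2)}L(r^{d-2})<\infty$. If $d>\frac{2\alpha}{\alpha-1}$, then $(\alpha-1)(d-2) > 2$ because $(\alpha-1)(d-2)-2 = (\alpha-1)d-2\alpha>0$. The exponent $1-(\alpha-1)(d-2)$ is therefore $<-1$, and a Potter bound $L(r^{d-2}) \le r^{\eta}$ with small $\eta$ gives convergence. At the critical dimension $d=\frac{2\alpha}{\alpha-1}$, we have $(\alpha-1)(d-2)=2$, so the sum is $\sum_r L(r^{d-2})/r$ with $d-2=\frac{2}{\alpha-1}$; this is exactly \Cref{Assum:SufficientConditionTransience}. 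The main delicate step is the uniform replacement of $g_\theta(z)$ by $\lVert z\rVert^{2-d}$ inside $L$. We handle it via the uniform convergence theorem for the regularly varying function $s\mapsto s^{\alpha-1}L(1/s)$, rather than for $L$ alone.
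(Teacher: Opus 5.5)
Your proposal is correct and follows the route the paper indicates: the paper justifies \Cref{claim:pi} in a single line by combining the bound $\widetilde{p}(z,A) \preceq g_{\theta}(z)^{\alpha-1}L(g_{\theta}(z)^{-1})$ from \Cref{lem:FirstEstimatesVisitingProba} with a monotone equivalent (\Cref{prop:MonotoneEquivalent}), and it invokes \Cref{Assum:SufficientConditionTransience} for summability when $d=\frac{2\alpha}{\alpha-1}$. You supply the details the paper leaves implicit, all of which are sound: the Potter/uniform-convergence replacement of $g_{\theta}(z)$ by $\lVert z\rVert^{2-d}$, the modification on a bounded set to keep $\pi$ non-increasing and dominating $\widetilde{p}$, and the exponent computation $(\alpha-1)(d-2)-2=(\alpha-1)d-2\alpha$.
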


Given \eqref{eq:BoundGreenDifference} and \Cref{claim:pi}, to prove \eqref{eq:GoalUpperbound1} amounts to show that when $\lVert x \rVert, \lVert y \rVert \geq  R$ we have

\begin{equation}\label{eq:GoalUpperbound2}
    \sum_{z \in \Z^{d}} \pi(\lVert z \rVert)g_{\theta}(z-x)g_{\theta}(y-z) \leq  \varepsilon(R) g_{\theta}(y-x).
\end{equation}

Without loss of generality, we may assume that $R \leq  \lVert  y \rVert \leq  \lVert  x \rVert$.  In particular  this implies $\lVert x-y \rVert \leq 2\lVert x \rVert$. 

As a preliminary result, we make a simple computation to show the existence of a constant $C$ such that for all $a,x \in \Z^{d}$ with $\lVert a \rVert \leq  2 \lVert x \rVert$, we have

\begin{equation}\label{eq:BoundPartialConvolution}
    \sum_{z \in B\left( 0,\frac{\lVert x \rVert}{2} \right)} g_{\theta}(z)g_{\theta}(a-z) \leq C \lVert x \rVert^{2}g_{\theta}(a).
\end{equation}
Indeed, when $z \in B(a,\lVert a \rVert/2)$ we have $g_{\theta}(z) \leq  C_{1}g_{\theta}(a)$  and else we have $g_{\theta}(a-z) \leq  C_{2}g_{\theta}(a)$, thus we get

\begin{equation*}
    \begin{aligned}
\sum_{z \in B\left( 0,\frac{\lVert x \rVert}{2} \right)} g_{\theta}(z)g_{\theta}(a-z) &\leq C_{1}g_{\theta}(a)\sum_{z \in B\left( a,\frac{\lVert a \rVert}{2} \right)} g_{\theta}(a-z) + C_{2}g_{\theta}(a)\sum_{z \in B\left( 0,\frac{\lVert x \rVert}{2} \right)} g_{\theta}(z) \\
& \leq  C'_{1}g_{\theta}(a)\lVert a \rVert ^{2} +C'_{2}g_{\theta}(a)\lVert x \rVert ^{2} \\
& \leq  Cg_{\theta}(a)\lVert x \rVert ^{2}.
\end{aligned}
\end{equation*}

We then control $\sum_{z \in \Z^{d}} \pi(\lVert z \rVert)g_{\theta}(z-x)g_{\theta}(y-z)$ by splitting the sum according to whether $z \in B\left( x,\frac{\lVert x \rVert}{2} \right)$ or not.

For the first part where $z \in B\left( x,\frac{\lVert x \rVert}{2} \right)$, we have $\pi(\lVert z \rVert) \leq  C \pi(\lVert x \rVert)$ by Potter's bound (\Cref{prop:PotterBounds}), hence we get

\begin{equation*}
    \begin{aligned}
\sum_{z \in B\left( x,\frac{\lVert x \rVert}{2} \right)} \pi(\lVert z \rVert)g_{\theta}(z-x)g_{\theta}(y-z) &\leq C \pi(\lVert x \rVert ) \sum_{z \in B\left( 0,\frac{\lVert x \rVert}{2} \right)} g_{\theta}(z)g_{\theta}((y-x)-z) \\
&  \leq C \lVert x \rVert ^{2}\pi(\lVert x \rVert ) \times  g_{\theta}(y-x),
\end{aligned}
\end{equation*}
where we applied \eqref{eq:BoundPartialConvolution}  to $a=y-x$.

For the remaining part where $z \not\in B\left( x, \frac{\lVert x \rVert}{2} \right)$, we have $\lVert x-z \rVert \geq \frac{\left\lVert  x  \right\rVert}{2} \geq \frac{\lVert x-y \rVert}{4}$ which implies $g_{\theta}(z-x) \leq C g_{\theta}(y-x)$. We may thus write

\begin{equation*}
    \sum_{z \not\in B\left( x, \frac{\lVert x \rVert}{2} \right)} \pi(\lVert z \rVert)g_{\theta}(z-x)g_{\theta}(y-z) \leq  Cg_{\theta}(y-x) \underbrace{ \sum_{z \in \Z^{d}} \pi(\lVert z \rVert )g_{\theta}(y-z) }_{ (\pi*g_{\theta})(y) }.
\end{equation*}

Combining the last two displays yields

\begin{equation*}
    \sum_{z \in \Z^{d}} \pi(\lVert z \rVert)g_{\theta}(z-x)g_{\theta}(y-z) \leq  C \bigl(\lVert x \rVert ^{2}\pi(\lVert x \rVert ) +  (\pi*g_{\theta})(y)\bigr) \times g_{\theta}(y-x),
\end{equation*}
where $(\pi*g_{\theta})(y) = \sum_{z \in \Z^{d}} \pi(\lVert z \rVert )g_{\theta}(y-z)$. Hence it remains to show that 

\begin{equation}\label{eq:GoalUpperbound3}
    \lVert x \rVert ^{2}\pi(\lVert x \rVert ) +  (\pi*g_{\theta})(y) \leq \varepsilon(R),
\end{equation}
for some function $\varepsilon$ going to $0$ at $+\infty$ in order to get \eqref{eq:GoalUpperbound2}. We prove this with $\varepsilon(R) = C \sum_{r \geq 1} \frac{r^{d-1}\pi(r)}{(R \vee r)^{d-2}}$, which does vanish at $+\infty$ by Lebesgue's dominated convergence since $\sum_{r \in \N} r \pi(r) < +\infty$ (recall \Cref{claim:pi}).

First, we simply observe that since $\sum_{r \in \N} r \pi(r) < +\infty$ we can apply Karamata's \Cref{thm:Karamata} to get

\begin{equation*}
    \lVert x \rVert ^{2}\pi(\lVert x \rVert ) \leq C\sum_{r \geq \lVert x \rVert } r\pi(r) \leq  C \sum_{r\geq  R} r\pi(r) \leq  C \sum_{r \geq 1} \frac{r^{d-1}\pi(r)}{(R \vee r)^{d-2}}.
\end{equation*}

Then we deal with $(\pi*g_{\theta})(y)$  by splitting again the sum between $z \in B\left( y, \frac{\lVert y \rVert}{2} \right)$ and $z \not\in B\left( y, \frac{\lVert y \rVert}{2} \right)$. As earlier, for the first part we have

\begin{equation*}
    \sum_{z \in B\left( y, \frac{\lVert y \rVert}{2} \right)} \pi(\lVert z \rVert ) g_{\theta}(y-z) \leq C \lVert y \rVert^{2} \pi(\lVert y \rVert ) \leq C \sum_{r \geq 1} \frac{r^{d-1}\pi(r)}{(R \vee r)^{d-2}}.
\end{equation*}

Finally, on the remaining part, $\lVert y-z \rVert > \frac{\lVert y \rVert}{2}$ implies both $\lVert y-z \rVert > \frac{R}{2}$ and $\lVert y-z \rVert > \frac{\lVert z \rVert}{3}$ hence we have $g_{\theta}(y-z) \leq \frac{C}{(R \vee \lVert z \rVert)^{d-2}}$. This gives

\begin{equation*}
    \sum_{z \not\in B\left( y, \frac{\lVert y \rVert}{2} \right)} \pi(\lVert z \rVert ) g_{\theta}(y-z) \leq C \sum_{r \geq 1} \frac{r^{d-1}\pi(r)}{(R \vee r)^{d-2}},
\end{equation*}
which ends the proof of \eqref{eq:GoalUpperbound3}. We conclude that \eqref{eq:GoalUpperbound2} and thus \eqref{eq:GoalUpperbound1} hold, which is the desired result.
\end{proof}

\subsection{Visiting probabilities for adjoint and infinite branching random walk}\label{visiting-probability-for-adjoint-and-infinite-branching-random-walk}

The main result on the visiting probability of \(\Tree\)-walk,
\Cref{thm:BranchingCapacityVisitingProba}, enables to study the corresponding
\(\Tree_{\infty}\)-walk. Let us first consider the alternative Green
function introduced in \Cref{eq:AlternativeGreenFunction}.

\begin{proof}[Proof of
\Cref{prop:AlternativeGreenFunction}.]

By decomposing our infinite tree into its finite bushes, we see that

\begin{equation}\label{eq:DecompositionAlternativeGreen}{
G(x) = \sum_{k \geq 1} \sum_{y \in \Z^d} \P(\bar{S}_{k}=y)\widetilde{p}(x-y) = (g_{\bar{\theta}}*\widetilde{p})(x) - \widetilde{p}(x).
}\end{equation}
Moreover, \Cref{thm:BranchingCapacityVisitingProba} and \Cref{lem:FirstEstimatesVisitingProba} immediately give that for
every finite set \(A \subset \Z^{d}\),

\begin{equation}\label{eq:AsymptoticAdjointVisitingProba}{
\widetilde{p}(x,A) \sim_{x \to \infty} \bcp(A)^{\alpha-1}g_{\theta}(x)^{\alpha-1}L(g_{\theta}(x)^{-1}).
}\end{equation}
In particular, this is a \(-(\alpha-1)(d-2)\)-varying function. Taking
\(A=\{ 0 \}\) yields the asymptotics of \(\widetilde{p}(x)\), and then
one can directly compute the asymptotics of
\((g_{\bar{\theta}}*\widetilde{p})(x)\) via a generic result on convolution of regularly varying functions, \Cref{lem:ConvolutionRegular}. We get that \((g_{\bar{\theta}}*\widetilde{p})(x)\) is
\(-((\alpha-1)(d-2)-2)\)-varying, thus \(\widetilde{p}(x)\) is
negligible in front of this term and we get \Cref{prop:AlternativeGreenFunction}.
\end{proof}

\begin{proof}[Proof of
\Cref{thm:VisitingProbabilityInfinite}.]

Following Sousi's approach, we decompose \(\Tree_{\infty}\) into a
collection of finite bushes. We can again use \Cref{thm:BranchingCapacityVisitingProba}, or more
precisely its consequence \eqref{eq:AsymptoticAdjointVisitingProba}, to estimate that
\(\Tree_{\infty}\) reaches some finite set \(A \subset \Z^{d}\) in the
future or in the past.

First, we claim that it is sufficient to prove the desired result for a
simpler infinite tree, say \(\Tree_{*}\), where there is a spine and
every vertex on the spine reproduces according to \(\widetilde{\mu}\),
including the cursor. Indeed, since
\(\P(\mathcal{S}^{x} \cap A \neq  \emptyset) \asymp \P(\Tree^{x} \cap A \neq  \emptyset) \asymp \frac{1}{\lVert x \rVert^{d-2}}\)
and
\(\P(\widetilde{\Tree}^{x} \cap A \neq  \emptyset) \asymp \frac{L(\lVert x \rVert^{d-2})}{\lVert x \rVert^{(\alpha-1)(d-2)}}\)
are negligible in front of \(G(x)\), as soon as
\(\P(\Tree_{*}^{x} \cap A \neq  \emptyset) \asymp G(x)\) we have for
large \(x\)

\begin{equation*}{
\P(\Tree_{+}^{x} \cap A \neq  \emptyset) \sim \P(\Tree_{-}^{x} \cap A \neq  \emptyset) \sim \P(\Tree_{*}^{x} \cap A \neq  \emptyset).
}\end{equation*}

We can then decompose this tree \(\Tree_{*}\) according to the first
vertex on the spine whose bush (still denoted by \(\mathcal{B}_{k}\) for
\(k \geq 0\)) hits \(A\). This gives

\begin{equation*}{
\begin{aligned}
\P(\Tree_{*}^{x} \cap A \neq  \emptyset) &= \sum_{n \in \N} \P(\forall k < n, W_{\Tree_{*}}^{x}(\mathcal{B}_{k}) \cap A = \emptyset \text{ and } W_{\Tree_{*}}^{x}(\mathcal{B}_{n}) \cap A \neq   \emptyset) \\
& =\sum_{n \in \N} \sum_{y \in \Z^{d}} \P(W_{\Tree_{*}}^{x}(c_{-n})=y \text{ and } \forall k < n, W_{\Tree_{*}}^{x}(\mathcal{B}_{k}) \cap A = \emptyset)\widetilde{p}(y,A) \\
&= \sum_{y \in \Z^d} \widetilde{p}(y,A) \sum_{\gamma:x \to y} \bar{s}(\gamma) \prod_{i=0}^{\lvert \gamma \rvert-1 } (1-\widetilde{p}(\gamma(i),A)) \\
&=\sum_{y \in \Z^{d}}  \bar{g}_{A}(x,y)\widetilde{p}(y,A),
\end{aligned}
}\end{equation*}
where \(\bar{s}(\gamma)\) is the probability that a walk with
\(\bar{\theta}\)-jumps realizes the path \(\gamma\), and \(\bar{g}_{A}\)
is the Green function for the associated killed random walk.
\Cref{lem:GreenFunctionApproximation} also applies to \(\frac{\bar{g}_{A}}{g_{\bar{\theta}}}\),
and \(g_{\bar{\theta}}(z)=g_{\theta}(-z)\) so we can approximate the
previous sum by the convolution product
\((\widetilde{p}(\cdot,A)*g_{\theta})(x)\). As for \Cref{prop:AlternativeGreenFunction}, the
technical \Cref{lem:ConvolutionRegular} and \eqref{eq:AsymptoticAdjointVisitingProba} gives that

\begin{equation*}{
(\widetilde{p}(\cdot,A)*g_{\theta})(x) \sim \left(\frac{\bcp(A)}{\bcp(\{ 0 \})}\right)^{\alpha-1} (\widetilde{p}*g_{\theta})(x) \sim \left(\frac{\bcp(A)}{\bcp(\{ 0 \})}\right)^{\alpha-1} G(x).
}\end{equation*}

Thus the last piece to finish this proof is to make rigorous the
approximation of $\sum_{y \in \Z^{d}}  \bar{g}_{A}(x,y)\widetilde{p}(y,A)$ by \((\widetilde{p}(\cdot,A)*g_{\theta})(x)\). We do this by showing that

\begin{equation}\label{eq:ApproximationGoalForAlternativeGreen}{
\sum_{y \in \Z^{d}} g_{\bar{\theta}}(x,y)\widetilde{p}(y,A) \left\lvert  1-\frac{\bar{g}_{A}(x,y)}{g_{\bar{\theta}}(x,y)}  \right\rvert = o_{x \to \infty} \big((\widetilde{p}(\cdot,A)*g_{\theta})(x)\big).
}\end{equation}

To obtain this last point, fix some large radius \(R\) and split the sum
between \(\lVert y \rVert \leq  R\) and \(\lVert y \rVert > R\). This
leads to

\begin{equation*}{
\sum_{y \in \Z^{d}} g_{\bar{\theta}}(x,y)\widetilde{p}(y,A) \left\lvert  1-\frac{\bar{g}_{A}(x,y)}{g_{\bar{\theta}}(x,y)}  \right\rvert \leq   C(R) g_{\bar{\theta}}(x) + \sup_{\lVert z \rVert,\lVert y \rVert \geq  R } \left\lvert  1-\frac{\bar{g}_{A}(z,y)}{g_{\bar{\theta}}(z,y)}  \right\rvert (\widetilde{p}(\cdot,A)*g_{\theta})(x),
}\end{equation*}
where \(C(R)\) is a constant with respect to \(x\), which will not
matter since
\(g_{\theta}(x)=o \big((\widetilde{p}(\cdot,A)*g_{\theta})(x)\big)\).
We get

\begin{equation*}{
\varlimsup_{ x \to \infty } \frac{\sum_{y \in \Z^{d}} g_{\bar{\theta}}(x,y)\widetilde{p}(y,A) \left\lvert  1-\frac{\bar{g}_{A}(x,y)}{g_{\bar{\theta}}(x,y)}  \right\rvert}{(\widetilde{p}(\cdot,A)*g_{\theta})(x)} \leq  \sup_{\lVert z \rVert,\lVert y \rVert \geq  R } \left\lvert  1-\frac{\bar{g}_{A}(z,y)}{g_{\bar{\theta}}(z,y)}  \right\rvert,
}\end{equation*}
so by \Cref{lem:GreenFunctionApproximation} sending \(R\) to \(+\infty\) gives \eqref{eq:ApproximationGoalForAlternativeGreen} and
concludes this proof.
\end{proof}

\subsection{\texorpdfstring{\(\alpha\)-stable branching capacity of balls}{\textbackslash alpha-stable branching capacity of balls}}\label{alpha-stable-branching-capacity-of-balls-1}

Let us finally turn to the proof of \Cref{thm:BranchingCapBall}, \textit{i.e.} the estimate on the (\(\alpha\)-stable) branching
capacity of a ball \(B(0,R)\). To do so, we actually prove a branching
analog of \eqref{eq:ClassicalHittingProbaEstimate} in this case, namely that

\begin{proposition}\label{prop:BranchingHittingProbalargeBall}
\begin{equation*}{
\P(\Tree^{x}\cap B(0,R) \neq  \emptyset) \asymp \P(H(\Tree) > R^{2}) \mathrm{Cap}(B(0,R))g_{\theta}(x),
}\end{equation*}
where we recall that \(\asymp\) means here that the two terms have the same order for
large \(x\) and \(R\), up to some constants that do not depend on \(x\)
or \(R\).
\end{proposition}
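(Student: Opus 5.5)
The plan is to prove the upper and lower bounds separately with \emph{first-moment / conditional-expectation} arguments, since second moments are unavailable when $\sigma_\mu^2=+\infty$. Throughout, $\lVert x \rVert \geq 4R$ (say) and $R$ is large. The tree enters only through the tail $\P(H(\Tree)>R^2)$, via Slack's estimate \eqref{eq:SlackResult} and the maximal displacement estimate \eqref{eq:QueueMaxStable}. The random walk enters through \eqref{eq:GreenEstimates}, \eqref{eq:NewtonianCapBall} and \eqref{eq:ClassicalHittingProbaEstimate}, which together give $\P_x(H_{B(0,R)}<\infty)\asymp R^{d-2}g_\theta(x)$. Write $A=B(0,R)$.

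\emph{Step 1: hitting from a point at distance of order $R$.} I first show that, uniformly in $z$ with $\lVert z\rVert\leq 2R$, we have $p(z,A)\asymp \P(H(\Tree)>R^2)$. The upper bound should follow from \eqref{eq:QueueMaxStable} applied to a one-dimensional projection (a BRW from $z$ reaches $A$ only if the projection on the direction of $z$ moves by at least $\lVert z\rVert-R$), together with a separate treatment of $z$ close to $A$ via $p(z,A)\leq 1$.

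Actually, for the global argument I only need the lower bound, and here I plan a clean construction. On the event $\{H(\Tree)\geq R^2\}$, select a vertex at height $R^2$ by a rule depending only on the tree, for instance the leftmost one. The positions along its ancestral line are then a plain $\theta$-random walk of length $R^2$, independent of the tree. By the invariance principle, such a walk started in $B(0,2R)$ hits $B(0,R)$ with probability $\geq c$. Hence $p(z,A)\geq c\,\P(H(\Tree)\geq R^2)$.

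\emph{Step 2: upper bound.} Let $Z$ be the number of vertices $u$ with $W^x_\Tree(u)\in B(0,2R)$. By the first moment method and \eqref{eq:GreenEstimates}, $\E Z\leq \sum_{w\in B(0,2R)}g_\theta(w-x)\leq CR^dg_\theta(x)$. It then suffices to prove
\[
\E\bigl(Z\,\big|\,\Tree^x\cap A\neq\emptyset\bigr)\geq c\,\frac{R^2}{\P(H(\Tree)>R^2)},
\]
which gives $\P(\Tree^x\cap A\neq\emptyset)\leq C R^{d-2}\P(H(\Tree)>R^2)g_\theta(x)$.

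To get this conditional bound, I use Zhu's spinal decomposition (\Cref{prop:ZhuKilledWalk}) at $\Omega^A_{\mathrm{in}}$. Conditionally on the path $\Gamma_{\mathrm{in}}$, two kinds of pieces are independent of the conditioning. The first is the subtree of descendants of $\Omega^A_{\mathrm{in}}$, which is a free $\mu$-Bienaymé tree started at a point of $A$. The second is the collection of bushes grafted on the \emph{right} of the spine, which are free adjoint trees. A free Bienaymé tree started in $A$ has at least $c\,R^2$ vertices in $B(0,2R)$ with probability at least $c'\,\P(H(\Tree)>R^2)$: it survives to generation $R^2$ with that probability, and given survival its generations up to $R^2$ have total size of order $R^2/\P(H(\Tree)>R^2)$, of which a constant fraction stays in $B(0,2R)$. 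This is exactly the ratio I need in expectation.

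The delicate point is to lower-bound the truncated expected mass, meaning the number of vertices within $R^2$ generations of $\Omega^A_{\mathrm{in}}$ that lie in $B(0,2R)$. I would use the standard identity that the expected size of generation $k$ of $\Tree$ conditioned on $H(\Tree)\geq k$ equals $1/\P(H(\Tree)\geq k)$, and combine it with a local CLT to keep a constant fraction of the mass in $B(0,2R)$.

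\emph{Step 3: lower bound.} I run the same scheme in reverse. Let $M$ be the number of vertices $u$ satisfying three conditions: $W^x_\Tree(u)\in B(0,2R)$; the subtree rooted at $u$ has height at least $R^2$; and the leftmost lineage of that subtree at height $R^2$ hits $A$. By the branching property, Step 1 and \eqref{eq:GreenEstimates}, $\E M\geq c R^d g_\theta(x)\P(H(\Tree)\geq R^2)$. Since $\{M\geq1\}\subset\{\Tree^x\cap A\neq\emptyset\}$, it is enough to show $\E(M\mid M\geq 1)\leq CR^2$.

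To do this, I condition on the first such $u$ in lexicographical order. Any other successful vertex is then either a descendant of $u$, or lies in a part of the tree that is independent of the conditioning and whose contribution is bounded by a Green function sum. The first moment of successful descendants of $u$ within $B(0,2R)$ should be at most $C R^2$, because an extra factor $\P(H(\Tree)\geq R^2)$ compensates the size-bias coming from the conditioning.

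\emph{Main obstacle.} I expect the two conditional-expectation estimates in Steps 2 and 3 to be the hard part. Because $\mu$ may have infinite variance, the right-hand adjoint bushes have infinite mean, so every count must be truncated at $R^2$ generations and in space. The estimates then have to be matched precisely with Slack's asymptotics \eqref{eq:SlackResult}, using Potter's bounds (\Cref{prop:PotterBounds}) to absorb the slowly varying corrections. If the nested-lineage bookkeeping in Step 3 becomes too heavy, a fallback is to use the one-dimensional tail \eqref{eq:QueueMaxStable} more directly: bound $p(z,A)$ from both sides for $\lVert z\rVert\asymp R$, and feed these bounds as the killing rate into the Green function representation \eqref{eq:HittingProbaAsGreenFunction}, decomposed at the last spine exit from $B(0,2R)$.
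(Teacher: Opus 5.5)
There is a genuine gap, and it sits exactly where the factor $\P(H(\Tree)>R^2)$ is supposed to come from.

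In Step~2, conditionally on $\Gamma_{\mathrm{in}}$, the descendants of $\Omega^A_{\mathrm{in}}$ form an \emph{unconditioned} $\mu$-Bienaymé tree started at some $a\in A$. By criticality, their expected number in $B(0,2R)$ is $\sum_{w\in B(0,2R)}g_\theta(w-a)\asymp R^2$. Your own heuristic gives the same answer: survival probability $\P(H(\Tree)>R^2)$ times conditional mass $R^2/\P(H(\Tree)>R^2)$ is $R^2$, not $R^2/\P(H(\Tree)>R^2)$. So this piece only yields $\E(Z\mid \Tree^x\cap A\neq\emptyset)\geq cR^2$, hence $p_R(x)\preceq R^{d-2}g_\theta(x)$. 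That is the classical bound, with no tree penalty.

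The inequality you aim for is true, but the missing mass comes from the right siblings along the spine near the ball, and these are \emph{not} free adjoint trees. Given $\Gamma_{\mathrm{in}}=\gamma$, the numbers $(l_i,r_i)$ of left and right children at step $i$ have joint law proportional to $\mu(1+l_i+r_i)(1-q_i)^{l_i}$, where $q_i=\E\bigl(p(\gamma(i)+S_1,A)\bigr)$. Without this tilt their mean would be $\sum_k k\widetilde\mu(k)=\sum_j \frac{j(j-1)}{2}\mu(j)=+\infty$ when $\sigma_\mu^2=+\infty$. That would make $\E(Z\mid\Tree^x\cap A\neq\emptyset)$ infinite, whereas it is at most $\E Z/p_R(x)$. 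With the tilt, the conditional mean of $r_i$ is $q_i^{\alpha-2}L(1/q_i)/\bigl(1-\widetilde p(\gamma(i),A)\bigr)$. Turning this into $R^2/\P(H(\Tree)>R^2)$ would require three things, none of which is in the proposal: an a priori bound $q_i\preceq \P(H(\Tree)>R^2)$, Slack's estimate \eqref{eq:SlackResult}, and a lower bound on the time the \emph{killed} spine spends near the ball.

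Step~3 is only asserted (``should be at most $CR^2$''), and its justification has the same flaw. The ancestral line of the first successful vertex is a size-biased spine, so what hangs to its right is not controlled by a plain Green function sum. Its sibling counts again have mean of order $q^{\alpha-2}L(1/q)$, which near the ball is of order $R^{-2}/\P(H(\Tree)>R^2)$ and is unbounded in $R$ when $\sigma_\mu^2=+\infty$. Balancing this against \eqref{eq:SlackResult} over dyadic scales $\lVert y\rVert=2^kR$ converges only if $(\alpha-1)(d-2)>2$, i.e.\ $d>\frac{2\alpha}{\alpha-1}$. Your sketch never uses this condition, yet the lower bound fails without it. Letting $x\to\infty$ with $R$ fixed, the first half of the proof of \Cref{prop:AsymptoticsVisitingProba} shows that the lower bound would force $\P(\Tree_-^a\cap B(0,R)=\emptyset)>0$ for some $a$, which is false in the recurrent regime.

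In the paper, this is exactly the role of \Cref{lem:UniformBoundAjointVisitingProba}: $\widetilde p_R(y)\leq C(R/\lVert y\rVert)^{\beta}R^{-2}$ with $\beta\in\bigl(2,(\alpha-1)(d-2)\bigr)$. It is obtained from the upper bound via $\widetilde p_R=\Phi(q_R)$, where $\Phi(s)=s^{\alpha-1}L(1/s)$ and $\Phi(\P(H(\Tree)>R^2))\sim R^{-2}/(\alpha-1)$. Summed against the expected time $\preceq (2^iR)^2$ spent in dyadic annuli, it shows that killing before the first entrance into $B(0,\lambda R)$ costs only a constant factor.

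Your fallback points in the paper's direction, but two corrections are needed. First, the killing rate is $\widetilde p(\cdot,A)$, not $p(\cdot,A)$, and it must be controlled at all distances, not only for $\lVert z\rVert\asymp R$. Second, for the upper bound the paper proceeds as follows:
\begin{itemize}
\item it decomposes at the first entrance into $B(0,4R)$ and drops the killing;
\item it uses \eqref{eq:QueueMaxStable} only for $2R<\lVert z\rVert<4R$, since your Step~1 claim for $\lVert z\rVert\leq 2R$ is false near $A$;
\item it handles jumps landing directly in $B(0,2R)$ with \Cref{lem:OvershootLemma2}, which is where $\delta>d$ is needed.
\end{itemize}
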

\Cref{thm:BranchingCapBall} is a direct consequence of this since
\(\bcp(B(0,R))=\lim_{ x \to \infty } \frac{\P(\Tree^{x}\cap B(0,R) \neq  \emptyset)}{g_{\theta}(x)}\)
(\Cref{thm:BranchingCapacityVisitingProba}).

We will again rely on the spinal decomposition \Cref{prop:ZhuKilledWalk}, hence to
ease notation, we set
\begin{itemize}
    \item \(p_{R}(x) =p(x,B(0,R))\) and \(\widetilde{p}_{R}(x) =\widetilde{p}(x,B(0,R))\),
    \item \(s_{R}(\gamma)=s_{B(0,R)}(\gamma)= s(\gamma)\prod_{i=0}^{k-1} \bigl( 1-\widetilde{p}_{R}(\gamma(i))\) and \(g_{R}(x,y)=\sum_{\gamma:x \to y}s_{R}(\gamma)\).
\end{itemize}

Moreover, we will also need to control some big jumps with the following
result.

\begin{lemma}[Second overshoot lemma]\label{lem:OvershootLemma2}

We assume that there is \(\delta >  d\) such that
\(\P(\lVert S_{1} \rVert>u) \preceq u^{-\delta}\). There is a constant
\(C\) such that for \(r' \geq r\) and \(\lVert x \rVert > r+r'\), we
have

\begin{equation*}{
\sum_{y :\lVert y \rVert < r } \sum_{\gamma:x \to y \subset B(0,r+r')} s(\gamma) \leq  C \frac{r'^{d-\delta}}{\lVert x \rVert ^{d-2}}
}\end{equation*}

\end{lemma}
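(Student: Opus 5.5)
The plan is a last-jump decomposition: a path from $x$ to $y\in B(0,r)$ of the relevant kind must end with one big jump, of length at least of order $\lVert z\rVert$, from some point $z$ with $\lVert z\rVert\geq r+r'$ into $B(0,r)$. The cost of that jump is then integrated against the ordinary Green function $g_\theta(x,z)$.

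\emph{Caveat on the reading.} I read the constraint in the overshoot sense: the intermediate points of $\gamma$ lie in $B(0,r+r')^{c}$, so the walk enters $B(0,r)$ in a single jump from outside $B(0,r+r')$. Under the literal reading (intermediate points inside $B(0,r+r')$) the bound looks false. For instance, with $r=r'$ and $\lVert x\rVert$ just above $2r$, the walk can step into the ball by a small jump and then diffuse to $B(0,r)$ without any big jump. That gives a contribution of order $r^{d}r^{2-d}=r^2$, far larger than the claimed $r^{d-\delta}\lVert x\rVert^{2-d}\asymp r^{2-\delta}$. So the argument below is written for the reading in which the bound is meaningful; if the literal statement is really intended, this step is where the claim should be rechecked.

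First, I split each path $\gamma:x\to y$ at its last step. Let $z=\gamma(k-1)$ be the last point before the endpoint $y$. By the Markov property and the fact that $s$ factorizes along steps,
\begin{equation*}
\sum_{y:\lVert y\rVert<r}\ \sum_{\gamma:x\to y} s(\gamma)\ \leq\ \sum_{z:\lVert z\rVert\geq r+r'} g_\theta(x,z)\,\P_z\bigl(\lVert S_1\rVert<r\bigr),
\end{equation*}
where I drop the constraint on the earlier part of the path and keep only the endpoint constraint on $z$. Since $r'\geq r$, any $z$ with $\lVert z\rVert\geq r+r'$ satisfies $\lVert z\rVert-r\geq \lVert z\rVert/2$. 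Hence the tail assumption gives
\begin{equation*}
\P_z(\lVert S_1\rVert<r)\leq \P_0\bigl(\lVert S_1\rVert>\lVert z\rVert/2\bigr)\leq C\lVert z\rVert^{-\delta}.
\end{equation*}
It therefore suffices to bound $\sum_{\lVert z\rVert\geq r'} g_\theta(z-x)\lVert z\rVert^{-\delta}$.

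Next, I split this sum, as in the proof of \Cref{lem:GreenFunctionApproximation}, according to whether $z\in B(x,\lVert x\rVert/2)$.
\begin{itemize}
\item On $B(x,\lVert x\rVert/2)$ we have $\lVert z\rVert\asymp\lVert x\rVert$. The total Green mass of this ball is $\sum_{z\in B(x,\lVert x\rVert/2)}g_\theta(z-x)\leq C\lVert x\rVert^2$, by \eqref{eq:GreenEstimates}, exactly as in \Cref{lem:OvershootLemma1}. This piece is therefore at most $C\lVert x\rVert^{2-\delta}$. Since $\lVert x\rVert\geq r'$ and $\delta>d$, we get $\lVert x\rVert^{2-\delta}=\lVert x\rVert^{2-d}\lVert x\rVert^{d-\delta}\leq \lVert x\rVert^{2-d}r'^{d-\delta}$.
\item Off this ball, $g_\theta(z-x)\leq C\lVert x\rVert^{2-d}$. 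The remaining factor is $\sum_{\lVert z\rVert\geq r'}\lVert z\rVert^{-\delta}\asymp\sum_{\rho\geq r'}\rho^{d-1-\delta}\leq Cr'^{d-\delta}$. This sum converges precisely because $\delta>d$.
\end{itemize}
Adding the two pieces gives the claimed $C\,r'^{d-\delta}\lVert x\rVert^{2-d}$.

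The hypothesis $\delta>d$ is used exactly once, for the summability of $\lVert z\rVert^{-\delta}$ over $\Z^d$ outside $B(0,r')$; the hypothesis $r'\geq r$ is used only to replace $\lVert z\rVert-r$ by $\lVert z\rVert/2$. The only delicate point is the reading of the constraint discussed above.
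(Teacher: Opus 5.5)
Your proof is correct and is essentially the paper's argument: the same last-jump decomposition at the point $z$ outside $B(0,r+r')$, the same bound $\P_z(S_1\in B(0,r))\preceq\lVert z\rVert^{-\delta}$ obtained from $r'\geq r$, and the same splitting of the Green-function sum. The only difference is that you merge the paper's regions $I_2$ and $I_3$ into the complement of $B(x,\lVert x\rVert/2)$, which is harmless since $\sum_{\lVert z\rVert\geq r'}\lVert z\rVert^{-\delta}\preceq r'^{d-\delta}$ already covers both. Your reading of the constraint as $\gamma:x\to y\subset B(0,r+r')^{c}$ is also the one the paper actually proves and uses: its proof rewrites the sum as $\P_x(H_{\leq r+r'}<+\infty,\ S_{H_{\leq r+r'}}\in B(0,r))$, and the lemma is applied to paths in $B(0,4R)^{c}$, so the printed statement contains a typo.
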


With this in hands, we will be able to split \(p_{R}(x)\) into two
parts. The first one accounts for going from \(x\) to close to
\(B(0,R)\) and will cost \(\mathrm{Cap}(B(0,R))g_{\theta}(x)\), while
the other correspond to going from the neighborhood or \(B(0,R)\) to
this ball and will add the penalization \(\P(H(\Tree)>R^{2})\). However,
we will first prove an upper bound for \Cref{prop:BranchingHittingProbalargeBall}, and then use it to get the next
lemma, which gives a better control of the killing rate.

\begin{lemma}\label{lem:UniformBoundAjointVisitingProba}

Fix \(\beta \in \bigl(2, (\alpha-1)(d-2) \bigr)\), which exists as soon
as \(d > \frac{2\alpha}{\alpha-1}\). There is \(C \in (0,+\infty)\) such
that for all \(R>0\) large enough, for all \(y\) such that
\(\lVert y \rVert > R\), one has \begin{equation*}{
\widetilde{p}_{R}(y) \leq  C \times  \left( \frac{R}{\lVert y \rVert} \right)^{\beta}\times   R^{-2}.
}\end{equation*}

\end{lemma}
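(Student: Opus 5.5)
We will derive the bound from the upper half of \Cref{prop:BranchingHittingProbalargeBall}, which the paper announces is proved first, together with the root decomposition of $\widetilde{\Tree}$ used in \Cref{lem:FirstEstimatesVisitingProba}. The upper bound reads $p_R(y) \le C\,\P(H(\Tree)>R^2)\,\mathrm{Cap}(B(0,R))\,g_\theta(y)$ for $\lVert y\rVert > R$. Only uniformity in $R$ and $y$ matters here.

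\textbf{Step 1.} Decompose $\widetilde{\Tree}$ at its root, whose offspring law has generating function $\frac{1-\gen_\mu(s)}{1-s}$. Setting $q_R(y)=\E(p_R(y+S_1))$, this gives
\[
\widetilde p_R(y)=\frac{\gen_\mu(1-q_R(y))-1+q_R(y)}{q_R(y)} = q_R(y)^{\alpha-1}L\bigl(q_R(y)^{-1}\bigr)=:\phi(q_R(y)),
\]
which is \eqref{eq:GenMuStable}. This is an exact identity, not merely an asymptotic, and $\phi$ is nondecreasing because $\gen_\mu$ is convex.

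\textbf{Step 2.} Bound $q_R(y)$ from above. Using \eqref{eq:NewtonianCapBall} and \eqref{eq:GreenEstimates}, the announced upper bound gives
\[
p_R(z)\le C\,P_R\,(R/\lVert z\rVert)^{d-2} \quad\text{for }\lVert z\rVert>R, \qquad P_R:=\P(H(\Tree)>R^2).
\]
For $q_R(y)$ the jump $S_1$ may land inside the ball, where $p_R=1$. That contribution is at most $\P(\lVert S_1\rVert>\lVert y\rVert -R)$, which has polynomial tail of order $\delta>d$. If $\lVert y\rVert\ge 2R$, then $\lVert y\rVert-R\ge\lVert y\rVert/2$ and this term is at most $C\lVert y\rVert^{-\delta}$. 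The remaining part of $q_R(y)$ is $\preceq P_R(R/\lVert y\rVert)^{d-2}$, using that regular variation of $g_\theta$ lets us replace $y+S_1$ by $y$ on the event $\lVert S_1\rVert\le \lVert y\rVert/2$. For $R<\lVert y\rVert<2R$ we use the trivial bound $q_R\le 1$; the inequality then only requires $\widetilde p_R(y)\le C R^{-2}$, which needs separate care (see Step 4).

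\textbf{Step 3.} Plug the bound into $\phi$. By \eqref{eq:SlackResult}, $\phi(P_R)=P_R^{\alpha-1}L(P_R^{-1})\sim \frac{1}{(\alpha-1)R^2}$. Since $\phi$ is $(\alpha-1)$-regularly varying at $0$, Potter's bound (\Cref{prop:PotterBounds}) applied with an exponent slightly below $\alpha-1$ gives, for $t=(R/\lVert y\rVert)^{d-2}\le 1$,
\[
\phi(C P_R t)\le C'\,t^{\beta/(d-2)}\phi(P_R),
\]
with $\beta/(d-2)<\alpha-1$. This yields $C(R/\lVert y\rVert)^\beta R^{-2}$, and explains the choice $\beta<(\alpha-1)(d-2)$.

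For the overshoot term we need $\phi(C\lVert y\rVert^{-\delta})\le C(R/\lVert y\rVert)^\beta R^{-2}$. This holds because
\[
\phi(\lVert y\rVert^{-\delta})\approx \lVert y\rVert^{-\delta(\alpha-1)+o(1)},
\]
and $\delta(\alpha-1)>d(\alpha-1)>2\alpha\ge\beta+2$ roughly, using $\beta<(\alpha-1)(d-2)$ and $\delta>d$. So $\lVert y\rVert^{-\delta(\alpha-1)}\le \lVert y\rVert^{-\beta}R^{\beta-2}$ whenever $\lVert y\rVert\ge R$ and $\beta>2$. Here $\phi$ of a sum is bounded by a constant times the sum of the $\phi$'s, by regular variation.

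\textbf{Step 4 (main obstacle).} The difficulty is the annulus $R<\lVert y\rVert<2R$, and more generally making sure the upper bound of \Cref{prop:BranchingHittingProbalargeBall} holds uniformly down to $\lVert y\rVert$ comparable to $R$. There we want $\widetilde p_R(y)\preceq R^{-2}$. I would obtain it from $\widetilde p_R(y)\le\P(\widetilde{\Tree}\text{ has height}\ge cR^2)+\P(\text{walk on a tree of height}<cR^2\text{ moves distance}\ \ge \lVert y\rVert-R)$. The first term is $\asymp\phi(P_{R})\asymp R^{-2}$ by the same root decomposition applied to heights.

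The second term is problematic when $y$ is at distance $O(1)$ from the ball. In that case I would instead just bound $\widetilde p_R(y)\le 1$. This is not $\preceq R^{-2}$, so I would check whether the announced upper bound's proof already gives $p_R(z)\le C P_R$ uniformly for $\lVert z\rVert\ge R+1$; one candidate is \eqref{eq:QueueMaxStable} applied in the radial direction. If so, Step 3 handles everything with $t\le 1$. If that uniformity turns out to fail, I would restrict the lemma to $\lVert y\rVert\ge 2R$, which should suffice for its later use.
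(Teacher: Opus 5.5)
Your Steps 1--3 follow the paper's argument. The root decomposition gives the exact identity $\widetilde p_R(y)=\Phi(q_R(y))$, with $\Phi(s)=\frac{\gen_\mu(1-s)-(1-s)}{s}$ nondecreasing and $(\alpha-1)$-varying at $0$. The upper half of \Cref{prop:BranchingHittingProbalargeBall} bounds $q_R(y)$ by $CP_R(R/\lVert y\rVert)^{d-2}$. Potter's bound together with $\Phi(P_R)\sim\frac{R^{-2}}{\alpha-1}$ from \eqref{eq:SlackResult} then concludes.

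The only difference is how $q_R$ is bounded. The paper asserts that \eqref{eq:GlobalUpperBoundHittingProba} passes to $q_R$ because $\E(g_\theta(y+S_1))\le g_\theta(y)$. This tacitly uses that bound at every point $y+S_1$, including points on or near the sphere of radius $R$. There the right-hand side is of order $P_R\to0$, while $p_R$ is bounded below. Your separate control of large jumps via the moment of order $\delta>d$ is the right way to make this step sound.

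Your suspicion in Step 4 is correct, and you can state it as a fact: the lemma is false for $y$ adjacent to the ball, so no argument covers the full range $\lVert y\rVert>R$. Since $\widetilde\mu(0)=1-\mu(0)$, the root of $\widetilde{\Tree}$ has a child with probability $\mu(0)>0$, by criticality and $\mu\neq\delta_1$. Pick $e\neq0$ with $\theta(e)>0$, and let $y=-ke$ with $k$ minimal such that $k\lVert e\rVert>R$. Then $\lVert y\rVert>R\ge\lVert y+e\rVert$, so $\widetilde p_R(y)\ge\mu(0)\theta(e)$ for every $R$, whereas the claimed bound is at most $CR^{-2}$. Restricting to $\lVert y\rVert\ge cR$ for some $c>1$ is therefore forced, not a fallback. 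It is also sufficient: the lemma is only used in the lower bound of \Cref{prop:BranchingHittingProbalargeBall}, at points with $\lVert S_i\rVert\ge\lambda R$ and $\lambda$ chosen large.

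Within that range, your Steps 2--3 need two repairs.

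\emph{The split in Step 2.} With $\lVert y\rVert\ge 2R$ and $\lVert S_1\rVert\le\lVert y\rVert/2$ you only know $\lVert y+S_1\rVert\ge R$. There the bound $p_R(z)\le CP_R(R/\lVert z\rVert)^{d-2}$ can fail for the same reason. What the paper's argument actually establishes is this bound for $\lVert z\rVert>2R$, via \eqref{eq:HittingProbaBoundedByHeight} and \Cref{lem:OvershootLemma2}. So take $\lVert y\rVert\ge 4R$ and split at $\lVert S_1\rVert\le\lVert y\rVert/4$. Charge the whole event $\lVert S_1\rVert>\lVert y\rVert/4$, not only landing in the ball, to the overshoot term $C\lVert y\rVert^{-\delta}$. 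Alternatively, \eqref{eq:QueueMaxStable} applied radially gives $p_R(z)\le C_\varepsilon P_R$ for $\lVert z\rVert\ge(1+\varepsilon)R$, which handles any $c>1$.

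\emph{The exponent chain in Step 3.} The chain $d(\alpha-1)>2\alpha\ge\beta+2$ is false in general; for instance $\alpha=2$, $d=10$, $\beta=7$ is admissible. You only need $\delta(\alpha-1)>(d-2)(\alpha-1)>\beta$ together with $R^{\beta-2}\ge1$. Simpler still, $\lVert y\rVert^{-\delta}\le(R/\lVert y\rVert)^{d-2}R^{-\delta}$, and $R^{-\delta}=o(P_R)$ because $\delta>\frac{2}{\alpha-1}$. So the overshoot is absorbed into the main term before $\Phi$ is applied, and your bound on $\Phi$ of a sum is not needed.
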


The computations are quite similar from those leading to \Cref{lem:FirstEstimatesVisitingProba} ,
but here we must also keep track of the dependence in \(R\), hence we
state this result as a new lemma. This will then enable us to get the
corresponding lower bound to prove \Cref{prop:BranchingHittingProbalargeBall}.

\begin{proof}[Proof of \Cref{lem:OvershootLemma2}.]

First note that

\begin{equation*}{
\sum_{y :\lVert y \rVert < r } \sum_{\gamma:x \to y \subset B(0,r+r')} s(\gamma) = \P_{x}(H_{\leq r+r'}<+\infty \text{ and } S_{H_{\leq r+r'}} \in B(0,r)),
}\end{equation*}
where $H_{\leq r+r'}$ is the hitting time of $B(0,r+r')$ by $S$.
By considering the position \(z \not\in B(0,r+r')\) occupied by the walk
before entering this ball, we get 
\begin{equation*}
\P_{x}(H_{\leq r+r'}<+\infty \text{ and } S_{H_{\leq r+r'}} \in B(0,r)) = \sum_{z:\lVert z \rVert > r+r'}  \sum_{\gamma : z \to x \subset B(0,r+r')^c} \P_{z}(S_{1} \in B(0,r)) \times \bar{s}(\gamma)
\end{equation*} 
where $\bar{s}(\gamma)$ is again the probability that a reversed random walk $\bar{S}$ realizes the path $\gamma$. We still have
\(\sum_{\gamma : z \to x \subset B(0,r+r')^c} \bar{s}(\gamma) \leq g_{\theta}(x,z)\),
but since \(r'\geq r\) we also have
\(\lVert z \rVert -r \geq \frac{\left\lVert z  \right\rVert}{2}\), hence
\(\P_{z}(S_{1} \in B(0,r)) \leq  C\times  \P_{0}(\lVert S_{1} \rVert \geq  \lVert z \rVert)\).

Finally, it remains to bound from above
\(\sum_{z:\lVert z \rVert >r+r'} \P_{0}(\lVert S_{1} \rVert \geq \lVert z \rVert )g_{\theta}(x,z)\).
We do it by considering 
\begin{itemize}
\item \(I_{1}=\left\{  z: \lVert z \rVert> r+r' \text{ and } \lVert x-z \rVert \leq  \frac{\left\lVert  x  \right\rVert}{2}  \right\}\);
\item \(I_{2}=\left\{  z: r+r' < \lVert z \rVert \leq  2 \lVert x \rVert \text{ and } \lVert x-z \rVert >  \frac{\left\lVert  x  \right\rVert}{2}  \right\}\);
\item \(I_{3} = \{  z:\lVert z \rVert> 2\lVert x \rVert \}\).
\end{itemize}

For all \(z \in I_{1}\),
\(\lVert z \rVert \geq  \frac{\left\lVert  x  \right\rVert}{2}\), thus
we get the following bound :

\begin{equation*}{
\sum_{z \in I_{1}} \P_{0}(\lVert S_{1} \rVert \geq \lVert z \rVert )g_{\theta}(x,z) \preceq \lVert x \rVert ^{-\delta}\sum_{y:\lVert y \rVert \leq \frac{\lVert x \rVert}{2} } g_{\theta}(y) \preceq \lVert x \rVert ^{2-\delta}.}
\end{equation*}

On the set \(I_{2}\), since \(\lVert z-x \rVert\) is of order
\(\lVert  x \rVert\), we have
\(g_{\theta}(x,z) \leq \frac{C_{2}}{\lVert x \rVert^{d-2}}\) and then

\begin{equation*}{
\sum_{z \in I_{2}} \P_{0}(\lVert S_{1} \rVert \geq \lVert z \rVert )g_{\theta}(x,z) \preceq \frac{1}{\lVert x \rVert ^{d-2}}\sum_{z:\lVert z \rVert > r+r' } \lVert z \rVert^{-\delta}   \preceq \frac{r'^{d-\delta}}{\lVert x \rVert ^{d-2}}.}
\end{equation*}

On the last set \(I_{3}\), \(\lVert x-z \rVert\) is of order
\(\lVert z \rVert\) hence we have

\begin{equation*}{
\sum_{z \in I_{3}} \P_{0}(\lVert S_{1} \rVert \geq \lVert z \rVert )g_{\theta}(x,z) \preceq \sum_{z:\lVert z \rVert  > 2\lVert x \rVert } \lVert z \rVert^{2-\delta-d} \preceq \lVert x \rVert ^{2-\delta}.}
\end{equation*}

Finally, since \(d-\delta<0\) and \(r' < \lVert x \rVert\), we see that
\(\lVert x \rVert^{2-\delta}=\frac{\left\lVert  x  \right\rVert^{d-\delta}}{\lVert x \rVert^{d-2}} \leq  \frac{r'^{d-\delta}}{\lVert x \rVert ^{d-2}},\)
and the desired result follows.
\end{proof}

\begin{proof}[Proof of the upper bound for \Cref{prop:BranchingHittingProbalargeBall}.]
The first step is to recall from \Cref{prop:ZhuKilledWalk} that

\begin{equation*}{
p_{R}(x) := \P(\Tree^{x}\cap B(0,R) \neq  \emptyset) = \sum_{y: \lVert y \rVert \leq R} \sum_{\gamma:x \to y } s_{R}(\gamma).}
\end{equation*}
Then, we decompose any path \(\gamma\) according to the first time it
enters \(B(0,4R)\) as depicted in \Cref{fig:FirstPassageDecomposition}. This gives

\begin{equation}\label{eq:HittingProbaFirstTimeDecomposition}{
p_{R}(x) = \sum_{z: \lVert z \rVert <4R} \left( \sum_{\gamma_{1}:x \to z \subset B(0,4R)^{c}} s_{R}(\gamma_{1}) \right) \P(\Tree^{z}\cap B(0,R) \neq  \emptyset).}
\end{equation}

\begin{figure}[!h]
    \centering
    \includegraphics[width = 0.75\linewidth]{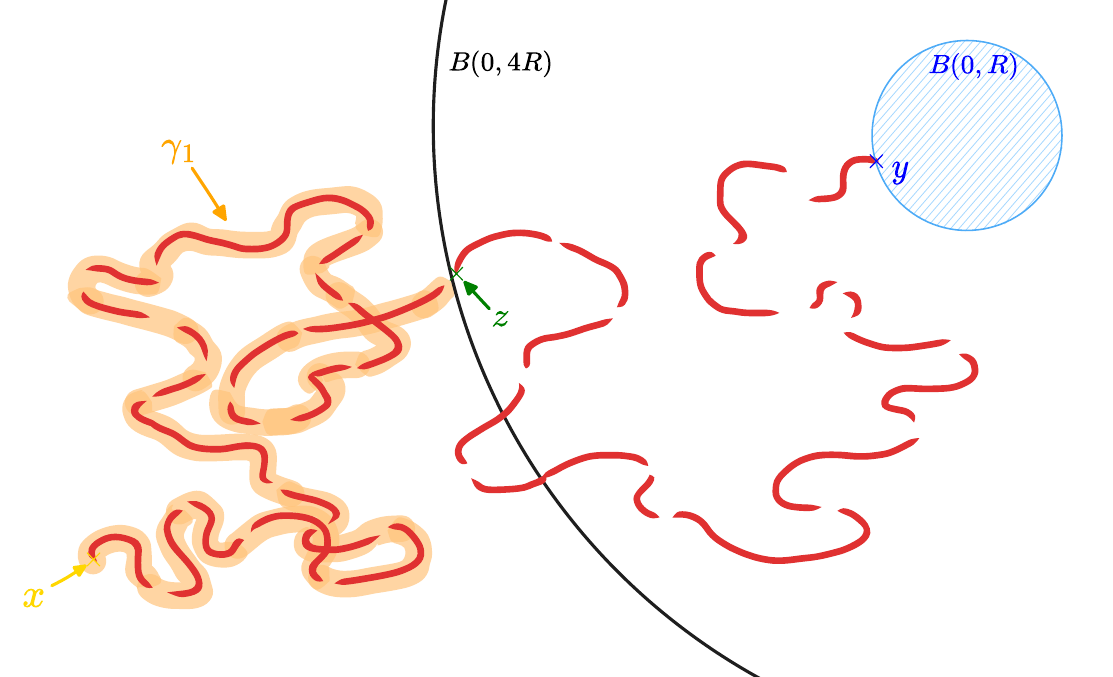}
    \caption{\centering A decomposition of trajectories from $x$ to $B(0,R)$ for the killed random walk. We decompose according to the first position $z \in B(0,4R)$ reached by the trajectories: $\gamma_1$ denotes the path from $x$ to $z$, then the remaining portion can be any path from $z$ to $y \in B(0,R)$.}
    \label{fig:FirstPassageDecomposition}
\end{figure}

The part \(\lVert z \rVert \leq 2R\) actually is negligible due to the
cost of a big jump from \(B(0,4R)^{c}\) to \(B(0,2R)\). Indeed by \Cref{lem:OvershootLemma2}, we
get

\begin{equation*}{
\sum_{z: \lVert z \rVert <2R} \left( \sum_{\gamma_{1}:x \to z \subset B(0,4R)^{c}} s_{R}(\gamma_{1}) \right) \P(\Tree^{z}\cap B(0,R) \neq  \emptyset) \leq C R^{d-\delta} g_{\theta}(x).}
\end{equation*}

We can also control the remaining part by proving the
existence of \(C \in (0,+\infty)\) such that for all \(z\) with
\(2R < \lVert z \rVert <4R\),

\begin{equation}\label{eq:HittingProbaBoundedByHeight}{
\P(\Tree^{z}\cap B(0,R) \neq  \emptyset) \leq   C\times  \P(H(\Tree)>R^{2}).}
\end{equation}
Indeed, on the event \(\Tree^{z}\cap B(0,R) \neq  \emptyset\), since
\(\lVert z \rVert > 2R\) we clearly have
\(\max_{u \in \Tree} \lVert W_{\Tree}^{z}(u) -z\rVert \geq R\). By
equivalence of \(\lVert \cdot \rVert\) and
\(\lVert \cdot \rVert_{\infty}\), we see that
\[\P(\max_{u \in \Tree} \lVert W_{\Tree}^{z}(u) -z\rVert \geq R) \leq  C_{1} \P(\max_{u \in \Tree} \lvert W_{\Tree}'(u)\rvert \geq R),\]
where \(W_{\Tree}'\) is a \(1\)-dimensional \(\Tree\)-walk starting from
\(0\) with centered jumps with a finite moment of order $\delta > d > \frac{2\alpha}{\alpha-1}$. According to \eqref{eq:QueueMaxStable}, in this \(1\)-dimensional situation
\(\P(\max_{u \in \Tree} \lvert W_{\Tree}'(u)\rvert \geq R) \sim C_{2} \P(H(\Tree)>R^{2})\),
so we directly get \eqref{eq:HittingProbaBoundedByHeight}.

Now, we also have the simple bound

\begin{equation*}{
\sum_{z: 2R <  \lVert z \rVert < 4R} \sum_{\gamma:x \to z \subset B(0, 4R)^{c}} s_{R}(\gamma) \leq  \P_{x}(H_{B(0,4R)}<+\infty).}
\end{equation*}

Putting everything together, we get the following upper bound:

\begin{equation}\label{eq:GlobalUpperBoundHittingProba}{
\begin{aligned}
p_{R}(x) &\leq  C\times   \P(H(\Tree) > R^{2})\P_{x}(H_{B(0,4R)}<+\infty) + C R^{d-\delta} g_{\theta}(x) \\
& \leq  C'\times   (\P(H(\Tree) > R^{2}) \mathrm{Cap}(B(0,R))+R^{d-\delta})g_{\theta}(x) \\
& \leq  C''\times \P(H(\Tree) > R^{2}) \mathrm{Cap}(B(0,R))g_{\theta}(x)
\end{aligned}}
\end{equation}
where we used \eqref{eq:ClassicalHittingProbaEstimate} to estimate the hitting probability for the
classical walk, as well as the fact that according to the standard
estimates \eqref{eq:NewtonianCapBall} and \eqref{eq:SlackResult},
\(R \mapsto \P(H(\Tree) > R^{2}) \mathrm{Cap}(B(0,R))\) is bounded below
by a \(\left( d-\frac{2\alpha}{\alpha-1} \right)\)-varying function
hence \(R^{d-\delta}=o(1)\) is negligible in front of this function.
\end{proof}

\begin{proof}[Proof of
\Cref{lem:UniformBoundAjointVisitingProba}.]
Recall from \eqref{eq:AjointVisitingProba}, in \Cref{lem:FirstEstimatesVisitingProba} that an application of the
branching property yields

\begin{equation*}{
\widetilde{p}_{R}(y) = \frac{\gen_{\mu}\left(1-q_{R}(y)\right)-1 +q_{R}(y)}{q_{R}(y)},}
\end{equation*}
where \(q_{R}(y)=\E(p_{R}(y+S_{1}))\). Since
\(\E(g_{\theta}(y+S_{1}))=g_{\theta}(y)-\1_{\{ 0 \}}(y) \leq  g_{\theta}(y)\)
we see that \eqref{eq:GlobalUpperBoundHittingProba} also holds with \(q_{R}\) instead of
\(p_{R}\). Then we use that
\(\Phi:s \to \frac{\gen_{\mu}(1-s)-(1-s)}{s}\) is non-decreasing and
regularly varying (at \(0\)) to obtain

\begin{equation*}{
\widetilde{p}_{R}(y) \leq  \Phi\left( C \left( \frac{R}{\lVert y \rVert } \right)^{d-2} \P(H(\Tree)>R^{2})\right) \leq C' \Phi\left( \left( \frac{R}{\lVert y \rVert } \right)^{d-2} \P(H(\Tree)>R^{2})\right) .}
\end{equation*}

Recall from \eqref{eq:GenMuStable} and \eqref{eq:SlackResult} that \(\Phi\) is
\((\alpha-1)\)-varying (at \(0\)) and

\begin{equation*}{
\Phi \bigl( \P(H(\Tree)>R^{2}) \bigr) \sim \frac{R^{-2}}{\alpha-1}.}
\end{equation*}

By choice of \(\beta\) we have \(\frac{\beta}{d-2} < \alpha-1\), thus by
Potter's bound (\Cref{prop:PotterBounds}) for all \(x<y\) small enough one has
\(\frac{\Phi(x)}{\Phi(y)} \leq 2 (\frac{x}{y})^{\beta/(d-2)}\). Here
\(\left( \frac{R}{\lVert y \rVert } \right)^{d-2} \P(H(\Tree)>R^{2}) \leq \P(H(\Tree)>R^{2}) \xrightarrow[R \to +\infty]{} 0\),
hence for \(R\) large enough we get

\begin{equation*}{
\frac{\Phi\left( \left( \frac{R}{\lVert y \rVert } \right)^{d-2} \P(H(\Tree)>R^{2})\right)}{\Phi \bigl( \P(H(\Tree)>R^{2}) \bigr) } \leq 2 \left( \frac{R}{\lVert y \rVert} \right)^{\beta}.}
\end{equation*}
Combining this with the previous display gives the desired result.
\end{proof}

\begin{proof}[Proof of the lower bound for \Cref{prop:BranchingHittingProbalargeBall}.]

We use the same decomposition as earlier (illustrated by \Cref{fig:FirstPassageDecomposition}) but this time we decompose paths according to their hitting time of \(B(0,\lambda R)\)
for some \(\lambda>2\) to be fixed:

\begin{equation}\label{eq:HittingProbaFirstDecompositionLambda}{
p_{R}(x) = \sum_{z: \lVert z \rVert <\lambda R} \left( \sum_{\gamma_{1}:x \to z \subset B(0,\lambda R)^{c}} s_{R}(\gamma_{1}) \right) \P(\Tree^{z}\cap B(0,R) \neq  \emptyset).}
\end{equation}

Let us first prove that there is \(c>0\) such that, for \(R\) large
enough and for all \(z\) with \(\lVert z \rVert < \lambda R\), we have

\begin{equation}\label{eq:HittingProbaFirstLowerBound}{
\P(\Tree^{z}\cap B(0,R) \neq  \emptyset) \geq  c\times  \P(H(\Tree)>R^{2}).}
\end{equation}
To establish the lower bound, we consider the events
\(\Tree^{z}\cap B(0,R) \neq  \emptyset \text{ and } H(\Tree) > R^{2}\).
When \(H(\Tree)> R^{2}\), it is sufficient to look at the walk along a
branch of maximal length to get an important probability of reaching
\(B(0,R)\). Actually, we can even restrict our attention to a single
vertex at height \(R^{2}\). This leads to

\begin{equation*}{
\P(\Tree^{z}\cap B(0,R) \neq  \emptyset \text{ and } H(\Tree) > R^{2}) \geq  \P(H(\Tree)>R^{2})\P_{z}(S_{R^{2}} \in B(0,R)).}
\end{equation*}
Then, by the local central limit theorem we see that
\(\min_{z:\lVert z \rVert < \lambda R}\P_{z}(S_{R^{2}} \in B(0,R))\) is
bounded away from \(0\): First, we denote by
\(f_{\Sigma_{\theta}^{2}}(x)\) the density of
\(\norm(0,\Sigma_{\theta}^{2})\), and we observe that there is
\(c_{1}>0\) such that for all \(k\) with
\(\lVert k \rVert < (\lambda+1)R\), we have
\(f_{R^{2}\Sigma_{\theta}^{2}}(k) \geq \frac{c_{1}}{R^{d}}\). The local
central limit theorem tells us that there is \(\varepsilon(R^{2})\)
going to \(0\) as \(R \to +\infty\) such that
$$\forall k \in \Z^d,\ \lvert \P_{0}(S_{R^{2}}=k) - f_{R^{2}\Sigma_{\theta}^{2}}(k)\rvert \leq \frac{\varepsilon(R^{2})}{R^{d}}$$
thus for all \(z\) with
\(\lVert z \rVert < \lambda R\), we have

\begin{equation*}{
\P_{z}(S_{R^{2}} \in B(0,R)) =  \sum_{k \in B(-z,R)}\P_{0}(S_{R^{2}}=k) \geq  \sum_{k \in B(-z,R)} \left( \frac{c_{1}}{R^{d}}-\frac{\varepsilon(R^{2})}{R^{d}}  \right) \geq  c_{2}(c_{1}-\varepsilon(R^{2})),}
\end{equation*}
which concludes the proof of \eqref{eq:HittingProbaFirstLowerBound} .

It remains to prove the second lower bound, namely that there is
\(c \in (0,1)\) such that

\begin{equation}\label{eq:HittingProbaSecondLowerBound}{
\sum_{z: \lVert z \rVert <\lambda R} \sum_{\gamma:x \to z \subset B(0,\lambda R)^{c}} s_{R}(\gamma) \geq  c\times  \P_{x}(H_{B(0,\lambda R)}<+\infty).}
\end{equation}
From the definition of \(s_{R}\), we see that we actually have to
lower bound the following expectation:

\begin{equation}\label{eq:FormulationAsExpectation}{
\frac{\sum_{z: \lVert z \rVert <\lambda R} \sum_{\gamma:x \to z \subset B(0,\lambda R)^{c}} s_{R}(\gamma)}{\P_{x}(H_{B(0,\lambda R)}<+\infty)} = \E_{x}\left( \prod_{i=0}^{H_{B(0,\lambda R)}-1} (1-\widetilde{p}_{R}(S_{i})) \Big\vert H_{B(0,\lambda R)}<+\infty \right).}
\end{equation}
To do so, we set
\(\mathcal{A}_{i} = \{ y \in \Z^{d}: R\times 2^{i} \leq  \lVert y \rVert < R\times 2^{i+1}\}\)
and define
\(T_{i,R}=\E\left( \sum_{k =0}^{H_{B(0,\lambda R)}-1} \1_{S_{k} \in \mathcal{A}_{i}} \right)\)
to be the time spent by \(S\) in the annulus \(\mathcal{A}_{i}\) before
hitting the ball \(B(0,\lambda R)\). By \Cref{lem:UniformBoundAjointVisitingProba}, we see that for
any
\(y \in \mathcal{A}_{i},\ \widetilde{p}_{R}(y)\leq 2^{-i\beta}R^{-2}\),
hence we have

\begin{equation*}{
\prod_{i=0}^{H_{B(0,\lambda R)}-1} (1-\widetilde{p}_{R}(S_{i})) \geq \prod_{i: 2^{i+1}>\lambda} (1- 2^{-i\beta}R^{-2})^{T_{i,R}} \geq 1- \sum_{i: 2^{i+1}>\lambda}  \frac{T_{i,R}}{2^{i\beta}R^{2}},
}\end{equation*}
where we used that
\(1-\prod_{j \in J}(1-p_{j}) \leq  \sum_{j \in J} p_{j}\) for any
collection of weights \((p_{j})_{j \in J} \in [0,1]^{J}\).

It now remains to upper bound
\(\E_{x}(T_{i,R} \vert H_{B(0,\lambda R)} <+\infty)\) to show that this
last sum is small enough. We first get rid of the conditioning, by
noticing that

\begin{equation*}{
\begin{aligned}
\E_{x}(T_{i,R} \vert H_{B(0,\lambda R)} <+\infty) &= \frac{1}{\P_{x}(H_{B(0,\lambda R)} <+\infty)} \sum_{k \in \N} \P_{x}(S_{k} \in \mathcal{A}_{i} \text{ and } k < H_{B(0,\lambda R)} < +\infty  ) \\
& \leq  \frac{1}{\P_{x}(H_{B(0,\lambda R)} <+\infty)} \sum_{k \in \N} \E_{x}(\1_{S_{k} \in \mathcal{A}_{i}} \P_{S_{k}} (H_{B(0,\lambda R)} < +\infty))  \\
&\leq  \frac{\sup_{y \in \mathcal{A}_{i}} \P_{y}(H_{B(0,\lambda R)} < +\infty)}{\P_{x}(H_{B(0,\lambda R)} < +\infty)} \sum_{k \in \N}\P_{x}(S_{k} \in \mathcal{A}_{i}).
\end{aligned}
}\end{equation*}

\(\sum_{k \in \N}\P_{x}(S_{k} \in \mathcal{A}_{i}) = \sum_{y \in \mathcal{A}_{i}} g_{\theta}(y-x)\)
is the expected time spent in \(\mathcal{A}_{i}\) by an unconditioned
walk started at \(x\), and we can bound this quantity with \eqref{eq:GreenEstimates}
as follows. Consider \(i_{0}\) such that \(x \in \mathcal{A}_{i_{0}}\).
\begin{itemize}
\item For all \(i \geq  i_{0}+2\), since
\(\forall y \in \mathcal{A}_{i}, \lVert y-x \rVert \geq  2^{i-1}R\) we
get
\(\sum_{y \in \mathcal{A}_{i}} g_{\theta}(y-x) \leq  C_{1} \times (2^{i}R)^{2}\).
\item For all \(i \leq i_{0}-2\), since
\(\forall y \in \mathcal{A}_{i},\lVert y-x \rVert \geq  \frac{\left\lVert  x  \right\rVert}{2}\),
we get this time
\(\sum_{y \in \mathcal{A}_{i}} g_{\theta}(y-x) \leq  C_{2} \times \frac{(2^{i}R)^{d}}{\lVert x \rVert^{d-2}}\).
\item For \(i \in \{ i_{0}-1,i_{0},i_{0}+1 \}\), we split the sum according
to whether \(\lVert y-x \rVert \leq  2^{i}R\) or not, and we use that
\(\sum_{z:\lVert z \rVert < a} \frac{1}{1 \vee \lVert z \rVert^{d-2}} \preceq a^{2}\)
to get
\(\sum_{y \in \mathcal{A}_{i}} g_{\theta}(y-x) \leq C_{3} \times (2^{i}R)^{2}\).
\end{itemize}

Finally, the remaining pre-factor can easily be bounded with a standard
result on the Newtonian capacity, namely \eqref{eq:ClassicalHittingProbaEstimate}, to get
\(\frac{\sup_{y \in \mathcal{A}_{i}} \P_{y}(H_{B(0,\lambda R)} < +\infty)}{\P_{x}(H_{B(0,\lambda R)} < +\infty)} \leq  C_{4}\times \left( \frac{\left\lVert  x  \right\rVert}{2^{i}R} \right)^{d-2}\)
where \(C_{4}\) does not depend on \(\lambda\) either. Hence we conclude
that there is a constant \(C\) (independent of \(x,R,i\) and
\(\lambda\)) such that

\begin{equation*}{
\E_{x}(T_{i,R} \vert H_{B(0,\lambda R)} <+\infty) \leq  C \times  (2^{i}R)^{2}.
}\end{equation*}
As a consequence,

\begin{equation*}{
\sum_{i: 2^{i+1}>\lambda}  \frac{\E_{x}(T_{i,R} \vert H_{B(0,\lambda R)} <+\infty)}{2^{i\beta}R^{2}} \leq C \times \sum_{i: 2^{i+1}>\lambda} \left( \frac{1}{2^{\beta-2}} \right)^{i}. 
}\end{equation*}

Thus we may choose \(\lambda\) to get the right-hand side \(<1\), and
the expectation in \Cref{eq:FormulationAsExpectation} is then bounded away from \(0\),
uniformly in \(x\) and \(R\).

This concludes the proof of \eqref{eq:HittingProbaSecondLowerBound}. Combined with \eqref{eq:HittingProbaFirstDecompositionLambda} and \eqref{eq:HittingProbaFirstLowerBound}, it finally gives that there is \(c>0\) and \(\lambda>0\)
such that for \(x\) and \(R\) large enough,

\begin{equation*}{
\begin{aligned}
p_{R}(x) &\geq  c\times   \P(H(\Tree) > R^{2})\P_{x}(H_{B(0,\lambda R)}<+\infty) \\
& \geq  c'\times   \P(H(\Tree) > R^{2}) \mathrm{Cap}(B(0,R))g_{\theta}(x),
\end{aligned}
}\end{equation*}
which is the desired lower bound for \Cref{prop:BranchingHittingProbalargeBall}.
\end{proof}

\appendix

\section{On regularly varying functions}\label{annexe-regularly-varying-functions}

\subsection{\texorpdfstring{Offspring attracted to an \(\alpha\)-stable distribution}{Offspring attracted to an \textbackslash alpha-stable distribution}}\label{offspring-attracted-to-an-alpha-stable-distribution}

In this paper, we have to use slowly (and regularly) varying functions because of the assumption that \(\mu\) is in the domain of attraction of an
\(\alpha\)-stable distribution, denoted by $\mu \in \mathrm{dom}(\alpha)$, where \(\alpha \in (1,2]\). Indeed, this assumption means
that given i.i.d. random variables \(X_1, X_2,\ldots\)
\(\mu\)-distributed, there is a slowly varying function \(\ell\) such
that

\begin{equation}\label{eq:StableCLT}{
\frac{\sum_{i=1}^n X_{i} - n}{n^{1/\alpha}\ell(n)} \xrightarrow[n \to \infty]{(d)} Y
}\end{equation}
where the limit is a spectrally positive \(\alpha\)-stable law
characterized by:
\(\E(e^{-\lambda Y})=\exp(\lambda^{\alpha}), \forall~\lambda~\geq~0\).

In the case where \(\mu\) has a finite variance, this assumption holds with \(\ell(n)=\frac{\sigma_{\mu}}{\sqrt{ 2 }}\), however in full generality we may not have an explicit expression for \(\ell\) and we can only rely on the fact that it is a slowly varying function.

We mainly use an alternative characterization of a critical offspring $\mu \in \mathrm{dom}(\alpha)$: \(\mu\) satisfies
\eqref{eq:StableCLT} if and only if there is another slowly varying function
\(L\) such that the generating function \(\gen_{\mu}\) of \(\mu\)
satisfies

\begin{equation*}{
 \gen_{\mu}(1-s)-(1-s) = s^{\alpha}L\left( \frac{1}{s} \right).
}\end{equation*}

See for instance \autocite{BjornbergStefansson2015RandomWalkRandom}. The
slowly varying functions \(\ell\) and \(L\) are different but can be
related: \(L(x) \to +\infty\) if and only if \(\ell(x) \to +\infty\),
and when \(\sigma_{\mu}^2 < +\infty\), one has
\(L(x) \to \frac{\sigma_{\mu}^2}{2}\) and
\(\ell(x) \to \frac{\sigma_{\mu}}{\sqrt{ 2 }}\) as \(x \to +\infty\).

\subsection{Definitions and properties of regularly varying functions}\label{definitions-and-properties}

We give here a brief account of the properties of regularly varying
functions, and refer to
\autocite{BinghamGoldieTeugels1987RegularVariation} for a complete
survey on those functions.

\begin{definition}\label{def:regular}

A measurable function \(f:(0,+\infty)\mapsto (0,+\infty)\) is said to be

\begin{itemize}
\item
  \emph{slowly varying} at \(+\infty\) when

  \begin{equation*}{
  \forall \lambda>0,\ \frac{f(\lambda x)}{f(x)} \xrightarrow[x \to \infty]{} 1;
  }\end{equation*}
\item
  \emph{regularly varying with index \(\beta \in \R\)} (or
  \emph{\(\beta\)-varying}) at \(+\infty\) when

  \begin{equation*}{
  \forall \lambda>0,\ \frac{f(\lambda x)}{f(x)} \xrightarrow[x \to \infty]{} \lambda^{\beta},
  }\end{equation*}

  or equivalently when \(x \mapsto x^{-\beta}f(x)\) is a slowly varying
  function at \(+\infty\).
\end{itemize}

\end{definition}

\begin{remark}\label{rmk:UniformCvThm}
According to the Uniform Convergence Theorem \autocite[Theorem~1.2.1]{BinghamGoldieTeugels1987RegularVariation}, as soon as a function $f$ satisfies the definition above, we have that for all $\lambda_{1},\lambda_{2}>0$, the convergence
 \begin{equation*}{
  \forall \lambda>0,\ \frac{f(\lambda x)}{f(x)} \xrightarrow[x \to \infty]{} \lambda^{\beta},
  }\end{equation*}
actually holds uniformly in $\lambda \in [\lambda_{1},\lambda_{2}]$.
\end{remark}

\begin{remark}

By change of variables, one can define regularly varying functions at
any \(a \in [-\infty,+\infty]\) in a similar way for any function \(f\)
defined on a neighbourhood of \(a\). Here by default a regularly varying
function will be regularly varying at \(+\infty\), and we will only
write down explicitly the limiting point when it is more convenient to
deal with regularly varying functions at \(a \neq +\infty\).

\end{remark}

An obvious example of a \(\beta\)-varying function is
\(x \mapsto x^{\beta}\), and actually any regularly varying function
should be thought as roughly asymptotic to a power function (and any
slowly varying function as roughly constant) since many asymptotic
properties of power functions extend to regularly varying functions. In
this direction, the following result enables to bound regularly varying
functions by power functions.

\begin{proposition}[Potter's bound, \protect{\autocite[Theorem 1.5.6]{BinghamGoldieTeugels1987RegularVariation}}]\label{prop:PotterBounds}

Let \(f\) be a \(\beta\)-varying function. For all \(C>1\) and
\(\varepsilon>0\), there is \(A>0\) such that

\begin{equation*}{
 \forall x,y \geq A,\ \frac{f(x)}{f(y)} \leq C\times\left( \frac{x}{y} \right)^{\beta+\varepsilon} \vee \left( \frac{x}{y} \right)^{\beta-\varepsilon}.
}\end{equation*}

In particular, for all \(\varepsilon>0\) we have
\(x^{\beta-\varepsilon} \ll f(x) \ll x^{\beta+\varepsilon}\).

\end{proposition}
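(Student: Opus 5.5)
Potter's bound is a citation from Bingham–Goldie–Teugels, but here is how I would prove it. The plan is to reduce it to the uniform control of slowly varying functions given by the Representation Theorem. First write $f(x)=x^{\beta}\ell(x)$ with $\ell$ slowly varying. The Uniform Convergence Theorem (\Cref{rmk:UniformCvThm}) yields the Karamata representation: there exist $a>0$, measurable functions $c(\cdot)$ with $c(x)\to c\in(0,\infty)$, and $\eta(\cdot)$ with $\eta(t)\to 0$, such that
\begin{equation*}
\ell(x)=c(x)\exp\left(\int_a^x \frac{\eta(t)}{t}\,\dd t\right)\quad\text{for } x\geq a.
\end{equation*}
The proof of this representation is the main obstacle. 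I would follow the standard route: set $h(u)=\log \ell(e^u)$, which satisfies $h(u+v)-h(u)\to 0$ uniformly for $v\in[0,1]$. Then write $h(u)=\int_u^{u+1}h(v)\,\dd v+\bigl(h(u)-\int_u^{u+1}h(v)\dd v\bigr)$. The bracket tends to $0$. The integral term is differentiable in $u$, with derivative $h(u+1)-h(u)\to 0$, and this derivative plays the role of $\eta(e^u)$.

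Given the representation, fix $C>1$ and $\varepsilon>0$. Choose $A\geq a$ such that, for all $x,y\geq A$, we have $c(x)/c(y)\leq C$ and $\lvert\eta(t)\rvert\leq\varepsilon$ for all $t\geq A$. This is possible because $c(x)\to c>0$ and $\eta\to 0$. For $x,y\geq A$ this gives
\begin{equation*}
\frac{\ell(x)}{\ell(y)}=\frac{c(x)}{c(y)}\exp\left(\int_y^x\frac{\eta(t)}{t}\,\dd t\right)\leq C\exp\bigl(\varepsilon\lvert\log(x/y)\rvert\bigr)=C\left(\frac{x}{y}\right)^{\varepsilon}\vee\left(\frac{x}{y}\right)^{-\varepsilon}.
\end{equation*}
Multiplying by $(x/y)^{\beta}$ gives the claimed bound on $f(x)/f(y)$.

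For the final assertion, fix $y=A$ and let $x\to\infty$ with $\varepsilon/2$ in place of $\varepsilon$. This gives $f(x)\leq C' x^{\beta+\varepsilon/2}=o(x^{\beta+\varepsilon})$. Exchanging the roles of $x$ and $y$ gives $f(x)\geq c'x^{\beta-\varepsilon/2}$, hence $x^{\beta-\varepsilon}=o(f(x))$.
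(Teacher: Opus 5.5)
Your argument is correct. The paper gives no proof of its own: it quotes Bingham--Goldie--Teugels, who derive Theorem~1.5.6 from the Representation Theorem (their Theorem~1.3.1), which they obtain from the Uniform Convergence Theorem along the lines you sketch, so you have reproduced the standard route. One small tightening: the integral term needs no differentiability, because $\int_u^{u+1}h(v)\,\dd v=\int_{u_0}^{u_0+1}h(v)\,\dd v+\int_{u_0}^{u}\bigl(h(t+1)-h(t)\bigr)\dd t$ holds exactly. The local boundedness of $h$ on $[u_0,\infty)$, which these integrals require, follows from the same uniform convergence $h(u+v)-h(u)\to0$.
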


\begin{remark}

Note the following useful consequence: If \(f\) is \(\beta\)-varying
with \(\beta>0\), and if we have \(p_{1}(x)\) and \(p_{2}(x)\) such that
\(p_{1}(x) \to +\infty\) and \(p_{1}(x) \preceq p_{2}(x)\) as
\(x \to +\infty\), then we also have \(f(p_{1}(x)) \preceq f(p_{2}(x))\).

\end{remark}

In the same direction, a regularly varying function with non-zero index
may not be a monotone function but can always be approximated by one:

\begin{proposition}[Monotone equivalent, \protect{\autocite[Theorem 1.5.3]{BinghamGoldieTeugels1987RegularVariation}}]\label{prop:MonotoneEquivalent}

Let \(f\) be a \(\beta\)-varying function. If \(\beta \neq  0\) then
there is a monotone function \(\phi\) such that
\(f(x) \sim_{x \to +\infty} \phi(x)\). Note that \(\phi\) must be
non-decreasing when \(\beta>0\) and non-increasing when \(\beta<0\).

\end{proposition}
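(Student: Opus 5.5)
The statement is the monotone-equivalent result \Cref{prop:MonotoneEquivalent}: for a $\beta$-varying $f$ with $\beta\neq 0$, there is a monotone $\phi$ with $f\sim\phi$. I treat $\beta>0$ and get $\beta<0$ by applying that case to $1/f$, which is $(-\beta)$-varying. Indeed, if $1/f\sim\psi$ with $\psi$ non-decreasing, then $\phi=1/\psi$ is non-increasing and $f\sim\phi$.

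For $\beta>0$, I take the running supremum
\[
\phi(x)=\sup_{A\le t\le x} f(t)
\]
for a suitable threshold $A$. This is non-decreasing by construction, and $\phi(x)\ge f(x)$ for $x\ge A$. I define $\phi$ arbitrarily but monotonically below $A$, since only the behaviour at $+\infty$ matters. Local boundedness of $f$ on $[A,x]$ for large $A$ follows from the Uniform Convergence Theorem (\Cref{rmk:UniformCvThm}), so $\phi$ is finite.

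It remains to show $\limsup_{x\to\infty}\phi(x)/f(x)\le 1$. Fix $\eta\in(0,1)$ and split the supremum at $t=\eta x$.
\begin{itemize}
\item For $t\in[\eta x,x]$, \Cref{rmk:UniformCvThm} gives $f(t)/f(x)=(t/x)^{\beta}(1+o(1))\le 1+o(1)$ uniformly in $t/x\in[\eta,1]$, because $\beta>0$.
\item For $t\in[A,\eta x]$, Potter's bound (\Cref{prop:PotterBounds}) with some $\varepsilon<\beta$ and constant $2$ gives $f(t)/f(x)\le 2(t/x)^{\beta-\varepsilon}\le 2\eta^{\beta-\varepsilon}$. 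Here $A$ is the Potter threshold, and we use $t\le x$.
\end{itemize}
Hence $\limsup\phi(x)/f(x)\le \max(1,2\eta^{\beta-\varepsilon})$. Choosing $\eta$ small enough that $2\eta^{\beta-\varepsilon}\le 1$ yields $\limsup\le 1$. Combined with $\phi\ge f$, this gives $\phi\sim f$.

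The main obstacle is the uniform control over the whole range $[A,x]$, which requires both ingredients. Uniform convergence alone only handles $t$ comparable to $x$. Potter's bound handles the small-$t$ range, and it is exactly where $\beta>0$ is needed to make $2\eta^{\beta-\varepsilon}$ small. For $\beta=0$ the argument fails, consistent with the hypothesis $\beta\neq0$.
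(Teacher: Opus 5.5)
Your proof is correct and is essentially the standard argument behind Bingham--Goldie--Teugels' Theorem 1.5.3, which the paper cites without proof. For $\beta>0$ that argument takes the running supremum of $f$ and uses $f(\lambda x)/f(x)\to\lambda^{\beta}$ uniformly in $\lambda\in(0,1]$, which is what your split into $[\eta x,x]$ (uniform convergence) and $[A,\eta x]$ (Potter's bound with $\varepsilon<\beta$) establishes, and your reduction of $\beta<0$ via $1/f$ gives the running-infimum version $\phi(x)=\inf_{A\le t\le x}f(t)$.
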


Given two regularly varying functions \(f\) and \(g\) such that
\(g(x) \to +\infty\), \(f \circ g\) is clearly regularly varying at
\(+\infty\) with
\(\text{index}(f \circ g) = \text{index}(f) \times\text{index}(g)\).
This stability by composition also comes with the existence of
(asymptotic) inverse function for regularly varying function with
positive index.

\begin{proposition}[Asymptotic inversion, \protect{\autocite[Theorem 1.5.12]{BinghamGoldieTeugels1987RegularVariation}}]\label{prop:asymptoticInversion}

Let \(f\) be a regularly varying function with index \(\beta >0\), then
there is a \(\frac{1}{\beta}\)-varying function \(g\) such that

\begin{equation*}{
 f(g(x)) \sim g(f(x)) \sim x \ \text{ as } x \to +\infty.
}\end{equation*}
Moreover, \(g\) is unique up to asymptotic equivalence.

\end{proposition}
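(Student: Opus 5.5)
The statement to prove is the asymptotic inversion result, Proposition~\ref{prop:asymptoticInversion}. I would reduce it to the monotone case and use a generalized inverse.

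\textbf{Step 1: replace $f$ by a monotone, continuous version.} Since $f$ is $\beta$-varying with $\beta>0$, Proposition~\ref{prop:MonotoneEquivalent} gives a non-decreasing $\phi$ with $f\sim\phi$. I would then smooth $\phi$, for instance by taking $\psi(x)=\int_1^x \phi(t)\,\dd t/t$ for large $x$. Karamata's theorem gives $\psi\sim\phi/\beta$, so $\beta\psi$ is continuous, strictly increasing, tends to $+\infty$, and is asymptotic to $f$.

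\textbf{Step 2: define $g$ and show it is $\frac1\beta$-varying.} I would let $g$ be the inverse of $\tilde f=\beta\psi$ on a neighbourhood of $+\infty$, so that $\tilde f(g(x))=x$ exactly. To show $g$ is $\frac1\beta$-varying, fix $\lambda>0$ and set $u=g(x)$, $v=g(\lambda x)$. Then $\tilde f(v)/\tilde f(u)=\lambda$. Suppose $v/u$ had a subsequential limit $\mu\neq\lambda^{1/\beta}$ in $[0,\infty]$. The uniform convergence theorem (Remark~\ref{rmk:UniformCvThm}) together with monotonicity would force $\tilde f(v)/\tilde f(u)\to\mu^\beta\neq\lambda$, a contradiction. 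The cases $\mu\in\{0,\infty\}$ are handled by monotonicity and comparison with a fixed ratio. Hence $v/u\to\lambda^{1/\beta}$.

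\textbf{Step 3: transfer back to $f$.} Since $f\sim\tilde f$ and $g(x)\to\infty$, we get $f(g(x))\sim\tilde f(g(x))=x$. For the other order, write $x=g(\tilde f(x))$ and $f(x)=\tilde f(x)(1+o(1))$. Because $g$ is regularly varying, the uniform convergence theorem gives $g(f(x))\sim g(\tilde f(x))=x$.

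\textbf{Step 4: uniqueness.} If $g_1$ is another such function, then $f(g_1(x))\sim x\sim f(g(x))$. Let $h$ be the $\frac1\beta$-varying inverse built above for $f$. Applying $h$ and using that $h$ preserves asymptotic equivalence (again by uniform convergence), we get $g_1(x)\sim h(f(g_1(x)))$. Since $g_1(x)\to\infty$, the relation $h(f(y))\sim y$ applies, and it yields $g_1\sim h\sim g$.

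\textbf{Main obstacle.} The delicate point is Step 2: ruling out the degenerate subsequential limits $0$ or $\infty$ for $g(\lambda x)/g(x)$. I would handle it by monotonicity. If $v/u\ge K$ along a subsequence, then $\tilde f(v)/\tilde f(u)\ge \tilde f(Ku)/\tilde f(u)\to K^\beta$, which is impossible for $K^\beta>\lambda$; the case $v/u\to0$ is symmetric.
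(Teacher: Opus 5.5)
The paper gives no proof of this statement: it is imported verbatim from \autocite[Theorem~1.5.12]{BinghamGoldieTeugels1987RegularVariation} and used only as a black box, to rewrite \Cref{Assum:MomentAssumJumpMarzouk} as a tail bound. Your argument is a correct, self-contained proof, but it follows a different route from the standard textbook proof.

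\emph{Comparison of routes.} The standard route takes $g=f^{\leftarrow}$, the generalised inverse $f^{\leftarrow}(x)=\inf\{y: f(y)>x\}$ of $f$ itself; this is why Bingham--Goldie--Teugels add that $f^{\leftarrow}$ is one version of $g$. It then obtains $f(f^{\leftarrow}(x))\sim x$ directly from the uniform convergence theorem, bracketing $f^{\leftarrow}(x)$ between points where $f\le x$ and nearby points where $f>x$. You instead first pass to a continuous, strictly increasing equivalent $\tilde f$. The inverse is then exact, $\tilde f(g(x))=x$ holds identically, and Steps~2--3 become clean. The price is an appeal to Karamata's theorem and the extra transfer from $\tilde f$ back to $f$ in Step~3. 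Your treatment of the degenerate limits $0$ and $\infty$ in Step~2 is correct.

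\emph{Possible simplifications.} First, the detour through \Cref{prop:MonotoneEquivalent} is unnecessary. Since $f>0$ is measurable and locally bounded on some $[A,+\infty)$, the function $x\mapsto\int_A^x f(t)\,\dd t/t$ is already continuous and strictly increasing. \Cref{thm:Karamata}, applied with the slowly varying function $t^{-\beta}f(t)$, gives that it is $\sim f(x)/\beta$.

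Second, once $\tilde f$ is monotone, the case $\mu\in(0,\infty)$ in Step~2 needs no uniform convergence. Use the same comparison as for the degenerate limits: if, say, $\mu>\lambda^{1/\beta}$, take $\mu'$ strictly between them. Then eventually
\[
\lambda=\tilde f(v)/\tilde f(u)\ \ge\ \tilde f(\mu' u)/\tilde f(u)\ \to\ \mu'^{\beta}>\lambda .
\]
This in fact proves $g(\lambda x)/g(x)\to\lambda^{1/\beta}$ directly, without subsequences.

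\emph{Step 4 as written.} The chain is a bit tangled. Applying $h$ to $f(g_1(x))\sim x$ gives $h(f(g_1(x)))\sim h(x)$. Separately, $h(f(y))\sim y$ at $y=g_1(x)$, which tends to $+\infty$ because $g_1$ is $\frac1\beta$-varying, gives $h(f(g_1(x)))\sim g_1(x)$. Combining the two yields $g_1\sim h$.
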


An asymptotic inverse of a function \(f\) can be expressed by means
of the generalized inverse of monotone functions, but also by means of
the notion of conjugate for slowly varying functions, see
\autocite{BinghamGoldieTeugels1987RegularVariation} for more details on
this.

Finally, the following results show that regularly varying functions can
be integrated, and sometimes differentiated, just as power functions.

\begin{theorem}[Karamata's theorem, direct half, \protect{\autocite[section 1.6]{BinghamGoldieTeugels1987RegularVariation}}]\label{thm:Karamata}

Let \(\ell\) be a slowly varying function. There is \(A>0\) such that
\(\ell\) is locally bounded on \([A,+\infty)\), hence we can integrate
and we have the following estimates:

\begin{itemize}
\item
  For \(\beta > -1\),

  \begin{equation*}{
  \int_{A}^x t^{\beta}\ell(t)\dd t \sim_{x \to +\infty} \frac{x^{\beta+1}}{\beta+1} \ell(x).
  }\end{equation*}
\item
  For \(\beta < -1\),

  \begin{equation*}{
  \int_{x}^{+\infty} t^{\beta}\ell(t) \dd t \sim_{x \to +\infty} \frac{x^{\beta+1}}{-\beta-1}\ell(x).
  }\end{equation*}
\item
  For \(\beta=-1\), the function
  \(x \mapsto \int_{A}^x \frac{\ell(t)}{t}\dd t\) is slowly varying and
  such that

  \begin{equation*}{
  \frac{1}{\ell(x)}\int_{A}^x \frac{\ell(t)}{t}\dd t \xrightarrow[x \to +\infty]{} 0. 
  }\end{equation*}

  Moreover if \(\int_{A}^{+\infty} \frac{\ell(t)}{t}\dd t <+\infty\)
  then the function
  \(x \mapsto \int_{x}^{+\infty} \frac{\ell(t)}{t}\dd t\) is also slowly
  varying and such that

  \begin{equation*}{
  \frac{1}{\ell(x)}\int_{x}^{+\infty} \frac{\ell(t)}{t}\dd t \xrightarrow[x \to +\infty]{} +\infty.
  }\end{equation*}
\end{itemize}

\end{theorem}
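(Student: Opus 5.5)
This is Karamata's theorem (direct half) for a slowly varying $\ell$. I would prove it from the Uniform Convergence Theorem (\Cref{rmk:UniformCvThm}) and Potter's bound (\Cref{prop:PotterBounds}). The first step is local boundedness. By uniform convergence on $[1,2]$, there is $A$ such that $\ell(\lambda t)\le 2\ell(t)$ for all $t\ge A$ and $\lambda\in[1,2]$. Hence $\ell$ is bounded on each $[2^kA,2^{k+1}A]$ by $2^{k+1}\ell(A)$, so it is locally bounded on $[A,\infty)$; measurability then makes the integrals meaningful.

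For $\beta>-1$, I substitute $t=xu$:
\[
\frac{1}{x^{\beta+1}\ell(x)}\int_A^x t^\beta\ell(t)\,\dd t=\int_{A/x}^1 u^\beta\frac{\ell(xu)}{\ell(x)}\,\dd u .
\]
The integrand converges pointwise to $u^\beta$ as $x\to\infty$. Split the range at a fixed small $\eta>0$:
\begin{itemize}
\item On $[\eta,1]$, uniform convergence gives the limit $\int_\eta^1u^\beta\,\dd u$.
\item On $[A'/x,\eta]$, with $A'$ a Potter threshold, Potter's bound gives $\ell(xu)/\ell(x)\le C u^{-\varepsilon}$ with $\varepsilon<\beta+1$. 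This part is therefore at most $C\int_0^\eta u^{\beta-\varepsilon}\,\dd u$, which is small as $\eta\to0$.
\item The piece $[A,A']$ contributes $O(1)/(x^{\beta+1}\ell(x))\to0$, because $x^{\beta+1}\ell(x)\to\infty$ by Potter.
\end{itemize}
Letting $\eta\to0$ gives the limit $1/(\beta+1)$. The case $\beta<-1$ is identical on $[1,\infty)$: use the Potter bound $u^{\varepsilon}$ with $\beta+\varepsilon<-1$ to dominate the tail $[M,\infty)$, and uniform convergence on $[1,M]$.

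For $\beta=-1$, set $I(x)=\int_A^x\ell(t)/t\,\dd t$. I would first show $I(x)/\ell(x)\to\infty$. After the same substitution, $\int_{\eta}^{1}\ell(xu)/(u\,\ell(x))\,\dd u\to\log(1/\eta)$, and this is arbitrarily large. Note that the stated display for this case should read $\ell(x)/I(x)\to0$. Slow variation of $I$ then follows from
\[
I(\lambda x)-I(x)=\int_x^{\lambda x}\frac{\ell(t)}{t}\,\dd t\sim\ell(x)\log\lambda=o(I(x)),
\]
where the equivalence again uses uniform convergence.

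For the tail version, assume $J(x)=\int_x^\infty\ell(t)/t\,\dd t<\infty$. Then $J(x)-J(\lambda x)\sim\ell(x)\log\lambda$ for $\lambda>1$. Summing this over dyadic blocks $[2^kx,2^{k+1}x]$ with $k\le K$ gives $J(x)\ge\sum_{k\le K}\ell(2^kx)\log2\,(1-o(1))\sim K\ell(x)\log 2$, so $J(x)/\ell(x)\to\infty$. Consequently $J(\lambda x)/J(x)=1-O(\ell(x))/J(x)\to1$, i.e. $J$ is slowly varying.

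The main obstacle is the domination near the singular endpoint ($u\to0$ when $\beta>-1$, $u\to\infty$ when $\beta<-1$). It is handled by Potter's bound with $\varepsilon$ chosen strictly inside the gap $|\beta+1|$, together with the separate treatment of the compact piece below the Potter threshold, which is negligible because $x^{\beta+1}\ell(x)$ diverges.
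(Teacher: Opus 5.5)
Your proof is correct. It is the standard argument from Bingham--Goldie--Teugels, which the paper cites instead of proving: the uniform convergence theorem gives local boundedness and handles compact ranges of $u$, Potter's bound with $\varepsilon<\lvert\beta+1\rvert$ dominates the integrand near the singular endpoint, and for $\beta=-1$ you use $\int_x^{\lambda x}\ell(t)t^{-1}\,\dd t\sim\ell(x)\log\lambda$. You are also right that the first $\beta=-1$ display is misstated, since already for $\ell\equiv 1$ one gets $\int_A^x t^{-1}\,\dd t=\log(x/A)$, so the correct conclusion is $\ell(x)^{-1}\int_A^x \ell(t)t^{-1}\,\dd t\to+\infty$, which is what you prove (the paper's applications only use the cases $\beta\neq-1$ and the correctly stated tail version).
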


\begin{theorem}[Monotone Density Theorem, \protect{\autocite[Theorem 1.7.2]{BinghamGoldieTeugels1987RegularVariation}}]\label{thm:MonotoneDensity}

Let \(f\) be a positive measurable function, locally bounded on
\([A,+\infty)\). Assume that \(f\) is ultimately monotone.

\begin{itemize}
\item
  If there is \(C>0\), \(\beta>0\) and a slowly varying function
  \(\ell\) such that

  \begin{equation*}{
  \int_{A}^{x}f(t)\dd t \sim_{x \to +\infty} C x^{\beta} \ell(x),
   }\end{equation*}

  then \(f(x) \sim_{x \to +\infty} \beta C x^{\beta-1} \ell(x)\).
\item
  If \(\int_{A}^{+\infty}f(t)\dd t < +\infty\) and in addition there is
  \(C>0\), \(\beta < 0\) and a slowly varying function \(\ell\) such
  that

  \begin{equation*}{
  \int_{x}^{+\infty} f(t)\dd t \sim_{x \to +\infty} C x^{\beta} \ell(x),
  }\end{equation*}

  then \(f(x) \sim_{x \to +\infty} -\beta C x^{\beta-1} \ell(x)\).
\end{itemize}

\end{theorem}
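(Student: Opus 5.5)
We prove both items by the classical sandwich argument: compare $f(x)$ with difference quotients of its integral over short intervals $[x,ax]$, using the monotonicity of $f$, and then let $a\to1$. Only the definition of slow variation is needed, namely $\ell(ax)/\ell(x)\to1$ for each fixed $a>0$.

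For the first item, set $F(x)=\int_A^x f(t)\,\dd t$, and assume $x$ is large enough that $f$ is monotone on $[bx,ax]$ for the $a>1>b>0$ considered.

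Suppose first that $f$ is ultimately non-decreasing. For $a>1$,
\[
(a-1)x f(x)\le F(ax)-F(x).
\]
Dividing by $(a-1)x\cdot Cx^{\beta-1}\ell(x)$ and using $F(ax)\sim Ca^\beta x^\beta\ell(ax)$ together with $\ell(ax)/\ell(x)\to1$ gives
\[
\varlimsup_{x\to\infty}\frac{f(x)}{Cx^{\beta-1}\ell(x)}\le \frac{a^\beta-1}{a-1}.
\]
Symmetrically, for $b\in(0,1)$ we have $F(x)-F(bx)\le(1-b)xf(x)$, which yields
\[
\varliminf_{x\to\infty}\frac{f(x)}{Cx^{\beta-1}\ell(x)}\ge \frac{1-b^\beta}{1-b}.
\]
Letting $a\downarrow1$ and $b\uparrow1$, both bounds tend to $\beta$, which proves $f(x)\sim\beta Cx^{\beta-1}\ell(x)$.

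If $f$ is ultimately non-increasing, the same inequalities hold with the roles of the two intervals swapped: $F(ax)-F(x)\le(a-1)xf(x)$ gives the lower bound, and $F(x)-F(bx)\ge(1-b)xf(x)$ gives the upper bound. The limits as $a,b\to1$ are identical.

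For the second item, set $\bar F(x)=\int_x^{+\infty}f(t)\,\dd t\sim Cx^\beta\ell(x)$ with $\beta<0$. Here $\int_x^{ax}f=\bar F(x)-\bar F(ax)$, which is asymptotically $Cx^\beta\ell(x)(1-a^\beta)$. The same monotone sandwich then gives limits $\frac{1-a^\beta}{a-1}$ and $\frac{b^\beta-1}{1-b}$. Both tend to $-\beta$ as $a,b\to1$, so $f(x)\sim-\beta Cx^{\beta-1}\ell(x)$.

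There is no genuine obstacle. The only points requiring care are checking the direction of each inequality in the two monotone cases, and taking the limit in $x$ before sending $a,b\to1$, so that only pointwise slow variation of $\ell$ at the fixed ratios $a,b$ is used.
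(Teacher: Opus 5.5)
Your proof is correct and is essentially the standard proof of Bingham--Goldie--Teugels (Theorem 1.7.2). The paper only cites that result and does not reproduce it. Monotonicity sandwiches $f(x)$ between difference quotients of the integral over $[x,ax]$ and $[bx,x]$; you let $x\to\infty$ at a fixed ratio, which needs only pointwise slow variation of $\ell$, and then let $a,b\to 1$. The inequality directions in both monotone cases are right, and the resulting bounds $\frac{a^\beta-1}{a-1}$, $\frac{1-b^\beta}{1-b}$ (first item) and $\frac{1-a^\beta}{a-1}$, $\frac{b^\beta-1}{1-b}$ (second item) are right as well.
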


\subsection{Convolution of regularly varying functions}\label{convolution-of-regularly-varying-functions}

\begin{lemma}\label{lem:ConvolutionRegular}

Consider \(f,g: \Z^d \to \R_+\) and \(\lVert \cdot \rVert\) an Euclidian
norm on \(\R^d\). Assume that
\(f(x)\sim \Vert x\Vert^{-a} \tilde f(\Vert x\Vert)\) and
\(g(x)\sim \Vert x\Vert^{-b} \tilde g(\Vert x\Vert)\) when
\(\Vert x\Vert\to+\infty\), where \(\tilde f, \tilde g\) are slowly
varying functions and \(a,b >0\) satisfy \(\max(a,b) < d < a+b\). Then
there is \(C=C(d,a,b)\) such that

\begin{equation*}{
f*g(x) \sim_{x\to \infty} C\Vert x\Vert^df(x)g(x).
}\end{equation*}

\end{lemma}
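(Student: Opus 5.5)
The plan is to prove \Cref{lem:ConvolutionRegular} by a Riemann-sum argument after rescaling by $\lVert x \rVert$, with the divergent contributions at $0$ and at $x$ controlled by Karamata's \Cref{thm:Karamata}. Write $N=\lVert x\rVert$ and $u=x/N$, a unit vector for $\lVert\cdot\rVert$. Then
\[(f*g)(x)=\sum_{z\in\Z^d} f(z)g(x-z).\]
Split $\Z^d$ into three regions: $D_1=B(0,\eta N)$, $D_2=B(x,\eta N)$, and the bulk $D_3=\{z:\ \eta N\le \lVert z\rVert \text{ and } \lVert x-z\rVert\ge \eta N\}$, for a small $\eta>0$. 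The expected constant is $C=\int_{\R^d}\lVert w\rVert^{-a}\lVert u-w\rVert^{-b}\,\dd w$, which is independent of the unit vector $u$ by rotation invariance after the linear change of variables making $\lVert\cdot\rVert$ Euclidean. This integral is finite exactly because $a<d$ (integrability near $0$), $b<d$ (near $u$) and $a+b>d$ (at infinity).

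For the bulk $D_3$, I further cut off at $\lVert z\rVert\le M N$ for large $M$. On the compact set $\{w:\ \eta\le\lVert w\rVert\le M,\ \lVert u-w\rVert\ge\eta\}$, the Uniform Convergence Theorem (\Cref{rmk:UniformCvThm}) gives, with $z=Nw$, uniformly in $w$,
\[f(Nw)\sim N^{-a}\lVert w\rVert^{-a}\tilde f(N), \qquad g(x-Nw)\sim N^{-b}\lVert u-w\rVert^{-b}\tilde g(N).\]
The sum over lattice points is then a Riemann sum with mesh $1/N$. It equals $N^{d}\,N^{-a-b}\tilde f(N)\tilde g(N)$ times the integral over this compact set, up to a factor $1+o(1)$, and $N^{d}N^{-a-b}\tilde f(N)\tilde g(N)\sim N^d f(x)g(x)$. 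The far part $\lVert z\rVert> MN$ is treated with Potter's bound (\Cref{prop:PotterBounds}), using $\lVert x-z\rVert\asymp\lVert z\rVert$ there. It is bounded by $\sum_{\lVert z\rVert>MN}\lVert z\rVert^{-a-b}\tilde f\tilde g(\lVert z\rVert)$, which by Karamata (index $d-1-a-b<-1$) is $\preceq M^{d-a-b}N^{d-a-b}\tilde f(N)\tilde g(N)$. This is small relative to $N^d f(x)g(x)$ once $M$ is large.

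For $D_1$, use $g(x-z)\le (1+o(1))(1-\eta)^{-b}N^{-b}\tilde g(N)$, uniformly by the Uniform Convergence Theorem. By Karamata with index $d-1-a>-1$,
\[\sum_{\lVert z\rVert<\eta N} f(z)\asymp (\eta N)^{d-a}\tilde f(\eta N).\]
Since $\tilde f(\eta N)\sim\tilde f(N)$, this contribution is $O(\eta^{d-a})N^df(x)g(x)$. The finitely many small $z$, where the asymptotic form of $f$ is not yet valid, only add an $O(1)$ term to $\sum f$, which is negligible because $d-a>0$. The region $D_2$ is handled symmetrically and gives $O(\eta^{d-b})$. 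Finally, let $N\to\infty$, then $M\to\infty$ and $\eta\to0$; the truncated integrals converge to $C$ by monotone convergence.

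The main obstacle is making the Riemann-sum step uniform. The regularly varying functions are evaluated at arguments ranging over $[\eta N, MN]$, so I would invoke the Uniform Convergence Theorem on $\lambda\in[\eta,M]$ for both $\tilde f$ and $\tilde g$. Together with the equivalences $f(z)\sim\lVert z\rVert^{-a}\tilde f(\lVert z\rVert)$, which hold uniformly once $\lVert z\rVert\ge \eta N$ is large, this turns the lattice sum into $N^d$ times an integral of a continuous function over a compact set. Everything else is a routine application of Karamata's theorem.
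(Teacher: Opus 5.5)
Your proposal is correct and follows the same strategy as the paper: rescale by $\lVert x\rVert$ so that the bulk of $f*g(x)$ becomes $\lVert x\rVert^{d}f(x)g(x)$ times an integral converging to $C=\int_{\R^d}\lVert w\rVert^{-a}\lVert u-w\rVert^{-b}\,\dd w$, and use Karamata's theorem to show that the neighbourhoods of $0$ and $x$ are negligible. The only difference is how the limit is taken: the paper extends $f,g$ by constancy so the sum is exactly an integral, removes balls of radius $\lVert x\rVert^{1/2}$, and handles everything else in one step by dominated convergence with a Potter-type integrable majorant, whereas you remove balls of radius $\eta\lVert x\rVert$ and the exterior of $B(0,M\lVert x\rVert)$, use the Uniform Convergence Theorem and a Riemann sum on the compact remainder, and then send $\eta\to0$, $M\to\infty$ (you should state that this Riemann sum converges uniformly in the direction $u=x/\lVert x\rVert$, since the compact set depends on $u$).
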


\begin{proof}
Let us first extend \(f,g\) to \(\R^{d}\) by constancy (say,
\(f(x_{1},\dots,x_{d})=f(\lfloor x_{1} \rfloor, \dots \lfloor x_{d} \rfloor)\)).
This and the assumption \(d < a+b\) enable to write

\begin{equation}\label{eq:IntegralFormulation}{
\forall x \in \R^{d}, f*g(x) = \sum_{z \in \Z^d} f(z)g(x-z) = \int_{\R^d} f(y)g(x-y)\dd y < +\infty.
}\end{equation}

We then choose \(u \in \R^{d}\) such that \(\lVert u \rVert=1\) and, for
a given \(x\), \(\theta_{x}\) a rotation (for \(\lVert \cdot{} \rVert\))
mapping \(u\) to \(\frac{x}{\lVert x \rVert}\). We also set, for any
\(v \in \R^{d}\) and \(r>0\), \(V(v,r)=B(0,r)\cup B(v,r)\).

The change of variables \(y = \Vert x\Vert\theta_{x}(z)\) yields

\begin{equation}\label{eq:ChangeVariables}{
\begin{aligned}
\int_{\R^d \setminus V(x,\sqrt{ \lVert x \rVert  })}f(y)g(x-y)\dd y = \Vert x\Vert^{d}f(x)g(x) \int_{\R^d\setminus V\left( u,\lVert x \rVert^{-1/2} \right)} \frac{f\bigl(\Vert x\Vert\theta_{x}(z)\bigr)}{f(x)} \frac{g\bigl( \Vert x\Vert\theta_{x}(u-z) \bigr)}{g(x)}\dd z.
\end{aligned}
}\end{equation}

Observe that, for any choice of \(x \mapsto \theta_{x}\),
\(\frac{f\bigl(\Vert x\Vert\theta_{x}(z)\bigr)}{f(x)} \frac{g\bigl(\Vert x\Vert\theta_{x}(u-z) \bigr)}{g(x)} \to \frac{1}{\Vert z\Vert^a} \frac{1}{\Vert u-z\Vert^b}\)
as \(\lVert x \rVert \to +\infty\). Moreover, one can fix
\(\varepsilon>0\) such that \(\max(a,b)+\epsilon <d\) and
\(a+b -2\varepsilon > d\), and then, since
\(\lVert y \rVert \geq \lVert x \rVert^{1/2}\) ensures that
\(\frac{f(y)}{\lVert y \rVert^{a}\tilde{f}(y)}\) is bounded away from
\(0\) and \(+\infty\) for large \(x\), we can apply Potter's bound (\Cref{prop:PotterBounds}) to
get that \(\forall z \not\in V(u,\Vert x\Vert^{-1/2})\),

\begin{equation*}{
\frac{f(\Vert x\Vert\theta_{x}(z))}{f(x)} \leq c \max(\lVert z \rVert^{-(a+\varepsilon)},\lVert z \rVert^{-(a-\varepsilon)}),
}\end{equation*}
and in the same way for large \(x\) and
\(z \not\in V(u,\Vert x\Vert^{-1/2})\) we have

\begin{equation*}{
\frac{g\bigl( \Vert x\Vert\theta_{x}(u-z) \bigr)}{g(x)} \leq c' \max(\lVert u-z \rVert ^{-(b+\varepsilon)},\lVert u-z \rVert ^{-(b-\varepsilon)}).
}\end{equation*}

This means that for \(x\) large enough, for all
\(z \in V(u,\lVert x \rVert^{-1/2})\) we have
\(\frac{f\bigl(\Vert x\Vert\theta_{x}(z)\bigr)}{f(x)} \frac{g\bigl(\Vert x\Vert\theta_{x}(u-z) \bigr)}{g(x)} \leq  c''\max(\lVert z \rVert^{-(a+\varepsilon)},\lVert z \rVert^{-(a-\varepsilon)})\times \max(\lVert u-z \rVert ^{-(b+\varepsilon)},\lVert u-z \rVert ^{-(b-\varepsilon)})\)
and by choice of \(\varepsilon\) we can apply Lebesgue's dominated
convergence theorem in \Cref{eq:ChangeVariables} to get

\begin{equation}\label{eq:DominatedConvergence}{
\int_{\R^d \setminus V(x,\Vert x\Vert ^{1/2})}f(y)g(x-y)\dd y \sim_{\Vert x\Vert \to + \infty} C(d,a,b)\Vert x\Vert^{d}f(x)g(x),
}\end{equation}
where
\(C(d,a,b)=\int_{\R^d} \frac{1}{\Vert z\Vert^a} \frac{1}{\Vert u-z\Vert^b}\dd z\)
does not depend on our choice of a unitary vector \(u\) (indeed this
integral is left unchanged by rotations).

To conclude, it simply remains to control what happens on
\(B(0,\lVert x \rVert^{1/2})\) and \(B(x,\Vert x\Vert^{1/2})\). This is
done by using Karamata's \Cref{thm:Karamata} and the uniform convergence theorem for
regularly varying functions (see \Cref{rmk:UniformCvThm}). For instance on
\(B(0,\lVert x \rVert^{1/2})\) we have

\begin{equation}\label{eq:Around0}{
\begin{aligned}
\int_{B(0,\Vert x\Vert ^{1/2})} f(y)g(x-y)\dd y 
&\preceq g(x) \int_{B(0,\Vert x\Vert ^{1/2})} f(y)\dd y\\
&\preceq g(x) \int_{0}^{\Vert x\Vert^{1/2}}t^{d-1-a}\tilde{f}(t)\dd t\\
&\preceq g(x)\Vert x\Vert^{(d-a)/2}\tilde{f}(\Vert x\Vert^{1/2} ),
\end{aligned}
}\end{equation}
and since \(d-a>0\), \Cref{eq:Around0} finally gives

\begin{equation}\label{eq:Around0Conclusion}{
\int_{B(0,\Vert x\Vert ^{1/2})} f(y)g(x-y)\dd y  =o \left(\Vert x\Vert ^d g(x)f(x)\right).
}\end{equation}
In the same way, from \(d-b>0\) we get

\begin{equation}\label{eq:AroundxConclusion}{
\int_{B(x,\Vert x\Vert^{1/2})} f(y)g(x-y)\dd y   =o \left(\Vert x\Vert ^d g(x)f(x)\right).
}\end{equation}
Gathering \Cref{eq:DominatedConvergence}, \Cref{eq:Around0Conclusion} and \Cref{eq:AroundxConclusion} proves
\Cref{lem:ConvolutionRegular}
\end{proof}

\printbibliography

\end{document}